\newtheorem{lem}{Lemma}[section]
\newtheorem{cor}[lem]{Corollary}
\newtheorem{prop}[lem]{Proposition}
\newtheorem{thm}[lem]{Theorem}
\newtheorem{Defn}[lem]{Definition}
\newtheorem{Ex}[lem]{Example}
\newtheorem{Question}[lem]{Question}
\newtheorem{Property}[lem]{Property}
\newtheorem{Properties}[lem]{Properties}
\newtheorem{Discussion}[lem]{Remark}
\newtheorem{Construction}[lem]{Construction}
\newtheorem{Notation}[lem]{Notation}
\newtheorem{Fact}[lem]{Fact}
\newtheorem{Notationdefinition}[lem]{Definition/Notation}
\newtheorem{Subprops}{}[lem]
\newtheorem{Para}[lem]{}
\newenvironment{defn}{\begin{Defn}\rm}{\end{Defn}}
\newenvironment{ex}{\begin{Ex}\rm}{\end{Ex}}
\newenvironment{question}{\begin{Question}\rm}{\end{Question}}
\newenvironment{notation}{\begin{Notation}\rm}{\end{Notation}}
\newenvironment{fact}{\begin{Fact}\rm}{\end{Fact}}
\newenvironment{notationdefinition}{\begin{Notationdefinition}\rm}{\end{Notationdefinition}}
\newenvironment{disc}{\begin{Discussion}\rm}{\end{Discussion}}
\newcommand{\cat}[1]{\mathcal{#1}}
\newcommand{\catx}{\cat{X}}
\newcommand{\caty}{\cat{Y}}
\newcommand{\catm}{\cat{M}}
\newcommand{\catv}{\cat{V}}
\newcommand{\catw}{\cat{W}}
\newcommand{\catp}{\cat{P}}
\newcommand{\cati}{\cat{I}}
\newcommand{\cata}{\cat{A}}
\newcommand{\catabel}{\mathcal{A}b}
\newcommand{\catb}{\cat{B}}
\newcommand{\catgi}{\cat{GI}}
\newcommand{\catgp}{\cat{GP}}
\newcommand{\catgic}{\cat{GI}_C}
\newcommand{\catgib}{\cat{GI}_B}
\newcommand{\catgibdc}{\cat{GI}_{\bdc}}
\newcommand{\catgc}{\cat{G}_C}
\newcommand{\catgpc}{\cat{GP}_C}
\newcommand{\catgpb}{\cat{GP}_B}
\newcommand{\catac}{\cat{A}_C}
\newcommand{\catab}{\cat{A}_B}
\newcommand{\catbc}{\cat{B}_C}
\newcommand{\catabdc}{\cat{A}_{\bdc}}
\newcommand{\catbbdc}{\cat{B}_{\bdc}}
\newcommand{\catbb}{\cat{B}_B}
\newcommand{\catic}{\cat{I}_C}
\newcommand{\catibdc}{\cat{I}_{\bdc}}
\newcommand{\catpb}{\cat{P}_B}
\newcommand{\catpc}{\cat{P}_C}
\newcommand{\finrescat}[1]{\operatorname{res}\comp{\cat{#1}}}
\newcommand{\proprescat}[1]{\operatorname{res}\wti{\cat{#1}}}
\newcommand{\finrescatx}{\finrescat{X}}
\newcommand{\finrescaty}{\finrescat{Y}}
\newcommand{\finrescatw}{\finrescat{W}}
\newcommand{\finrescatpcr}{\operatorname{res}\comp{\catpc(R)}}
\newcommand{\finrescatpbr}{\operatorname{res}\comp{\catpb(R)}}
\newcommand{\finrescatgpbr}{\operatorname{res}\comp{\catgpb(R)}}
\newcommand{\propcorescatic}{\operatorname{cores}\wti{\catic}}
\newcommand{\fincorescaticr}{\operatorname{cores}\comp{\catic(R)}}
\newcommand{\fincorescatir}{\operatorname{cores}\comp{\cati(R)}}
\newcommand{\fincorescatibdcr}{\operatorname{cores}\comp{\catibdc(R)}}
\newcommand{\fincorescatgibdcr}{\operatorname{cores}\comp{\catgibdc(R)}}
\newcommand{\finrescatgpr}{\operatorname{res}\comp{\cat{GP}(R)}}
\newcommand{\finrescatpr}{\operatorname{res}\comp{\cat{P}(R)}}
\newcommand{\finrescatgpc}{\operatorname{res}\comp{\catgp_C(R)}}
\newcommand{\proprescatpc}{\operatorname{res}\wti{\catp_C(R)}}
\newcommand{\proprescatp}{\proprescat{P}}
\newcommand{\proprescatx}{\proprescat{X}}
\newcommand{\proprescatw}{\proprescat{W}}
\newcommand{\fincorescat}[1]{\operatorname{cores}\comp{\cat{#1}}}
\newcommand{\propcorescat}[1]{\operatorname{cores}\wti{\cat{#1}}}
\newcommand{\fincorescatx}{\fincorescat{X}}
\newcommand{\fincorescatgir}{\fincorescat{GI(R)}}
\newcommand{\propcorescati}{\propcorescat{I}}
\newcommand{\fincorescaty}{\fincorescat{Y}}
\newcommand{\fincorescatv}{\fincorescat{V}}
\newcommand{\propcorescaty}{\propcorescat{Y}}
\newcommand{\propcorescatv}{\propcorescat{V}}
\newcommand{\catao}{\cata^o}
\newcommand{\catxo}{\catx^o}
\newcommand{\catyo}{\caty^o}
\newcommand{\pd}{\operatorname{pd}}	
\newcommand{\gkdim}[1]{\mathrm{G}_{#1}\text{-}\dim}	
\newcommand{\id}{\operatorname{id}}
\newcommand{\catpd}[1]{\cat{#1}\text{-}\pd}
\newcommand{\xpd}{\catpd{X}}
\newcommand{\xopd}{\catxo\text{-}\pd}
\newcommand{\wpd}{\catpd{W}}
\newcommand{\catid}[1]{\cat{#1}\text{-}\id}
\newcommand{\yid}{\catid{Y}}
\newcommand{\vid}{\catid{V}}
\newcommand{\pbpd}{\catpb\text{-}\pd}
\newcommand{\ibdcid}{\catibdc\text{-}\id}
\newcommand{\depth}{\operatorname{depth}}
\newcommand{\ann}{\operatorname{Ann}}
\newcommand{\grade}{\operatorname{grade}}
\newcommand{\ext}{\operatorname{Ext}}
\newcommand{\HH}{\operatorname{H}}
\newcommand{\Hom}{\operatorname{Hom}}	
\newcommand{\coker}{\operatorname{Coker}}
\newcommand{\tor}{\operatorname{Tor}}
\newcommand{\im}{\operatorname{Im}}
\newcommand{\shift}{\mathsf{\Sigma}}
\newcommand{\cone}{\operatorname{Cone}}
\newcommand{\Ker}{\operatorname{Ker}}
\newcommand{\aext}{\ext_{\cata}}
\newcommand{\ahom}{\Hom_{\cata}}
\newcommand{\aoext}{\ext_{\catao}}
\newcommand{\aohom}{\Hom_{\catao}}
\newcommand{\xaext}{\ext_{\catx\!\cata}}
\newcommand{\ayext}{\ext_{\cata\caty}}
\newcommand{\avext}{\ext_{\cata\catv}}
\newcommand{\waext}{\ext_{\catw \cata}}
\newcommand{\pcext}{\ext_{\catpc}}
\newcommand{\pbext}{\ext_{\catpb}}
\newcommand{\gpcext}{\ext_{\catgpc}}
\newcommand{\icext}{\ext_{\catic}}
\newcommand{\gpbext}{\ext_{\catgpb}}
\newcommand{\gibdcext}{\ext_{\catgibdc}}
\newcommand{\ibdcext}{\ext_{\catibdc}}
\newcommand{\gicext}{\ext_{\catgic}}
\newcommand{\gpext}{\ext_{\catgp}}
\newcommand{\giext}{\ext_{\catgi}}
\newcommand{\wt}{\widetilde}
\newcommand{\comp}[1]{\widehat{#1}}
\newcommand{\ol}{\overline}
\newcommand{\wti}{\widetilde}
\newcommand{\zz}{\mathbb{Z}}
\newcommand{\from}{\leftarrow}
\newcommand{\xra}{\xrightarrow}
\newcommand{\xla}{\xleftarrow}
\newcommand{\xwacomp}{\vartheta_{\catx \catw \cata}}
\newcommand{\ayvcomp}{\vartheta_{\cata \caty \catv}}
\newcommand{\xaacomp}{\varkappa_{\catx \cata}}
\newcommand{\aaycomp}{\varkappa_{\cata\caty}}
\newcommand{\pccomp}{\varkappa_{\catpc}}
\newcommand{\iccomp}{\varkappa_{\catic}}
\newcommand{\ibdccomp}{\varkappa_{\catibdc}}
\newcommand{\bdc}{B^{\dagger_C}}
\renewcommand{\geq}{\geqslant}
\renewcommand{\leq}{\leqslant}
\renewcommand{\ker}{\Ker}
\renewcommand{\hom}{\Hom}
\begin{document}

\bibliographystyle{amsplain}

\author{Sean Sather-Wagstaff}

\address{Sean Sather-Wagstaff, Department of Mathematical Sciences, Kent State University,
  Mathematics and Computer Science Building, Summit Street, Kent OH
  44242, USA}
\email{sather@math.kent.edu}
\urladdr{http://www.math.kent.edu/~sather}

\author{Tirdad Sharif}
\address{Tirdad Sharif, School of Mathematics, Institute for Studies in
Theoretical Physics and Mathematics, P. O. Box 19395-5746, Tehran, Iran}
\email{sharif@ipm.ir}
\urladdr{http://www.ipm.ac.ir/IPM/people/personalinfo.jsp?PeopleCode=IP0400060}
\thanks{TS is supported by a grant from IPM, (No. 83130311).}

\author{Diana White} 
\address{Diana White, Department of Mathematics, University of Nebraska,
   203 Avery Hall, Lincoln, NE, 68588-0130 USA} \email{dwhite@math.unl.edu}
\urladdr{http://www.math.unl.edu/~s-dwhite14/}

\title{Gorenstein cohomology in abelian categories}

\date{\today}


\keywords{abelian category, Auslander class, balance, 
Bass class, Gorenstein homological
dimensions, injective generator, projective cogenerator,
relative cohomology, relative homological algebra,
semi-dualizing, semidualizing}
\subjclass[2000]{Primary 18G10, 18G15, 18G20, 18G25; Secondary 13D02, 13D05, 13D07}

\begin{abstract}
We investigate relative cohomology functors 
on subcategories of abelian categories
via Auslander-Buchweitz approximations
and the resulting strict resolutions.  
We verify that certain comparison maps between these functors are isomorphisms
and introduce a notion of perfection for this context.
Our main theorem is a balance result 
for relative cohomology that simultaneously
recovers theorems of Holm and the current authors as special cases.
\end{abstract}

\maketitle

\section*{Introduction} 

Let $\cata$ be an abelian category equipped with subcategories
$\catw$ and $\catx$ such that $\catx$ is closed under extensions
and $\catw$ is an injective cogenerator for $\catx$.  
(See Section~\ref{sec1} for definitions and Section~\ref{sec8}
for motivating examples from commutative algebra.)
Given an object
$M$ in $\cata$ with finite $\catx$-projective dimension, 
Auslander and Buchweitz's theory of approximations~\cite{auslander:htmcma}
provides a ``strict $\catw\catx$-resolution''  of $M$.  Such a resolution 
enjoys good enough lifting properties to make it unique up to homotopy
equivalence and, as such, yields a well-defined relative cohomology
functor $\xaext^n(M,-)$ for each integer $n$.  
The functors $\ayext^n(-,N)$ are defined dually.

These functors have been investigated by numerous authors,
beginning with the fundamental work of 
Butler and Horrocks~\cite{butler:cer}
and 
Eilenberg and Moore~\cite{eilenberg:frha}.
Our approach to the subject is based on a fusion
of the techniques of 
Avramov and Martsinkovsky~\cite{avramov:aratc},
Enochs and Jenda~\cite{enochs:rha},
and Holm~\cite{holm:gdf}.

The contents of this paper are summarized as follows.
In Section~\ref{sec9} we present a brief study of
the pertinent  properties of strict resolutions.
Sections~\ref{sec5} focuses on 
conditions guaranteeing that natural comparison
maps are isomorphisms.
In Section~\ref{sec3}  we introduce a notion of relative perfection
and establish a duality between certain classes of relatively perfect objects.  

The main theorem of this paper is the following balance result,
contained in Theorem~\ref{balance02}.  It showcases the benefit
of our  approach to studying these functors, as it 
simultaneously encompasses a result of Holm~\cite[(3.6)]{holm:gdf}
and our own result~\cite[(5.7)]{sather:crct};
see Corollary~\ref{holm1} and Remark~\ref{more07}.

\

\noindent
\textbf{Main Theorem.}
\textit{Let $\catx$, $\caty$, $\catw$ and $\catv$ be subcategories of $\cata$.
Assume that $\catx$ and $\caty$ are  closed under extensions,
$\catw$ is an injective cogenerator for $\catx$, 
$\catv$ is a projective generator for $\caty$,
$\catw\perp\caty$ and $\catx\perp\catv$.
Assume further
$\waext^{\geq1}(T,\catv)=0=\avext^{\geq1}(\catw, U)$
for all objects $T$ and $U$ with
$\wpd(T)<\infty$ and $\vid(U)<\infty$.
If $M$ and $N$ are objects of $\cata$ such that
$\xpd(M)<\infty$ and $\yid(N)<\infty$, then
there are isomorphisms
$\xaext^n(M,N)\cong\ayext^n(M,N)$
for all $n\in\zz$.
}

\section{Categories and Resolutions}\label{sec1}

We begin with some notation and terminology for use throughout this paper.

\begin{notationdefinition} \label{notation01a}
Throughout this work
$\cata$ is an abelian category.
We use the term ``subcategory'' to mean a ``full, additive, and essential 
(closed under isomorphisms) 
subcategory.''
Write $\catp=\catp(\cata)$ and $\cati=\cati(\cata)$ 
for the subcategories of projective and injective
objects in $\cata$, respectively.

We fix subcategories $\catx$, $\caty$, $\catw$, and $\catv$  of $\cata$ such that
$\catw$ is a subcategory of $\catx$
and $\catv$ is a subcategory of $\caty$.
For an object $M\in\cata$, write $M\perp\caty$ (resp., $\catx\perp M$)
if $\aext^{\geq1}(M,Y)=0$ for each object  $Y\in\caty$
(resp., if $\aext^{\geq1}(X,M)=0$ for each object  $X\in\catx$).
Write $\catx\perp\caty$ 
if $\aext^{\geq1}(X,\caty)=0$ for each object $X\in\catx$.
We say that $\catw$ is a \emph{cogenerator} for $\catx$ if,
for each object $X\in\catx$, there exists an exact sequence 
$$0\to X\to W\to X'\to 0$$
with $W\in\catw$ and $X'\in\catx$.
The subcategory
$\catw$ is an \emph{injective cogenerator} for $\catx$ if 
$\catw$ is a cogenerator for $\catx$ and $\catx\perp\catw$.
The terms  \emph{generator} and \emph{projective generator} 
are defined dually.
\end{notationdefinition}

\begin{defn} \label{notation07}
An \emph{$\cata$-complex} is a sequence of 
homomorphisms in $\cata$
$$M =\cdots\xra{\partial^M_{n+1}}M_n\xra{\partial^M_n}
M_{n-1}\xra{\partial^M_{n-1}}\cdots$$
such that $ \partial^M_{n}\partial^M_{n+1}=0$ for each integer $n$; the
$n$th \emph{homology object} of $M$ is
$\HH_n(M)=\Ker(\partial^M_{n})/\im(\partial^M_{n+1})$.
We frequently identify objects in $\cata$ with complexes concentrated in degree 0.
For each integer $i$,
the $i$th \emph{suspension} (or \emph{shift}) of
a complex $M$, denoted $\shift^i M$, is the complex with
$(\shift^i M)_n=M_{n-i}$ and $\partial_n^{\shift^i M}=(-1)^i\partial_{n-i}^M$.
The notation $\shift X$ is short for $\shift^1 X$.

A complex $M$ is \emph{$\ahom(\catx,-)$-exact} if the complex
$\ahom(X,M)$ is exact for each object $X$ in $\catx$.  
The term \emph{$\ahom(-,\catx)$-exact} is defined dually.  
\end{defn}

\begin{defn} \label{notation07a}
Let $M,N$ be $\cata$-complexes.
The  Hom-complex $\ahom(M,N)$ is the complex of abelian groups defined as
$\ahom(M,N)_n=\prod_p\ahom(M_p,N_{p+n})$
with  $\partial_n^{\ahom(M,N)}$ given by
$\alpha=\{\alpha_p\}\mapsto \{\partial^{N}_{p+n} \alpha_p-(-1)^n \alpha_{n-1}\partial^M_p\}$.
A \emph{morphism}  $M\to N$
is an element of $\ker(\partial_0^{\ahom(M,N)})$,
and a morphism is \emph{null-homotopic} if 
it is in $\im(\partial_1^{\ahom(M,N)})$.
Two morphisms $\alpha,\alpha'\colon M\to N$
are \emph{homotopic} 
if $\alpha-\alpha'$ is null-homotopic.  The morphism $\alpha$ is a
\emph{homotopy equivalence} if there is a morphism
$\beta\colon N\to M$ such that 
$\beta \alpha $ is homotopic to $\id_{M}$ and
$\alpha\beta$ is homotopic to $\id_{N}$.

A morphism  $\alpha\colon M\to N$
induces homomorphisms 
$\HH_n(\alpha)\colon\HH_n(M)\to\HH_n(N)$, and $\alpha$ is a
\emph{quasiisomorphism} if each $\HH_n(\alpha)$ is bijective.
The \emph{mapping cone} of $\alpha$ is the complex
$\cone(\alpha)$ defined as
$\cone(\alpha)_n=N_n\oplus M_{n-1}$
and
$\partial^{\cone(\alpha)}_n 
= \Bigl(\begin{smallmatrix}\partial_{n}^{N} & \alpha_{n-1} \\ 0 & -\partial_{n-1}^{M}
\end{smallmatrix} \Bigr)$.
The morphism $\alpha$ is a quasiisomorphism if and only if $\cone(\alpha)$ is exact.
\end{defn}

\begin{defn} \label{notation03}
A complex $X$ is \emph{bounded} if $X_n=0$ for $|n|\gg 0$. When
$X_{-n}=0=\HH_n(X)$ for all $n>0$, the natural morphism
$X\to\HH_0(X)\cong M$ is a quasiisomorphism.  In this event, 
the morphism $X\to M$ is an
\emph{$\catx$-resolution} of $M$ if each $X_n$ is  in $\catx$, and
the exact sequence
$$X^+ = \cdots\xra{\partial^X_{2}}X_1
\xra{\partial^X_{1}}X_0\to M\to 0$$ is the \emph{augmented
$\catx$-resolution} of $M$ associated to $X$.
We write ``projective resolution'' in lieu of
``$\catp$-resolution''.
The \emph{$\catx$-projective dimension} of $M$ is the quantity
$$\xpd(M)=\inf\{\sup\{n\geq 0\mid X_n\neq 0\}\mid \text{$X$ is an
$\catx$-resolution of $M$}\}.$$ 
The objects of $\catx$-projective dimension 0 are
exactly the objects of $\catx$.
We let $\finrescatx$ denote
the subcategory of objects $M$ with $\xpd(M)<\infty$.
One checks easily that $\finrescatx$ is additive and contains $\catx$.

The terms \emph{$\caty$-coresolution} and \emph{$\caty$-injective dimension}
are defined dually.  The \emph{augmented 
$\caty$-coresolution} associated to a $\caty$-coresolution $Y$ is denoted $^+Y$,
and the $\caty$-injective dimension of $M$ is denoted $\yid(M)$.
The subcategory of $R$-modules $N$  with $\yid(N)<\infty$ is denoted
$\fincorescaty$; it is additive and contains $\caty$.
\end{defn}

\begin{defn} \label{notation05}
An $\catx$-resolution $X$ is \emph{proper} if
the augmented resolution $X^+$ is $\ahom(\catx,-)$-exact. 
The subcategory of objects  admitting a proper 
$\catx$-resolution is denoted $\proprescatx$.
One checks readily that $\proprescatx$ is additive and contains
$\catx$.
Projective resolutions are $\catp$-proper, and so $\cata$ has enough 
projectives if and only if $\proprescatp=\cata$.

\emph{Proper coresolutions} are defined dually, and we let
$\propcorescaty$ denote
the subcategory of objects of $\cata$ admitting a proper 
$\caty$-coresolution.
Again, $\propcorescaty$ is additive and contains $\caty$ as a subcategory.
Injective coresolutions are always $\cati$-proper, and so $\cata$ has enough 
injectives if and only if $\propcorescati=\cata$. 
\end{defn}

The next lemmata are standard or have standard proofs:
for~\ref{perp03} see~\cite[pf.~of (2.3)]{auslander:htmcma};
for~\ref{gencat01} see~\cite[pf.~of (2.1)]{auslander:htmcma};
for~\ref{rel01} argue as 
in~\cite[(4.3)]{avramov:aratc} 
or~\cite[pf.~of (8.1.3)]{enochs:rha};
and for the ``Horseshoe Lemma'' \ref{horseshoe01} 
see~\cite[(4.5)]{avramov:aratc}
or~\cite[pf.~of (8.2.1)]{enochs:rha}.

\begin{lem} \label{perp03}
Let $0\to M_1\to M_2\to M_3\to 0$ be an exact sequence in $\cata$.
\begin{enumerate}[\quad\rm(a)]
\item \label{perp03item1}
If $M_3\perp\catx$, then $M_1\perp\catx$ if and only if $M_2\perp\catx$.
If $M_1\perp\catx$  and  $M_2\perp\catx$, 
then $M_3\perp\catx$
if and only if the given sequence is  $\ahom(-,\catx)$ exact.
\item \label{perp03item2}
If $\catx\perp M_1$, then $\catx\perp M_2$ if and only if $\catx\perp M_3$.
If $\catx\perp M_2$  and  $\catx\perp M_3$, 
then $\catx\perp M_1$
if and only if the given sequence is  $\ahom(\catx,-)$ exact. \qed
\end{enumerate}
\end{lem}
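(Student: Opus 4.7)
Both parts reduce to careful inspection of the long exact sequence of $\aext$ associated to the given short exact sequence. I will treat part (a) in detail; part (b) is strictly dual, obtained by applying $\ahom(X,-)$ in place of $\ahom(-,X)$ for each $X\in\catx$.

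Fix an object $X\in\catx$. Applying $\ahom(-,X)$ to the short exact sequence $0\to M_1\to M_2\to M_3\to 0$ yields a long exact sequence
\[
0\to\ahom(M_3,X)\to\ahom(M_2,X)\to\ahom(M_1,X)\xra{\delta}\aext^1(M_3,X)\to\cdots
\]
continuing as $\cdots\to\aext^n(M_3,X)\to\aext^n(M_2,X)\to\aext^n(M_1,X)\to\aext^{n+1}(M_3,X)\to\cdots$ for every $n\geq 1$. Both statements in (a) are obtained by reading off this sequence under the appropriate vanishing hypothesis.

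For the first assertion in (a), assume $M_3\perp\catx$, so that $\aext^n(M_3,X)=0$ for every $n\geq1$ and every $X\in\catx$. The long exact sequence then collapses to isomorphisms $\aext^n(M_2,X)\cong\aext^n(M_1,X)$ for every $n\geq1$ and every $X\in\catx$, which immediately gives the claimed equivalence $M_1\perp\catx\iff M_2\perp\catx$. For the second assertion in (a), assume $M_1\perp\catx$ and $M_2\perp\catx$; then the long exact sequence forces $\aext^n(M_3,X)=0$ for every $n\geq2$, leaving only the four-term exact sequence
\[
0\to\ahom(M_3,X)\to\ahom(M_2,X)\to\ahom(M_1,X)\xra{\delta}\aext^1(M_3,X)\to 0.
\]
Hence $\aext^1(M_3,X)=0$ if and only if the map $\ahom(M_2,X)\to\ahom(M_1,X)$ is surjective. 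Quantifying over $X\in\catx$ and invoking left exactness of $\ahom(-,X)$ (which supplies the remaining exactness for free), surjectivity of this map for every $X\in\catx$ is precisely the condition that the original short exact sequence be $\ahom(-,\catx)$-exact.

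I do not anticipate any genuine obstacle: the content is pure bookkeeping with the long exact sequence of $\aext$. The only point requiring attention is the last reconciliation, namely that $\ahom(-,\catx)$-exactness of a short exact sequence is equivalent to surjectivity of the rightmost Hom map for every $X\in\catx$; this follows at once from the definition together with left exactness of $\ahom(-,X)$.
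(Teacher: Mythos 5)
Your proof is correct and is essentially the argument the paper has in mind: the paper gives no proof of its own, citing the standard long exact sequence argument from Auslander--Buchweitz, which is exactly your bookkeeping with the $\aext$ long exact sequence (interpreted as Yoneda Ext, since $\cata$ need not have enough projectives or injectives). Part (b) is indeed dual, via $\ahom(X,-)$.
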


\begin{lem} \label{gencat01}
If $\catx\perp\caty$, then  $\catx\perp\finrescaty$ and $\fincorescatx\perp\caty$.
\qed
\end{lem}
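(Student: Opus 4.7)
The plan is to prove both statements by induction on the length of the relevant resolution or coresolution. By duality, it suffices to focus on the first assertion $\catx\perp\finrescaty$; the second will follow from the analogous argument with arrows reversed.

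Fix $X\in\catx$ and an object $M$ with $\ypd(M)=n<\infty$, and argue by induction on $n$. The base case $n=0$ is immediate: then $M\in\caty$, so $\aext^{\geq 1}(X,M)=0$ by the hypothesis $\catx\perp\caty$. For the inductive step, choose a $\caty$-resolution $Y\to M$ with $Y_i=0$ for $i>n$ and set $K=\ker(Y_0\to M)$. Truncating gives a $\caty$-resolution of $K$ of length at most $n-1$, hence $\ypd(K)\leq n-1$, and the short exact sequence
$$0\to K\to Y_0\to M\to 0$$
yields a long exact sequence
$$\cdots\to\aext^i(X,Y_0)\to\aext^i(X,M)\to\aext^{i+1}(X,K)\to\aext^{i+1}(X,Y_0)\to\cdots.$$
For $i\geq 1$, the outer terms vanish because $Y_0\in\caty$ and $\catx\perp\caty$, while $\aext^{i+1}(X,K)=0$ by the inductive hypothesis. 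This forces $\aext^i(X,M)=0$, completing the induction.

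The second claim $\fincorescatx\perp\caty$ is proved by the dual argument: induct on $\xid(N)$, truncate a finite $\catx$-coresolution of $N$ to produce a short exact sequence $0\to N\to X^0\to C\to 0$ with $\xid(C)$ strictly smaller, and apply the long exact sequence of $\aext^*(-,Y)$ for $Y\in\caty$. I expect no real obstacle here; the only point deserving attention is to confirm that truncating a length-$n$ resolution really produces a length-$(n-1)$ resolution of the relevant kernel (or cokernel, in the dual case), which is an easy consequence of Definition~\ref{notation03}.
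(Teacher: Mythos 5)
Your dimension-shifting induction is correct and is exactly the standard argument the paper has in mind when it cites the proof of (2.1) in Auslander–Buchweitz for this lemma. Both halves are handled properly, and the truncation claim does indeed follow directly from Definition~\ref{notation03}.
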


\begin{lem} \label{rel01}
Let $M,M',N,N'$ be objects in $\cata$. 
\begin{enumerate}[\quad\rm(a)]
\item \label{rel01item1}
Assume that $M$ admits a proper $\catw$-resolution
$\gamma\colon W\to M$ and $M'$ admits a proper $\catx$-resolution
$\gamma'\colon X'\to M'$.
For each homomorphism $f\colon M\to M'$ there exists
a morphism $\ol{f}\colon W\to X'$ unique up to homotopy
such that $\gamma'\ol{f}=f\gamma$.
If $f$ is an isomorphism, then $\ol{f}$ is a quasiisomorphism.
If $f$ is an isomorphism and $\catx=\catw$, then  $\ol{f}$ is a
homotopy equivalence.
\item \label{rel01item1'}
Assume that $M$ admits a projective resolution
$\gamma\colon P\to M$ and $M'$ admits a proper $\catx$-resolution
$\gamma'\colon X'\to M'$.
For each homomorphism $f\colon M\to M'$ there exists
a morphism $\wti{f}\colon P\to X'$ unique up to homotopy
such that $\gamma'\wti{f}=f\gamma$.
If $f$ is an isomorphism, then $\wti{f}$ is a quasiisomorphism.
\item \label{rel01item2}
Assume that $N$  admits a proper $\caty$-coresolution
$\delta \colon N\to Y$ and $N'$  admits a proper $\catv$-coresolution
$\delta' \colon N'\to V'$.
For each homomorphism $g\colon N\to N'$ there exists
a morphism $\ol{g}\colon Y\to V'$ unique up to homotopy
such that $\ol{g}\delta=\delta' g$.
If $g$ is an isomorphism, then $\ol{g}$ is a quasiisomorphism.
If $g$ is an isomorphism and $\catv=\caty$, then $\ol{g}$  is a
homotopy equivalence. 
\item \label{rel01item2'}
Assume that $N$  admits a proper $\caty$-coresolution
$\delta \colon N\to Y$  and $N'$  admits 
an injective resolution
$\delta' \colon N'\to I'$.
For each homomorphism $g\colon N\to N'$ there exists
a morphism $\wti{g}\colon Y\to I'$ unique up to homotopy
such that $\wti{g}\delta=\delta' g$.
If $g$ is an isomorphism, then $\wti{g}$ is a quasiisomorphism.
\qed
\end{enumerate}
\end{lem}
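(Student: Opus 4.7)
The plan is to prove part~(a) in detail, since parts~(c) and~(d) are formally dual, and part~(b) is a simpler version requiring only that projective objects lift across surjections. All four assertions follow a single template: construct the lift inductively using the defining exactness property of a proper resolution, prove uniqueness by lifting $0$ and producing a null-homotopy, and deduce the quasiisomorphism and homotopy equivalence claims as formal consequences.

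For the construction in~(a), first observe that because $\catw\subseteq\catx$ and $\gamma'\colon X'\to M'$ is a proper $\catx$-resolution, the augmented complex ${X'}^+$ is $\ahom(\catw,-)$-exact. I will build $\ol{f}_n\colon W_n\to X'_n$ by induction on $n$. For $n=0$, applying $\ahom(W_0,-)$ to ${X'}^+$ produces a surjection $\ahom(W_0,X'_0)\to\ahom(W_0,M')$ and so lifts $f\gamma_0\colon W_0\to M'$ to a map $\ol{f}_0\colon W_0\to X'_0$. For the inductive step, once $\ol{f}_0,\dots,\ol{f}_{n-1}$ have been constructed compatibly with differentials, the composite $\ol{f}_{n-1}\partial^W_n$ maps into $\ker(\partial^{X'}_{n-1})=\im(\partial^{X'}_n)$, and applying $\ahom(W_n,-)$ to the relevant truncation of ${X'}^+$ lifts it along $\partial^{X'}_n$ to define $\ol{f}_n$.

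For uniqueness up to homotopy, given two lifts $\ol{f}$ and $\ol{f}'$, the difference $\ol{f}-\ol{f}'$ is a morphism of complexes covering the zero map on augmentations, and the same $\ahom(\catw,-)$-exactness produces a null-homotopy by the parallel inductive procedure. For the quasiisomorphism claim, the resolutions $\gamma$ and $\gamma'$ are themselves quasiisomorphisms because ${W}^+$ and ${X'}^+$ are exact as sequences in $\cata$; if $f$ is an isomorphism then $f\gamma=\gamma'\ol{f}$ is a quasiisomorphism, and the two-out-of-three property forces $\ol{f}$ to be one as well.

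Finally, the homotopy equivalence in the case $\catx=\catw$ is extracted from uniqueness. Applying~(a) to $f^{-1}\colon M'\to M$ produces a lift $\ol{f^{-1}}\colon X'\to W$, since now both $W$ and $X'$ are proper $\catw$-resolutions. Then $\ol{f^{-1}}\,\ol{f}$ and $\id_W$ are two lifts of $\id_M$ along $\gamma$, hence homotopic by uniqueness; symmetrically $\ol{f}\,\ol{f^{-1}}$ is homotopic to $\id_{X'}$. Part~(b) is the identical argument with $W$ replaced by a projective resolution $P$, where $\ahom(P_n,-)$-exactness of ${X'}^+$ is automatic from its exactness in $\cata$ together with projectivity of $P_n$, so no properness hypothesis on $P$ enters. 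Parts~(c) and~(d) are proved by reversing all arrows and passing to $\ahom(-,\catv)$-exactness of $^+V'$. The main obstacle throughout is purely bookkeeping in the inductive step and the verification that the constructed maps indeed commute with all differentials on the nose; there is no conceptual subtlety beyond the standard comparison theorem for projective resolutions.
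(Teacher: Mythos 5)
Your proof is correct and follows the standard comparison-theorem argument that the paper itself delegates to the cited references (Avramov--Martsinkovsky (4.3) and Enochs--Jenda (8.1.3)): build the lift inductively using $\ahom(\catw,-)$-exactness of the augmented target resolution (available since $\catw\subseteq\catx$), establish homotopy uniqueness by the parallel induction, obtain the quasiisomorphism claim from two-out-of-three, and get the homotopy equivalence when $\catx=\catw$ by applying existence and uniqueness to $f^{-1}$. This is exactly the intended route, with (b) handled by ordinary projectivity and (c), (d) by formal dualization.
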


\begin{lem} \label{horseshoe01}
Let $0\to M'\to M\to M''\to 0$ be an exact sequence in $\cata$.
\begin{enumerate}[\quad\rm(a)]
\item \label{horseshoe01item1}
Assume that $M'$ and $M''$ 
admit proper $\catx$-resolutions $\gamma'\colon X'\to M'$ and 
$\gamma''\colon X''\to M''$
and that the given sequence
is $\ahom(\catx,-)$-exact. Then $M$
admits a proper $\catx$-resolution $\gamma\colon X\to M$
such that there exists a commutative diagram  whose top row is  degreewise
split exact.  
$$
\xymatrix{
0\ar[r] & 
X' \ar[r]\ar[d]_{\gamma'} 
& X \ar[r]\ar[d]_{\gamma} 
& X'' \ar[r]\ar[d]_{\gamma''} & 0 \\
0\ar[r] & M' \ar[r] & M \ar[r] & M'' \ar[r] & 0
}
$$
\item \label{horseshoe01item2}
Assume that $M'$ and $M''$ 
admit proper $\caty$-coresolutions $\delta'\colon M'\to Y'$ and 
$\delta''\colon M''\to Y''$
and that the given sequence
is $\ahom(-,\caty)$-exact. Then $M$ admits a
proper $\caty$-coresolution $\delta\colon M\to Y$
such that there exists a commutative diagram whose bottom row is  degreewise
split exact.
$$
\xymatrix{
&&&
0\ar[r] & M' \ar[r]\ar[d]_{\delta'} & M \ar[r]\ar[d]_{\delta} & M'' \ar[r]\ar[d]_{\delta''} & 0 \\
&&&
0\ar[r] 
& Y' \ar[r]
& Y \ar[r]
& Y'' \ar[r] & 0 &\hspace{6.5mm}& \qed
}
$$
\end{enumerate}
\end{lem}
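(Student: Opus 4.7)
The plan is to prove part (a) by adapting the classical Horseshoe construction to the proper-resolution setting, building $X$ by induction on degree; part (b) will then follow by a dual argument in $\catao$.

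For the base step, set $X_0=X'_0\oplus X''_0$. The $\ahom(\catx,-)$-exactness hypothesis on $0\to M'\to M\to M''\to 0$ supplies surjectivity of $\ahom(X''_0,M)\to\ahom(X''_0,M'')$, so $\gamma''_0$ lifts to some $\rho\colon X''_0\to M$. Define $\gamma_0\colon X_0\to M$ to be $\iota\gamma'_0$ on the first summand and $\rho$ on the second, where $\iota\colon M'\hookrightarrow M$. The snake lemma then yields a short exact sequence of kernels $0\to K'\to K\to K''\to 0$ sitting over the degreewise split sequence $0\to X'_0\to X_0\to X''_0\to 0$.

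The main obstacle is to show that this new sequence of first syzygies is again $\ahom(\catx,-)$-exact, so that the construction iterates. Given $Z\in\catx$ and $f\colon Z\to K''$, I lift the composition $Z\to K''\hookrightarrow X''_0$ through the split surjection $X_0\onto X''_0$ to obtain $h\colon Z\to X_0$. The composite $\gamma_0 h$ dies in $M''$, so it factors as $\iota k'$ for some $k'\colon Z\to M'$; properness of $X'\to M'$ yields a further factorization $k'=\gamma'_0 k$ through $X'_0$. Subtracting the image of $k$ in $X_0$ from $h$ then produces the desired lift $Z\to K$ of $f$. Iterating the whole procedure with $K'$, $K''$, $K$ replacing $M'$, $M''$, $M$, and using $X'_{\geq 1}$, $X''_{\geq 1}$ as proper $\catx$-resolutions of $K'$ and $K''$, builds $X$ in all higher degrees.

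Finally, I will verify that $X\to M$ is proper. The construction produces a degreewise split short exact sequence of augmented complexes $0\to (X')^+\to X^+\to (X'')^+\to 0$. For each $Z\in\catx$, applying $\ahom(Z,-)$ yields a short exact sequence of abelian-group complexes whose outer terms are exact by properness of $X'$ and $X''$; the long exact homology sequence then forces $\ahom(Z,X^+)$ to be exact as well. Part (b) is proved by the dual argument, swapping split surjections for split injections, kernels for cokernels, and invoking the dual lifting property supplied by the $\ahom(-,\caty)$-exactness hypothesis.
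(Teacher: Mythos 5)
Your construction is correct and is essentially the argument the paper points to (it gives no proof of its own, citing \cite[(4.5)]{avramov:aratc} and \cite[pf.~of (8.2.1)]{enochs:rha}): the classical Horseshoe construction, with the $\ahom(\catx,-)$-exactness of the given sequence used to lift $\gamma''_0$, the diagram chase showing the syzygy sequence is again $\ahom(\catx,-)$-exact so the construction iterates, and the degreewise split exact sequence of augmented complexes giving properness of $X$. The only steps left implicit (that truncations of proper resolutions are proper resolutions of the syzygies, and that $\gamma_0$ is epic) are routine and fine to omit.
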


The final result of this section is for Corollary~\ref{balance03}.
It follows from~\cite[(2.3)]{sather:sgc}.

\begin{lem} \label{cogen01}
For each integer $n\geq 0$, let $\catx_n$ and $\caty_n$ be subcategories of $\cata$
such that $\catx_n$ and $\caty_n$ are closed under extensions when $n\geq 2$.
\begin{enumerate}[\quad\rm(a)]
\item \label{cogen01item1}
If $\catx_n$ is a cogenerator for $\catx_{n+1}$ for each $n\geq 0$ and $\catx_n\perp\catx_0$
for each $n\geq 1$, then $\catx_n$ is an injective cogenerator for $\catx_{n+j}$ for each $n,j\geq 0$.\item \label{cogen01item2}
If $\caty_n$ is a generator for $\caty_{n+1}$ for each $n\geq 0$ and $\caty_0\perp\caty_n$
for each $n\geq 1$, then $\caty_n$ is a projective generator for $\caty_{n+j}$ for each $n,j\geq 0$.
\qed
\end{enumerate}
\end{lem}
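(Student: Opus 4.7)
My plan for part (a) is to verify the two defining conditions of ``injective cogenerator'' separately: that $\catx_n$ is a cogenerator for $\catx_{n+j}$, and that $\catx_{n+j}\perp\catx_n$. Each is handled by its own induction, with the second requiring a more delicate argument. Part (b) then follows by the formally dual argument, reversing arrows and swapping the roles of $\aext(-,Y)$ and $\aext(X,-)$.

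For the cogenerator assertion I would induct on $j\geq 0$. The case $j=0$ is immediate (take $0\to X\to X\to 0\to 0$), and $j=1$ is the hypothesis. For $j\geq 2$ and $X\in\catx_{n+j}$, apply the hypothesis at level $n+j-1$ to obtain $0\to X\to W\to X_1\to 0$ with $W\in\catx_{n+j-1}$ and $X_1\in\catx_{n+j}$, then apply the inductive hypothesis at $(n,j-1)$ to $W$ to obtain $0\to W\to V\to W_1\to 0$ with $V\in\catx_n$ and $W_1\in\catx_{n+j-1}$. Letting $X'$ denote the cokernel of the composite monomorphism $X\hookrightarrow W\hookrightarrow V$, a snake-lemma chase yields an exact sequence $0\to X_1\to X'\to W_1\to 0$. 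Since $\catx_{n+j}$ is closed under extensions (as $n+j\geq 2$) and both $X_1$ and $W_1$ lie in $\catx_{n+j}$ (using the containment $\catx_{n+j-1}\subseteq\catx_{n+j}$ that follows by iterating the cogenerator sequence trivially), it follows that $X'\in\catx_{n+j}$, completing the cogenerator step.

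For the Ext-vanishing $\catx_{n+j}\perp\catx_n$, I would induct on $n$. The case $n=0$ is the hypothesis, with the degenerate subcase $j=0$ handled by the containment $\catx_0\subseteq\catx_1$. For $n\geq 1$, given $X\in\catx_{n+j}$ and $Y\in\catx_n$, use the cogenerator $\catx_{n-1}$ for $\catx_n$ to write $0\to Y\to U\to Y'\to 0$ with $U\in\catx_{n-1}$ and $Y'\in\catx_n$. Applying $\aext(X,-)$ and invoking the inductive hypothesis at level $n-1$ (with gap $j+1$, so that $X\in\catx_{(n-1)+(j+1)}$) gives $\aext^{\geq 1}(X,U)=0$, and the resulting long exact sequence identifies $\aext^i(X,Y)$ with $\aext^{i-1}(X,Y')$ for $i\geq 2$. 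To dispatch the residual $\aext^1$ term and to overcome the fact that $Y'$ is still in $\catx_n$, I would interleave this shift with a parallel shift on $X$ via the cogenerator $\catx_n$ for $\catx_{n+j}$ just established, exploiting extension closure of the relevant categories, so that after finitely many iterations the problem reduces to the hypothesis $\catx_{n+j}\perp\catx_0$.

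The main obstacle is precisely this interleaving in the Ext-vanishing step: a naive one-sided dimension shift on $Y$ lands back in $\catx_n$, and a one-sided shift on $X$ lands back in $\catx_{n+j}$, so neither reduces any induction parameter on its own. The subtle point is to organize the double dimension shift so that the extension closure of $\catx_{n+j}$ and $\catx_n$ (available because $n\geq 1$ in the outer induction and the indices involved are $\geq 2$) allows one to splice the resulting short exact sequences and force termination, ultimately reducing the vanishing of every $\aext^i(X,Y)$ to the base hypothesis $\catx_{n+j}\perp\catx_0$.
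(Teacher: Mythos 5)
Your treatment of the cogenerator assertion is correct: the induction on $j$, the splicing of a $\catw\catx$-type sequence at level $n{+}j{-}1$ with one supplied by the inductive hypothesis at $(n,j{-}1)$, the identification of the cokernel $X'$ via the extension $0\to X_1\to X'\to W_1\to 0$, and the appeal to extension-closure of $\catx_{n+j}$ (valid since $n+j\geq 2$) all work.

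The Ext-vanishing half, however, is not a proof; it is a description of an obstacle you did not overcome, and I do not think the sketched ``interleaving'' can be made to close. Your dimension shift on $Y$ via a sequence $0\to Y\to U\to Y'\to 0$ with $U\in\catx_{n-1}$ only produces isomorphisms $\aext^i(X,Y)\cong\aext^{i-1}(X,Y')$ for $i\geq 2$ and exhibits $\aext^1(X,Y)$ as a cokernel of $\ahom(X,U)\to\ahom(X,Y')$, which is not visibly zero. Since $Y'\in\catx_n$ again, no induction parameter drops. The ``parallel shift on $X$'' you then invoke uses $\catx_n$ as a cogenerator for $\catx_{n+j}$, writing $0\to X\to Z\to X'\to 0$ with $Z\in\catx_n$; to exploit this one needs $\aext^{\geq1}(Z,Y)=0$ with both $Z,Y\in\catx_n$, i.e.\ $\catx_n\perp\catx_n$ --- precisely the $j=0$ case of the statement being proved. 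This is circular, not subtle, and it is the whole content of the vanishing claim.

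It is worth noticing that the perpendicularity part of the conclusion needs no dimension shifting at all once one uses the containments $\catx_0\subseteq\catx_1\subseteq\cdots$ that are built into the definition of cogenerator: if one knows $\catx_{m}\perp\catx_{m-1}$ for each $m\geq1$, then for $j\geq1$ the inclusion $\catx_n\subseteq\catx_{n+j-1}$ immediately gives $\catx_{n+j}\perp\catx_n$, and $\catx_n\subseteq\catx_{n+1}\perp\catx_n$ gives the $j=0$ case. So the $\perp$-assertion reduces in one line to $\catx_{m}\perp\catx_{m-1}$; the genuinely nontrivial content of the lemma is the cogenerator claim you proved. The hypothesis as printed, $\catx_n\perp\catx_0$, does not by itself yield $\catx_{m}\perp\catx_{m-1}$: taking $\catx_0=\catp$ and $\catx_n=\catgp$ for all $n\geq1$ satisfies every stated hypothesis (each $\catx_n$ cogenerates $\catx_{n+1}$, $\catgp\perp\catp$, $\catgp$ is extension-closed), yet $\catgp\perp\catgp$ fails in general, e.g.\ over a non-semisimple self-injective Artinian ring where every module is Gorenstein projective. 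What actually makes the argument go through --- and what Corollary~\ref{balance03} in fact supplies when it invokes this lemma --- is that each $\catx_n$ is an \emph{injective} cogenerator for $\catx_{n+1}$, equivalently $\catx_{n+1}\perp\catx_n$. Your write-up should take that as the operative hypothesis, prove the cogenerator step as you did, and then dispatch the perpendicularity by the one-line containment argument rather than by dimension shifting.
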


\section{Categories of Interest} \label{sec8}

Much of the motivation for this work comes from module categories.
In reading this paper, the reader may find it helpful to keep in mind the examples of this
section, wherein $R$ is a commutative ring.  
We return to these examples explicitly in Sections~\ref{sec3} and~\ref{sec2}.

\begin{notation} \label{notation08}
Let $\catm(R)$ denote the category of $R$-modules.
For simplicity, we write $\catp(R)=\catp(\catm(R))$ 
and $\cati(R)=\cati(\catm(R))$.
Also set $\catabel=\catm(\mathbb{Z})$, the category
of abelian groups.
If $\catx(R)$ is a subcategory of $\catm(R)$, then $\catx^f(R)$ is the  subcategory
of finitely generated modules in $\catx(R)$.  
\end{notation}

The study of semidualizing modules was initiated independently (with different names)
by Foxby~\cite{foxby:gmarm}, Golod~\cite{golod:gdagpi},
and Vasconcelos~\cite{vasconcelos:dtmc}.

\begin{defn} \label{notation08a}
An $R$-module $C$ is \emph{semidualizing} if it satisfies the following:
\begin{enumerate}[\quad(1)]
\item $C$ admits a (possibly unbounded) resolution by finite rank free $R$-modules,
\item the natural homothety map $R\to\Hom_R(C,C)$ is an isomorphism, and
\item $\ext_R^{\geq 1}(C,C)=0$.
\end{enumerate}
A finitely generated projective  $R$-module of rank 1 is semidualizing.
If $R$ is Cohen-Macaulay, then $D$  is \emph{dualizing}
if it is semidualizing and $\id_R(D)$ is finite.
\end{defn}

Based on the work of Enochs and Jenda~\cite{enochs:gipm},
the following notions were introduced and studied in this generality by
Holm and J\o rgensen~\cite{holm:smarghd}
and White~\cite{white:gpdrsm}.

\begin{notationdefinition} \label{notation08b}
Let $C$ be a semidualizing $R$-module.
An $R$-module is \emph{$C$-projective}
(resp.,  \emph{$C$-injective})
if it is isomorphic to  $P\otimes_R C$ for some projective 
$R$-module $P$
(resp., $\Hom_R(C,I)$ for some injective $R$-module $I$).
The categories of 
$C$-projective
and $C$-injective $R$-modules
are denoted
$\catpc(R)$ and $\catic(R)$, respectively.

A \emph{complete $\catp\catpc$-resolution} is a complex $X$ of $R$-modules 
satisfying the following:
\begin{enumerate}[\quad(1)]
\item $X$ is exact and $\Hom_R(-,\catpc(R))$-exact, and
\item $X_n$ is projective when  $n\geq 0$ and $X_n$ is  $C$-projective when $n< 0$.
\end{enumerate}
An $R$-module $G$ is \emph{$\text{G}_C$-projective} if there
exists a complete $\catp\catpc$-resolution $X$ such that $G\cong\coker(\partial^X_1)$,
in which case $X$ is a \emph{complete $\catp\catpc$-resolution of $G$}.  We let
$\catgpc(R)$ denote the subcategory of $\text{G}_C$-projective $R$-modules.

The terms \emph{complete $\catic\cati$-coresolution}
and \emph{$\text{G}_C$-injective} are defined dually, and
$\catgic(R)$ is the subcategory of $\text{G}_C$-injective $R$-modules.
\end{notationdefinition}

\begin{fact} \label{ICPG}
Let $C$ be a semidualizing $R$-module.
One has $\catp(R)\cup\catpc(R)\subseteq\catgpc(R)$,
and $\catpc(R)$ is an injective cogenerator for 
$\catgpc(R)$ by~\cite[(3.2),(3.6),(3.9)]{white:gpdrsm}. 
Dually, one has $\cati(R)\cup\catic(R)\subseteq\catgic(R)$,
and $\catic(R)$ is a projective generator for 
$\catgic(R)$.
\end{fact}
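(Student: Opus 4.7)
The plan is to verify the four claims that make up the projective half of the Fact: $\catp(R)\subseteq\catgpc(R)$, $\catpc(R)\subseteq\catgpc(R)$, $\catgpc(R)\perp\catpc(R)$, and that $\catpc(R)$ cogenerates $\catgpc(R)$. The injective dual then follows by the evident argument with $C\otimes_R(-)$ and $\Hom_R(C,-)$ interchanged and all arrows reversed, so I would not treat it separately.

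For the two inclusions, I would construct explicit complete $\catp\catpc$-resolutions. Given a projective $P$, take $X_n=0$ for $n\geq 1$, $X_0=P$, and for the negative part use a proper $\catpc$-coresolution $0\to P\to X_{-1}\to X_{-2}\to\cdots$; such a coresolution would come from first exhibiting a proper $\catpc$-coresolution of $R$ itself (using the semidualizing hypotheses $R\cong\Hom_R(C,C)$ and $\ext_R^{\geq 1}(C,C)=0$) and then extending to $P$ via a direct summand argument against a free module. Given a $C$-projective $W=Q\otimes_R C$ with $Q$ projective, I would splice an ordinary projective resolution of $W$ on the positive side with the trivial coresolution $W\xrightarrow{\id}W$ on the negative side. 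The main verification in each case is $\Hom_R(-,\catpc(R))$-exactness of the resulting complex, which reduces to showing $\ext_R^{\geq 1}(P,Q'\otimes_R C)=0$ and $\ext_R^{\geq 1}(Q\otimes_R C,Q'\otimes_R C)=0$ for projective $Q'$; both should follow from the adjunction between $(-)\otimes_R C$ and $\Hom_R(C,-)$ together with $R\cong\Hom_R(C,C)$.

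For the cogenerator property, given $G\in\catgpc(R)$ with a complete resolution $X$, I would extract the short exact sequence $0\to G\to X_{-1}\to G'\to 0$, where $X_{-1}\in\catpc(R)$ by definition and $G'=\coker(\partial_0^X)$ is again $\text{G}_C$-projective via a shift of the same resolution (truncating $X$ at $X_{-1}$ and reindexing produces a complete $\catp\catpc$-resolution of $G'$). For the orthogonality $\catgpc(R)\perp\catpc(R)$, I would compute $\ext_R^{\geq 1}(G,W)$ for $W\in\catpc(R)$ using the projective portion of $X$, and then invoke condition (1) of the definition of complete $\catp\catpc$-resolution, which forces $\Hom_R(X,W)$ to be exact and therefore forces the Ext groups to vanish.

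The main obstacle is the production of a proper $\catpc$-coresolution of $R$ and the accompanying $\Hom_R(-,\catpc(R))$-exactness, since everything else is essentially bookkeeping. The natural route through this obstacle is via the Auslander and Bass classes $\catac$ and $\catbc$ of $C$: establishing that $\catp(R)\subseteq\catac$ and $\catpc(R)\subseteq\catbc$, and then exploiting the Foxby equivalence $\catac\simeq\catbc$ given by $(-)\otimes_R C$ and $\Hom_R(C,-)$ to transport vanishing Ext computations between the two sides.
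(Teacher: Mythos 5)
The paper does not actually give a proof here; it simply cites~\cite[(3.2),(3.6),(3.9)]{white:gpdrsm}, so you are reconstructing an argument from scratch. Most of your outline is sound, but there is one genuine gap and a couple of soft spots.

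The gap is in the cogenerator step. You claim that $G'=\coker(\partial_0^X)$ is $\text{G}_C$-projective because ``truncating $X$ at $X_{-1}$ and reindexing produces a complete $\catp\catpc$-resolution of $G'$.'' This fails: after the shift, the degree-zero term of the reindexed complex is $X_{-1}$, which lies in $\catpc(R)$ but is not projective in general, so the shifted complex violates the requirement that $X_n$ be projective for $n\geq 0$. Unlike the classical Gorenstein-projective case (where the positive and negative parts of a complete resolution are both projective, so shifting is harmless), the two halves of a complete $\catp\catpc$-resolution are of different type, and the shift does not preserve this structure. The correct repair is to splice: keep the tail $X_{\leq -2}$ as the new negative part, take a genuine projective resolution of $G'$ as the new positive part, and join them via $P_0\twoheadrightarrow G'\hookrightarrow X_{-2}$. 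One must then check that the spliced complex is $\Hom_R(-,\catpc(R))$-exact, which requires (i) $\aext^{\geq 1}_R(G',\catpc(R))=0$ and (ii) $\Hom_R(-,\catpc(R))$-exactness of the short exact sequences arising in the tail. Both follow once you have established $\catgpc(R)\perp\catpc(R)$ and $\catpc(R)\perp\catpc(R)$, so these orthogonalities must come \emph{before} the cogenerator claim in your logical order, not after it as in your write-up.

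Two smaller points. First, the ``direct summand argument against a free module'' for passing from $R$ to a general projective $P$ is not immediate: a complete resolution of $R^{(\Lambda)}=P\oplus P'$ does not visibly decompose as a direct sum of one for $P$ and one for $P'$, and closure of $\catgpc(R)$ under direct summands is itself a nontrivial fact that would need proof. The cleaner route is to apply $P\otimes_R(-)$ to the complete $\catp\catpc$-resolution of $R$ (built from $\Hom_R(L,C)$ for a degreewise finite free resolution $L\to C$): flatness of $P$ preserves exactness, $P\otimes_R\Hom_R(L_n,C)\cong (P\otimes_R L_n^*)\otimes_R C$ stays $C$-projective, and Hom-tensor adjunction plus projectivity of $P$ preserve $\Hom_R(-,\catpc(R))$-exactness. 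Second, verifying $\Hom_R(-,\catpc(R))$-exactness of a spliced complex is not reducible only to the two $\ext$-vanishings you list; one must also check surjectivity of the relevant Hom maps at the splice point, which in the two inclusion cases comes out of the explicit constructions rather than from $\ext$-vanishing alone.
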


The next definition was first introduced by Auslander and 
Bridger~\cite{auslander:adgeteac,auslander:smt}
in the case $C=R$, and in this generality
by Golod~\cite{golod:gdagpi} and Vasconcelos~\cite{vasconcelos:dtmc}.

\begin{notationdefinition} \label{notation08c}
Assume that $R$ is noetherian,  and let $C$ be a
semidualizing $R$-module.  A finitely generated $R$-module
$H$ is \emph{totally $C$-reflexive} if 
\begin{enumerate}[\quad(1)]
\item $\ext_R^{\geq 1}(H,C)=0=\ext_R^{\geq 1}(\Hom_R(H,C),C)$, and 
\item the natural biduality map $H\to\Hom_R(\Hom_R(H,C),C)$ is an isomorphism.
\end{enumerate}
Let
$\catgc(R)$ denote the subcategory of totally $C$-reflexive $R$-modules.
\end{notationdefinition}

\begin{fact} \label{gc}
Assume that $R$ is noetherian  and let $C$ be a
semidualizing $R$-module.  
One has $\catgc(R)=\catgpc^f(R)$ 
by~\cite[(5.4)]{white:gpdrsm}, and so
$\catp^f(R)\cup\catpc^f(R)\subseteq\catgc(R)$.
Also, $\catpc^f(R)$ is an injective cogenerator for 
$\catgc(R)$ by~\cite[(3.9),(5.3),(5.4)]{white:gpdrsm}.  
\end{fact}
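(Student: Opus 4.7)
The plan is to verify this Fact in three stages corresponding to the three assertions: the equality $\catgc(R)=\catgpc^f(R)$, the containment $\catp^f(R)\cup\catpc^f(R)\subseteq\catgc(R)$, and the injective cogenerator property of $\catpc^f(R)$ inside $\catgc(R)$.

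For $\catgc(R)\subseteq\catgpc^f(R)$, given a totally $C$-reflexive module $H$, I would build a complete $\catp\catpc$-resolution explicitly. Take a resolution $P\to H$ by finitely generated free $R$-modules (available since $R$ is noetherian and $H$ is finitely generated) and a resolution $Q\to\Hom_R(H,C)$ of the same type. Applying $\Hom_R(-,C)$ to $Q$ and using the vanishing $\ext_R^{\geq1}(\Hom_R(H,C),C)=0$ together with biduality, one obtains an exact augmented coresolution $0\to H\to\Hom_R(Q_0,C)\to\Hom_R(Q_1,C)\to\cdots$ whose terms are finitely generated $C$-projectives. Splicing with $P^+$ yields a complex $X$ with $X_n$ projective for $n\geq0$ and $C$-projective for $n<0$, exact in all degrees and having $\coker(\partial^X_1)\cong H$. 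To check $\Hom_R(-,\catpc(R))$-exactness, apply $\Hom_R(-,P'\otimes_R C)$ for $P'$ projective and use the tensor-evaluation/adjunction identity together with $\ext_R^{\geq1}(H,C)=\ext_R^{\geq1}(\Hom_R(H,C),C)=0$. Conversely, for $\catgpc^f(R)\subseteq\catgc(R)$, given $G\in\catgpc^f(R)$ with complete resolution $X$, the complex $\Hom_R(X,C)$ is again exact (by the $\Hom_R(-,\catpc(R))$-exactness of $X$ and the fact that $\Hom_R$ of a $C$-projective into $C$ yields a projective, and vice versa), so $\ext_R^{\geq1}(G,C)=0=\ext_R^{\geq1}(\Hom_R(G,C),C)$. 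Biduality is then verified on $X$ degreewise — it is an isomorphism on each projective and each $C$-projective piece — and propagates to $G=\coker(\partial^X_1)$ via the exactness of $X$ and $\Hom_R(X,C)$. The containment $\catp^f(R)\cup\catpc^f(R)\subseteq\catgc(R)$ follows at once from Fact~\ref{ICPG} intersected with the finitely generated hypothesis.

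For the injective cogenerator statement, the orthogonality $\catgc(R)\perp\catpc^f(R)$ is inherited from $\catgpc(R)\perp\catpc(R)$ in Fact~\ref{ICPG} via the inclusions $\catgc(R)\subseteq\catgpc(R)$ and $\catpc^f(R)\subseteq\catpc(R)$. For the cogenerator condition, reuse the complete resolution from the construction above: the short exact sequence $0\to H\to\Hom_R(Q_0,C)\to H'\to 0$ at the splice point places $H$ into a finitely generated $C$-projective module $\Hom_R(Q_0,C)\cong C^{\rank(Q_0)}$, and the cokernel $H'$ carries a shifted complete $\catp\catpc$-resolution, so $H'\in\catgpc(R)$. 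Finite generation of $H'$ follows from finite generation of $H$ and $\Hom_R(Q_0,C)$, so by the equality just proved $H'\in\catgpc^f(R)=\catgc(R)$.

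The main obstacle will be two-fold. First, the bookkeeping of finite generation at every step — in particular confirming that the splice cokernel $H'$ remains finitely generated — is what justifies using $\catpc^f(R)$ rather than $\catpc(R)$ as the cogenerator and makes the equality $\catgc(R)=\catgpc^f(R)$ nontrivial. Second, verifying $\Hom_R(-,\catpc(R))$-exactness of the spliced complex is the technical heart: it is exactly where the twin hypotheses of totally $C$-reflexivity — vanishing $\ext$ on both sides of the $\Hom_R(-,C)$ duality and the biduality isomorphism — are jointly indispensable, and where tensor-evaluation isomorphisms relating $\Hom_R(P,P'\otimes_R C)$ to $P'\otimes_R\Hom_R(P,C)$ are invoked.
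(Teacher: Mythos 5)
Note first that the paper offers no internal proof of this Fact: it is quoted from White~\cite[(3.9),(5.3),(5.4)]{white:gpdrsm}, so the only question is whether your self-contained argument is sound. Your forward inclusion $\catgc(R)\subseteq\catgpc^f(R)$ is essentially correct: splicing a degreewise finite free resolution of $H$ with the $\Hom_R(-,C)$-dual of one for $\Hom_R(H,C)$, and reducing $\Hom_R(-,\catpc(R))$-exactness to exactness of $\Hom_R(X,C)$ via finite generation of the terms, is the standard construction. The genuine gap is in the converse inclusion $\catgpc^f(R)\subseteq\catgc(R)$. There you take an arbitrary complete $\catp\catpc$-resolution $X$ of a finitely generated $G$ and assert that ``$\Hom_R$ of a $C$-projective into $C$ yields a projective'' and that biduality $X_n\to\Hom_R(\Hom_R(X_n,C),C)$ is an isomorphism degreewise. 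Both statements are true for \emph{finitely generated} terms, but the complete resolution witnessing $G\in\catgpc(R)$ need not be degreewise finite: for an infinitely generated projective $P$ one has $\Hom_R(P\otimes_RC,C)\cong\Hom_R(P,R)$, which need not be projective (already over $\mathbb{Z}$ the dual of a countably generated free module is the Baer--Specker group), and degreewise biduality is likewise unavailable. You cannot simply assume $X$ has finitely generated terms, since producing a degreewise finite complete resolution of $G$ is essentially equivalent to the statement being proved; this reduction is exactly the substantive content of White's (5.4), which your sketch bypasses rather than supplies.

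A secondary, fixable imprecision occurs in the cogenerator argument: the shift of your spliced complex is not literally a complete $\catp\catpc$-resolution of $H'=\coker(H\to\Hom_R(Q_0,C))$, because its degree-zero term is $C$-projective rather than projective. Since all terms in your construction are finitely generated, this is easily repaired --- for instance by observing that $H'\cong\Hom_R(K,C)$ where $K$ is the first syzygy of $\Hom_R(H,C)$, and that syzygies and $\Hom_R(-,C)$-duals of totally $C$-reflexive modules are totally $C$-reflexive, or by splicing in a finite free resolution of $H'$ and rechecking the two exactness conditions --- but as written the step does not follow from the definition of $\catgpc(R)$. The orthogonality $\catgc(R)\perp\catpc^f(R)$ via Fact~\ref{ICPG} and the containment $\catp^f(R)\cup\catpc^f(R)\subseteq\catgc(R)$ are fine.
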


Over a noetherian ring, the next categories were introduced by 
Avramov and Foxby~\cite{avramov:rhafgd}
when $C$ is dualizing, and 
by Christensen~\cite{christensen:scatac} for arbitrary $C$.\footnote{Note 
that these works (and others) use the notation $\catac(R)$
and $\catbc(R)$ for certain categories of complexes, while our 
categories consist precisely of the modules in these categories
by~\cite[(4.10)]{christensen:scatac}.}
In the non-noetherian setting, these definitions are from~\cite{holm:fear,white:gpdrsm}.

\begin{notationdefinition} \label{notation08d}
Let $C$ be a
semidualizing $R$-module.  
The \emph{Auslander class} of $C$ is the subcategory $\catac(R)$
of $R$-modules $M$ such that 
\begin{enumerate}[\quad(1)]
\item $\tor^R_{\geq 1}(C,M)=0=\ext_R^{\geq 1}(C,C\otimes_R M)$, and
\item The natural map $M\to\Hom_R(C,C\otimes_R M)$ is an isomorphism.
\end{enumerate}
The \emph{Bass class} of $C$ is the subcategory $\catbc(R)$
of $R$-modules $N$ such that 
\begin{enumerate}[\quad(1)]
\item $\ext_R^{\geq 1}(C,N)=0=\tor^R_{\geq 1}(C,\Hom_R(C,N))$, and 
\item The natural evaluation map $C\otimes_R\Hom_R(C,N)\to N$ is an isomorphism.
\end{enumerate}
\end{notationdefinition}

\begin{fact}\label{projac}
Let $C$ be a
semidualizing $R$-module.  
If any two $R$-modules in a short exact
sequence are in $\catac(R)$, respectively $\catbc(R)$, then so is the
third; see~\cite[(6.7)]{holm:fear}.
There are containments 
$\finrescatpr\cup\fincorescaticr \subseteq\catac(R)\subseteq
\propcorescatic$
and
$\finrescatpcr\cup\fincorescatir \subseteq\catbc(R)\subseteq
\proprescatpc$
by~\cite[(6.4),(6.6)]{holm:fear}
and~\cite[(2.4)]{takahashi:hasm}.
\end{fact}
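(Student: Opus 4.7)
The plan is to verify each assertion directly from the definitions of $\catac(R)$ and $\catbc(R)$, following the approach of Holm--J\o rgensen and Takahashi cited in the statement. The three assertions split naturally into the two-of-three property, the base containments from projectives and $C$-injectives, and the properness containment into $\propcorescatic$.

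For the two-of-three property in $\catac(R)$, I would apply the long exact sequence in $\tor^R_*(C,-)$ to $0 \to M' \to M \to M'' \to 0$. If two of the three modules satisfy $\tor^R_{\geq 1}(C,-)=0$, the long exact sequence forces the third to have the same vanishing, and this additionally yields exactness of $0 \to C \otimes_R M' \to C \otimes_R M \to C \otimes_R M'' \to 0$. Feeding the latter into the long exact sequence in $\ext_R^*(C,-)$ propagates the vanishing $\ext_R^{\geq 1}(C, C \otimes_R -)=0$. Finally, the natural transformation $\mathrm{id} \to \Hom_R(C, C \otimes_R -)$ produces a ladder between the two short exact sequences (the bottom row being exact thanks to the Ext vanishing just established), and the Five Lemma shows that if two of the vertical maps are isomorphisms then so is the third. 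The two-of-three property for $\catbc(R)$ is dual, using the long exact sequences in $\ext_R^*(C,-)$ and $\tor^R_*(C,-)$ applied to the sequence and its image under $\Hom_R(C,-)$.

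For the containments $\finrescatpr \cup \fincorescaticr \subseteq \catac(R)$, I would first establish the base cases $\catp(R) \subseteq \catac(R)$ and $\catic(R) \subseteq \catac(R)$. The former is handled by flatness of projectives together with the observation that $\ext_R^i(C,-)$ commutes with arbitrary direct sums (guaranteed by the finite rank free resolution of $C$ from the semidualizing definition), reducing the Ext vanishing to $\ext_R^{\geq 1}(C,C)=0$, and the isomorphism condition to the homothety $R \xrightarrow{\cong} \Hom_R(C,C)$. For the latter, given $I$ injective, the $C$-injective module $\Hom_R(C,I)$ is analyzed by means of a finitely generated projective resolution $P \to C$: each $\Hom_R(P_i, I)$ is injective, and the defining conditions of the Auslander class are verified via a direct resolution-theoretic computation together with the evaluation isomorphism $C \otimes_R \Hom_R(C, I) \cong I$. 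Once the base cases are in hand, the two-of-three property propagates by induction on resolution length. The Bass class containments $\finrescatpcr \cup \fincorescatir \subseteq \catbc(R)$ are dual.

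For the last containment $\catac(R) \subseteq \propcorescatic$, given $M \in \catac(R)$ I would pick an injective coresolution $I^{\bullet}$ of $C \otimes_R M$ and form $Y^{\bullet} := \Hom_R(C, I^{\bullet})$, a coresolution of $\Hom_R(C, C \otimes_R M) \cong M$ whose terms are $C$-injective by construction. Exactness of the augmented complex is immediate from $\ext_R^{\geq 1}(C, C \otimes_R M)=0$. The main obstacle is verifying \emph{properness}, namely that $\Hom_R(-, J)$ applied to the augmented complex remains exact for every $J \in \catic(R)$. Writing $J = \Hom_R(C, I')$ with $I'$ injective, Hom-tensor adjunction identifies $\Hom_R(Y^{\bullet}, J)$ with $\Hom_R(C \otimes_R Y^{\bullet}, I')$; using that each term of $I^{\bullet}$ is injective (hence in the Bass class, handled in the previous step), the relevant evaluation maps $C \otimes_R \Hom_R(C, I^n) \to I^n$ are isomorphisms, and exactness reduces to that of $\Hom_R(I^{\bullet+}, I')$, which is automatic from injectivity of $I'$. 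The Bass class containment $\catbc(R) \subseteq \proprescatpc$ is dual, using an appropriate projective resolution and the tensor--Hom adjunction in the other direction.
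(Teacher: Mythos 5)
The paper itself gives no proof of this Fact; it simply cites \cite[(6.4),(6.6),(6.7)]{holm:fear} and \cite[(2.4)]{takahashi:hasm}, so your proposal is measured against those arguments. Your last step is sound: coresolving $C\otimes_R M$ injectively, applying $\Hom_R(C,-)$, and checking properness via Hom-tensor adjunction and the evaluation isomorphism (using that injectives lie in $\catbc(R)$) is essentially the proof of \cite[(2.4)]{takahashi:hasm}, and the base cases $\catp(R),\catic(R)\subseteq\catac(R)$ and $\catpc(R),\cati(R)\subseteq\catbc(R)$ are standard computations of the kind you describe.

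The genuine gap is in the two-of-three step. The claim that if two modules in $0\to M'\to M\to M''\to 0$ satisfy $\tor^R_{\geq1}(C,-)=0$ then the long exact sequence forces the same for the third fails in exactly one configuration, and that configuration is the whole content of \cite[(6.7)]{holm:fear}. For $\catac(R)$, when $M'$ and $M$ lie in the class the Tor sequence gives $\tor^R_i(C,M'')=0$ only for $i\geq 2$, while $\tor^R_1(C,M'')\cong\ker(C\otimes_R M'\to C\otimes_R M)$, and nothing in the sequence makes this kernel vanish; the unit isomorphisms do not rescue it, since they only show that $\Hom_R(C,C\otimes_R M')\to\Hom_R(C,C\otimes_R M)$ is injective, hence that $\Hom_R(C,\tor^R_1(C,M''))=0$, and $\Hom_R(C,T)=0$ does not force $T=0$ for an arbitrary module $T$. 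Dually, for $\catbc(R)$ the problematic case is $N,N''\in\catbc(R)$, where $\ext^1_R(C,N')\cong\coker(\Hom_R(C,N)\to\Hom_R(C,N''))$ and surjectivity of $\Hom_R(C,-)$ applied to the epimorphism is precisely what must be proved. Your five-lemma ladder only starts once the tensored (respectively Hom-ed) sequence is known to be short exact, so it cannot close this case. The gap also propagates: your induction for $\finrescatpr\subseteq\catac(R)$ applies two-of-three to $0\to M'\to P\to M\to 0$ with the sub and middle terms in the class, which is exactly the missing case (and dually for $\fincorescatir\subseteq\catbc(R)$). To repair it you need either the actual argument of \cite[(6.7)]{holm:fear}, or the identification of $\catac(R)$ and $\catbc(R)$ with the module parts of the derived Auslander and Bass classes as in \cite[(4.10)]{christensen:scatac}, where two-of-three follows from closure of those classes under triangles.
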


\section{Strict and Proper Resolutions} \label{sec9}

This section focuses on the existence of certain proper resolutions which,
following~\cite{avramov:aratc}, we call ``strict''.
Our treatment 
focuses on the use of ``approximations'' (special cases of the
``special precovers'' of~\cite{enochs:rha}) and blends the approaches 
of~\cite{auslander:htmcma},
\cite{avramov:aratc},
and~\cite{enochs:rha}.

\begin{defn}\label{ab01}
Fix an object $M$ in $\cata$. 
A \emph{bounded strict $\catw\catx$-resolution of $M$}
is a bounded $\catx$-resolution $X\xra{\simeq} M$ such that
$X_n$ is an object in $\catw$ for each $n\geq 1$.  
An exact sequence in $\cata$
$$0\to K \to X_0 \to M \to 0$$
such that $K\in\finrescatw$ and
$X_0\in \catx$ is called an \emph{$\catw\catx$-approximation of $M$}.
The term \emph{$\catw\catx$-hull of $M$} is used for
an exact sequence in $\cata$
$$0\to M\to K'\to X'\to 0$$
such that $K'\in\finrescatw$ and
$X'\in\catx$.
The terms \emph{bounded strict $\caty\catv$-coresolution},
\emph{$\caty\catv$-coapproximation}
and \emph{$\caty\catv$-cohull}
are defined dually.
\end{defn}

The first result of this section outlines the properness properties of 
certain (co)resolutions and (co)approximations.

\begin{lem} \label{xhat}
Assume $\catx\perp\catw$ and $\catv\perp\caty$.
\begin{enumerate}[\quad\rm(a)]
\item \label{xhatitem01}
Bounded $\catw$-resolutions are $\catx$-proper and hence $\catw$-proper.
\item \label{xhatitem02}
If $\catw$ is an injective cogenerator for $\catx$, then 
bounded strict $\catw\catx$-resolutions
are $\catx$-proper and $\catw\catx$-approximations are
$\ahom(\catx,-)$-exact.
\item \label{xhatitem03}
Bounded $\catv$-coresolutions are $\caty$-proper and hence $\catv$-proper.
\item \label{xhatitem04}
If $\catv$ is a projective generator for $\caty$, then 
bounded strict $\caty\catv$-coresolutions
are $\caty$-proper and $\caty\catv$-coapproximations are
$\ahom(-,\caty)$-exact.
\end{enumerate}
\end{lem}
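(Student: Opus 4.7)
My plan is to prove parts (a) and (b) directly and obtain (c) and (d) by passing to the opposite category $\cata^{\mathrm{op}}$, where the dual halves of Lemmas~\ref{perp03} and~\ref{gencat01} apply and the hypothesis $\catv\perp\caty$ plays the role of $\catx\perp\catw$. The common mechanism for (a) and (b) is to splice the bounded augmented resolution $X^+$ into the short exact sequences $0\to K_{i+1}\to X_i\to K_i\to 0$ for $0\leq i\leq n-1$ (with $K_0=M$ and $K_{n+1}=0$), show each of these short pieces is $\ahom(X,-)$-exact for every $X\in\catx$, and concatenate to obtain exactness of $\ahom(X,X^+)$.

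For (a), every $X_i$ lies in $\catw$, so $\catx\perp X_i$ is immediate from the ambient assumption. A descending induction starting from $K_n\cong X_n\in\catw$ uses Lemma~\ref{perp03}(b): its first clause propagates $\catx\perp K_i$ downward from $\catx\perp X_i$ and $\catx\perp K_{i+1}$, and its second clause then certifies $\ahom(\catx,-)$-exactness of each short exact sequence. The ``hence $\catw$-proper'' claim is immediate from the inclusion $\catw\subseteq\catx$.

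For (b), only the $X_i$ with $i\geq 1$ are guaranteed to lie in $\catw$; however, the tail $0\to X_n\to\cdots\to X_i\to K_i\to 0$ is itself a bounded $\catw$-resolution of $K_i$, placing each such $K_i$ in $\finrescatw$. Lemma~\ref{gencat01} applied to $\catx\perp\catw$ then yields $\catx\perp K_i$ for all $i\geq 1$, so the short exact sequences with $i\geq 1$ are handled exactly as in (a). The delicate step, which I expect to be the main obstacle, is $i=0$: since $X_0\in\catx$ need not lie in $\catw$, the symmetric ``iff'' of Lemma~\ref{perp03}(b) is not directly available. Instead, I would invoke the long exact Ext sequence of $0\to K_1\to X_0\to M\to 0$ together with $\aext^1(X,K_1)=0$ (which follows from $K_1\in\finrescatw$ and Lemma~\ref{gencat01}) to force surjectivity of $\ahom(X,X_0)\to\ahom(X,M)$. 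This same $\catx\perp\finrescatw$ fact handles the approximations claim as a single-sequence special case: for $0\to K\to X_0\to M\to 0$ with $K\in\finrescatw$, the vanishing $\aext^1(X,K)=0$ delivers $\ahom(\catx,-)$-exactness of the sequence directly.
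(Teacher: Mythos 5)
Your proposal is correct and follows essentially the same route as the paper's proof: for (a) establish $\catx\perp K_i$ for the syzygies (your descending induction via Lemma~\ref{perp03}\eqref{perp03item2} is exactly the content of Lemma~\ref{gencat01}, which the paper invokes directly), and for (b) splice the bounded $\catw$-resolution of $K=\im(\partial^X_1)$ handled by (a) with the $\catw\catx$-approximation $0\to K\to X_0\to M\to 0$, whose $\ahom(\catx,-)$-exactness you correctly extract from $\catx\perp K$ and the long exact $\aext(X',-)$ sequence rather than from Lemma~\ref{perp03}. Parts (c) and (d) by duality, as in the paper.
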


\begin{proof}
We prove parts~\eqref{xhatitem01} and~\eqref{xhatitem02};  
the others are proved dually.

\eqref{xhatitem01}
Let $M$ be an object in $\cata$ 
admitting a bounded $\catw$-resolution $W\to M$.
We need to show that $\ahom(X,W^+)$ is exact for each
object $X$ in $\catx$.
Set $M_n=\coker(\partial^W_{n+2})$ and, when $n\geq 0$, consider the 
associated exact sequence 
\begin{equation*} 
0\to M_n\to W_{n}\to M_{n-1}\to 0.
\end{equation*}
The object $M_n$ is in $\finrescatw$ for each $n$.
Lemma~\ref{gencat01} implies 
$\catx\perp\finrescatw$, and so the displayed sequence is $\ahom(\catx,-)$-exact
by Lemma~\ref{perp03}\eqref{perp03item2}.
It follows that  $W^+$ is $\ahom(\catx,-)$-exact as well,
that is, the resolution is $\catx$-proper.

\eqref{xhatitem02}
Let $X\to M$ be a bounded strict $\catw\catx$-resolution
such that $X_i=0$ for each $i>n$, and set
$K=\im(\partial^X_1)$.  
The next exact sequence is a bounded $\catw$-resolution
\begin{equation} \label{exact100}
0\to X_n\to\cdots\to X_1\to K\to 0
\end{equation}
and so part~\eqref{xhatitem01}
implies that it is $\ahom(\catx,-)$-exact.
The following sequence
\begin{equation} \label{exact101}
0\to K\to X_0\to M\to 0
\end{equation}
is a $\catw\catx$-approximation.
Once we show that $\catw\catx$-approximations are
$\ahom(\catx,-)$-exact, we will conclude that 
$X$ is $\catx$-proper by splicing the sequences~\eqref{exact100}
and~\eqref{exact101}.

Consider a $\catw\catx$-approximation
as in~\eqref{exact101}.
Using Lemma~\ref{gencat01},
the assumption $\catx\perp\catw$ implies $\catx\perp K$.  
Thus, for each $X'\in\catx$
the long exact sequence in $\aext(X',-)$
associated to~\eqref{exact101} implies that~\eqref{exact101} is $\ahom(\catx,-)$-exact.
\end{proof}

The next two lemmata provide useful conditions guaranteeing the existence of 
proper (co)resolutions. Lemma~\ref{contain01} is for 
use in Proposition~\ref{detect01a}.  

\begin{lem} \label{xhatcor}
Assume that $\catx$ and $\caty$ are closed under extensions, 
$\catw$ is a cogenerator
for $\catx$, and $\catv$ is a generator
for $\caty$. Let $M$ and $N$ be objects in $\cata$.
\begin{enumerate}[\quad\rm(a)]
\item \label{xhatitem05'}
If $\xpd(M)<\infty$, then $M$
has  a
$\catw\catx$-approximation,
a $\catw\catx$-hull, and a bounded strict $\catw\catx$-resolution 
$X\xra{\simeq} M$ such that $X_i=0$ for $i>\xpd(M)$.
\item \label{xhatitem05}
If $\catw$ is an injective cogenerator for 
$\catx$, then $\finrescatx$ is a subcategory of $\proprescatx$.
\item \label{xhatitem06'}
If $\yid(N)<\infty$, then $N$
has  a
$\caty\catv$-coapproximation, a
$\caty\catv$-cohull,
and a bounded strict $\caty\catv$-coresolution $N\xra{\simeq}Y$ such that
$Y_{-i}=0$ for $i>\yid(N)$.
\item \label{xhatitem06}
If $\catv$ is a projective generator for $\caty$, then
$\fincorescaty$ is a subcategory of $\propcorescaty$.
\end{enumerate}
\end{lem}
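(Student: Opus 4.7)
The plan is to prove (a) by induction on $n = \xpd(M)$, building the approximation and the hull in tandem and then splicing to obtain the strict resolution; parts (c) and (d) will follow by dualization, and (b) is a direct consequence of (a) and Lemma~\ref{xhat}. In the base case $n = 0$, one has $M \in \catx$: the trivial sequence $0 \to 0 \to M \to M \to 0$ is an approximation, $M$ in degree zero is a bounded strict $\catw\catx$-resolution, and the cogenerator property applied to $M$ yields a hull $0 \to M \to W \to X' \to 0$ with $W \in \catw \subseteq \finrescatw$ and $X' \in \catx$.

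For the inductive step with $n \geq 1$, I would strengthen the hypothesis to record the bound $\wpd(L) \leq n$ in the hull. Choose an $\catx$-resolution $0 \to X_n \to \cdots \to X_0 \to M \to 0$ and set $M' = \im(\partial^X_1)$, so $0 \to M' \to X_0 \to M \to 0$ is exact with $\xpd(M') \leq n - 1$. By induction, $M'$ admits a hull $0 \to M' \to L' \to X' \to 0$ with $X' \in \catx$ and $\wpd(L') \leq n-1$. The pushout of $M' \to X_0$ along $M' \to L'$ then produces an object $P$ fitting into short exact sequences $0 \to L' \to P \to M \to 0$ and $0 \to X_0 \to P \to X' \to 0$. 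Since $\catx$ is closed under extensions, $P \in \catx$, and the first sequence is the desired approximation of $M$.

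For the hull of $M$, apply the cogenerator property to $P$ to obtain $0 \to P \to W \to X'' \to 0$ with $W \in \catw$ and $X'' \in \catx$; the filtration $L' \subset P \subset W$ gives $P/L' \cong M$ and an exact sequence $0 \to M \to W/L' \to X'' \to 0$, while $0 \to L' \to W \to W/L' \to 0$ shows $\wpd(W/L') \leq n$, so this is a hull of the required strength. Finally, a bounded $\catw$-resolution of $L'$ of length $\leq n - 1$ spliced with $0 \to L' \to P \to M \to 0$ produces a bounded strict $\catw\catx$-resolution $X \xra{\simeq} M$ with $X_0 = P \in \catx$, $X_i \in \catw$ for $1 \leq i \leq n$, and $X_i = 0$ for $i > n$.

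For part (b), given $M \in \finrescatx$, the strict resolution furnished by (a) is $\catx$-proper by Lemma~\ref{xhat}\eqref{xhatitem02}, so $M \in \proprescatx$; parts (c) and (d) are entirely dual. The main obstacle I anticipate is not conceptual but bookkeeping: to make the induction close, the inductive hypothesis must carry sharp bounds on $\wpd(L')$ through the pushout and the snake-lemma manipulation, so that the resulting strict resolution has the stated length. Once the strengthened statement is in place, each step is a standard diagram chase.
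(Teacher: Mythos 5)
Your proof is correct and follows essentially the same approach as the paper, which simply cites Auslander--Buchweitz (Theorem 1.1 of \cite{auslander:htmcma}) for parts (a) and (c) and then derives (b) and (d) from Lemma~\ref{xhat} exactly as you do. The explicit inductive construction you give---pushing out a syzygy sequence along the hull of the syzygy, applying the cogenerator property to the resulting $\catx$-object, and strengthening the inductive hypothesis to carry the bound $\wpd(L)\leq n$ so the lengths close up---is precisely the Auslander--Buchweitz argument.
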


\begin{proof}
Parts~\eqref{xhatitem05'} and~\eqref{xhatitem06'} follow 
as in~\cite[(1.1)]{auslander:htmcma}.
Parts~\eqref{xhatitem05} and~\eqref{xhatitem06}
follow from~\eqref{xhatitem05'} and~\eqref{xhatitem06'} using
Lemma~\ref{xhat}\eqref{xhatitem02} and~\eqref{xhatitem04}.
\end{proof}

\begin{lem} \label{contain01}
Assume that $\catx$ and $\caty$ are closed under extensions, 
$\catw$ is a cogenerator
for $\catx$, and $\catv$ is a generator
for $\caty$. \begin{enumerate}[\quad\rm(a)]
\item \label{contain01item1}
If $\catx$ is a subcategory of $\proprescatw$, then $\finrescatx$ is a subcategory of $\proprescatw$.
\item \label{contain01item2}
If $\caty$ is a subcategory of $\propcorescatv$, then $\fincorescaty$ is a subcategory of 
$\propcorescatv$.
\end{enumerate}
\end{lem}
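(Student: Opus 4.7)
The plan is to prove part (a) by induction on $n = \xpd(M)$ for $M \in \finrescatx$; part (b) follows by the formally dual argument applied to $\caty$, $\catv$, and coresolutions. The base case $n = 0$ is immediate, since then $M$ itself lies in $\catx$, which by assumption is contained in $\proprescatw$.

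For the inductive step with $n \geq 1$, I invoke Lemma~\ref{xhatcor}\eqref{xhatitem05'} --- applicable because $\catw$ is a cogenerator for $\catx$ --- to extract a bounded strict $\catw\catx$-resolution
\[ 0 \to W_n \to W_{n-1} \to \cdots \to W_1 \to X_0 \to M \to 0, \]
with $X_0 \in \catx$ and $W_i \in \catw$ for $1 \leq i \leq n$. Setting $K = \ker(X_0 \to M) = \im(W_1 \to X_0)$, the truncation $0 \to W_n \to \cdots \to W_1 \to K \to 0$ exhibits a $\catw$-resolution of $K$ of length $n-1$, so $K \in \finrescatw$ with $\wpd(K) \leq n - 1$; in particular $\xpd(K) \leq n - 1$. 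The inductive hypothesis then yields $K \in \proprescatw$, while the hypothesis $\catx \subseteq \proprescatw$ yields $X_0 \in \proprescatw$.

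At this point the goal is to splice the proper $\catw$-resolutions of $K$ and $X_0$ into a proper $\catw$-resolution of $M$ via the Horseshoe Lemma~\ref{horseshoe01}\eqref{horseshoe01item1}. This reduces to verifying that the short exact sequence
\[ 0 \to K \to X_0 \to M \to 0 \]
is $\ahom(\catw,-)$-exact; given this, Horseshoe directly assembles the desired proper $\catw$-resolution of $M$ and completes the induction.

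The hard part will be establishing this $\ahom(\catw,-)$-exactness. Unlike in the setup of Lemma~\ref{xhat}\eqref{xhatitem02}, we have no blanket orthogonality $\catx \perp \catw$, so the exactness cannot be read off from an injective-cogenerator hypothesis. Instead, given $W' \in \catw \subseteq \catx$ and $f \colon W' \to M$, I plan to produce a lift $W' \to X_0$ by combining the proper $\catw$-resolution $P \to X_0$ (which supplies the surjection $\Hom(W',P_0) \onto \Hom(W',X_0)$ by its $\ahom(\catw,-)$-exactness at the augmentation) with the proper $\catw$-resolution of $K$ furnished by the inductive hypothesis; chasing through these two resolutions and the bounded $\catw$-resolution of $K$ inherited from the strict resolution of $M$, one should annihilate the obstruction class in $\aext^1(W',K)$ associated with the approximation, producing the lift and thereby the required exactness.
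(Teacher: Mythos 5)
Your approach contains a genuine gap at the key step. You propose to apply the Horseshoe Lemma~\ref{horseshoe01}\eqref{horseshoe01item1} to the short exact sequence $0 \to K \to X_0 \to M \to 0$ in order to produce a proper $\catw$-resolution of $M$. But the Horseshoe Lemma assembles a proper resolution of the \emph{middle} term of a short exact sequence from proper resolutions of the two end terms; applied to your sequence it would produce a proper $\catw$-resolution of $X_0$, not of $M$. Nor can you salvage this by directly splicing a proper $\catw$-resolution of $K$ onto $0 \to K \to X_0 \to M \to 0$: the resulting complex would have $X_0$ in degree $0$, and $X_0$ lies in $\catx$, not in $\catw$, so it is not a $\catw$-resolution.

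The missing idea is a pullback that replaces $X_0$ by an object of $\catw$ before splicing. Since $\catx\subseteq\proprescatw$, take a proper $\catw$-resolution $W\xra{\simeq}X_0$ with augmentation $\tau\colon W_0\onto X_0$, compose with $X_0\onto M$, and set $U=\ker(W_0\to M)$; this yields a short exact sequence $0\to U\to W_0\to M\to 0$ with $W_0\in\catw$, sitting in a pullback diagram whose left column is $0\to \im(\partial_1^W)\to U\to K\to 0$. The Horseshoe Lemma is then applied to \emph{that} left column to conclude $U\in\proprescatw$, after which a proper $\catw$-resolution of $U$ splices with $0\to U\to W_0\to M\to 0$ to give the desired resolution of $M$ -- valid precisely because $W_0\in\catw$. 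This is how the paper proceeds; it also makes the induction you set up unnecessary, and the unfinished obstruction-class argument in your final paragraph is replaced there by a direct diagram chase that establishes the required $\ahom(\catw,-)$-exactness of the middle row.
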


\begin{proof}
We prove part~\eqref{contain01item1}; the proof of part~\eqref{contain01item2} is dual.
Let $M$ be an object in $\finrescatx$.  
By Lemma~\ref{xhatcor}\eqref{xhatitem05'}, the object $M$
admits a $\catw\catx$-approximation
\begin{equation} \label{exseq04}
0\to K\to X\to M\to 0.
\end{equation}
Since
$\catx$ is a subcategory of $\proprescatw$, the object $X$ admits
a proper $\catw$-resolution $W\xra{\simeq}X$.  Set $X'=\im(\partial_1^W)$.
Notice that the object $X'$ is  in $\proprescatw$
and the following natural exact sequence is $\ahom(\catw,-)$-exact
\begin{equation} \label{exseq05}
0\to X'\to W_0\xra{\tau} X\to 0.
\end{equation}
In the following pullback diagram, 
each row and column is exact, 
the bottom row
is~\eqref{exseq04}, and the middle column is~\eqref{exseq05}.
\begin{equation} \label{diag01}
\begin{split}
\xymatrix{
& 0 \ar[d] & 0 \ar[d] \\
& X'\ar[d]\ar[r]^{\cong} & X'\ar[d] \\
0 \ar[r] & U \ar[r]\ar[d]\ar@{}[rd]|<<{\ulcorner}  & W_0\ar[r]^{\pi}\ar[d]_{\tau} & M \ar[r]\ar[d]_{\cong} & 0 \\
0 \ar[r] & K \ar[r]\ar[d] & X\ar[r]\ar[d] & M \ar[r] & 0 \\
& 0 & 0
}
\end{split}
\end{equation}
We will show that $U$ is  in $\proprescatw$ and that the middle row
of~\eqref{diag01} is $\ahom(\catw,-)$-exact.  
It is then straightforward to see that a proper
$\catw$-resolution of $M$ can be obtained by splicing a a proper
$\catw$-resolution of $U$ with the middle row
of~\eqref{diag01}.

Let $W'$ be an object in $\catw$.  
The assumption $\catx\perp\catw$ implies $\catw\perp\catw$ and
so $\aext^1(W',W_0)=0$.  The long exact sequence in $\aext(W',-)$ 
associated to the middle column of~\eqref{diag01} includes 
the next exact sequence
\begin{equation*}
\ahom(W',W_0)\xra{\ahom(W',\tau)}
\ahom(W',X)\to\aext^1(W',X')\to 0.
\end{equation*}
The middle column of~\eqref{diag01} is 
$\ahom(\catw,-)$-exact, so the map $\ahom(W',\tau)$ is surjective, 
and it follows that $\aext^1(W',X')=0$.
Lemma~\ref{perp03}\eqref{perp03item2} implies that
the leftmost column of~\eqref{diag01} is 
$\ahom(W',-)$-exact.  
Since $W'$ is an arbitrary object of $\catw$, this column
is $\ahom(\catw,-)$-exact.  The object $K$ is in $\proprescatw$ by 
Lemma~\ref{xhat}\eqref{xhatitem01}.  Since $X'$ is also an object in
$\proprescatw$, we may apply Lemma~\ref{horseshoe01}\eqref{horseshoe01item1} 
to the leftmost column of~\eqref{diag01} to conclude that
$U$ is  in $\proprescatw$.

To conclude, we need to show that the middle row of~\eqref{diag01}
is $\ahom(W',-)$-exact, that is,
that $\ahom(W',\pi)$ is surjective.  
Applying $\ahom(W',-)$ to the middle and lower rows of~\eqref{diag01}
yields the next commutative diagram with exact rows.
\begin{equation*} 
\xymatrix{
0 \ar[r] & \ahom(W',U) \ar[r]\ar[d] & \ahom(W',W_0)\ar[rr]^{\ahom(W',\pi)}\ar[d]_{\ahom(W',\tau)} 
&& \ahom(W',M) \ar[d]_{\cong}  \\
0 \ar[r] & \ahom(W',K) \ar[r] & \ahom(W',X)\ar[rr] && \ahom(W',M) \ar[r] & 0
}
\end{equation*}
Recalling that $\ahom(W',\tau)$ is surjective, chase this last  diagram to conclude that
$\ahom(W',\pi)$ is also surjective.
\end{proof}

\section{Relative Cohomology} \label{sec5}

This section contains the foundations of our 
relative cohomology theories based on the context of Section~\ref{sec9}.

\begin{notationdefinition} \label{rel02}
Let $M,M',N,N'$ be objects in $\cata$ 
equipped with homomorphisms
$f\colon M\to M'$ and $g\colon N\to N'$.
Assume that $M$ admits a proper $\catx$-resolution 
$\gamma \colon X\to M$, and define
the \emph{$n$th relative $\catx\cata$ cohomology group} as
$$\xaext^n(M,N)=\HH_{-n}(\ahom(X,N))$$
for each integer $n$.
If $M'$ also admits a proper $\catx$-resolution 
$\gamma'\colon X'\to M'$, then 
let $\ol{f}\colon X\to X'$ be
a morphism such that $\gamma'\ol{f}=f\gamma$,
as in Lemma~\ref{rel01}\eqref{rel01item1}, and define
\begin{gather*}
\xaext^n(f,N)=\HH_{-n}(\ahom(\ol{f},N))\colon\xaext^n(M',N)\to\xaext^n(M,N) \\
\xaext^n(M,g)=\HH_{-n}(\ahom(X,g))\colon\xaext^n(M,N)\to\xaext^n(M,N'). 
\end{gather*}
We write $\xaext^{\geq1}(M,\caty)=0$ if $\xaext^{\geq1}(M,Y)=0$
for each object $Y\in \caty$.  When $\catx\subseteq\proprescatw$,
we write $\waext^{\geq1}(\catx,\caty)=0$ if $\waext^{\geq1}(X,\caty)=0$
for each object $X\in \catx$.

The \emph{$n$th relative $\cata\caty$-cohomology}
$\ayext^n(-,-)$ is defined dually.
\end{notationdefinition}

\begin{disc} \label{rel03}
Definition/Notation~\ref{rel02} describes well-defined bifunctors
\begin{align*}
\xaext^n(-,-)&\colon\proprescatx\times\cata\to \catabel & & &
\ayext^n(-,-)&\colon\cata\times \propcorescaty\to \catabel
\end{align*}
by Lemma~\ref{rel01},
and one checks the following natural equivalences readily.
\begin{gather*}
\xaext^{\geq 1}(\catx,-)=0=\ayext^{\geq 1}(-,\caty) \\
\xaext^0(-,-)\cong\ahom(-,-)|_{\proprescatx\times\cata} \qquad
\ext^n_{\catp\cata}(-,-)\cong\aext^n(-,-)|_{\proprescat{P}\times\cata}\\
\ayext^0(-,-)\cong\ahom(-,-)|_{\cata\times \propcorescaty}  \qquad
\ext^n_{\cata\cati}(-,-)\cong \aext^n(-,-)|_{\cata\times \propcorescat{I}}
\end{gather*}
\end{disc}

Lemma~\ref{horseshoe01} yields the following long exact sequences 
as in~\cite[(8.2.3),(8.2.5)]{enochs:rha}.

\begin{lem} \label{notation06a}
Let $M$ and $N$ be objects in $\cata$, and consider an exact  sequence in $\cata$
$$\mathbf{L}=\quad 0\to L'\xra{f'} L\xra{f} L''\to 0.$$
\begin{enumerate}[\quad\rm(a)]
\item \label{06aitem1}
Assume that the sequence $\mathbf{L}$ is $\ahom(\catx,-)$-exact.
If the object $M$ is in $\proprescatx$, then $\mathbf{L}$ induces a functorial long exact sequence
\begin{align*}
\cdots \to
& \xaext^n(M,L') \xra{\xaext^n(M,f')}
 \xaext^n(M,L) \xra{\xaext^n(M,f)}
  \\
& \xaext^n(M,L'')\xra{\eth^n_{\catx\!\cata}(M,\mathbf{L})}
 \xaext^{n+1}(M,L') \xra{\xaext^{n+1}(M,f')}
 \cdots 
\end{align*}
\item \label{06aitem2}
Assume that the sequence $\mathbf{L}$ is $\ahom(\catx,-)$-exact.
If the objects $L',L,L''$ are in 
$\proprescatx$, then $\mathbf{L}$ induces a functorial long exact sequence
\begin{align*}
\cdots \to
& \xaext^n(L'',N) \xra{\xaext^n(f,N)}
 \xaext^n(L,N) \xra{\xaext^n(f',N)}
  \\
& \xaext^n(L',N)\xra{\eth^n_{\catx\!\cata}(\mathbf{L},N)}
 \xaext^{n+1}(L'',N) \xra{\xaext^{n+1}(f,N)}
 \cdots 
\end{align*}
\item \label{06aitem3}
Assume that the sequence $\mathbf{L}$ is  $\ahom(-,\caty)$-exact.
If the object $N$ is in $\propcorescaty$, then $\mathbf{L}$ induces a functorial long exact sequence
\begin{align*}
\cdots \to
& \ayext^n(L'',N) \xra{\ayext^n(f,N)}
 \ayext^n(L,N) \xra{\ayext^n(f',N)}
  \\
& \ayext^n(L',N)\xra{\eth^n_{\cata\caty}(\mathbf{L},N)}
 \ayext^{n+1}(L'',N) \xra{\ayext^{n+1}(f,N)}
 \cdots 
\end{align*}
\item \label{06aitem4}
Assume that the sequence $\mathbf{L}$ is  $\ahom(-,\caty)$-exact.
If the objects $L',L,L''$ are in 
$\propcorescaty$, then $\mathbf{L}$ induces a functorial long exact sequence
\begin{align*}
&&&&&&&&\cdots \to
& \ayext^n(M,L') \xra{\ayext^n(M,f')}
 \ayext^n(M,L) \xra{\ayext^n(M,f)}
  \\
\hspace{0.5em}&&&&&&&&& \ayext^n(M,L'')\xra{\eth^n_{\cata\caty}(M,\mathbf{L})}
 \ayext^{n+1}(M,A') \xra{\ayext^{n+1}(M,f')}
 \cdots &&&& \qed
\end{align*}
\end{enumerate}
\end{lem}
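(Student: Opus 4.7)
The plan is to prove (a) and (b) directly and obtain (c), (d) by formal dualization, in each case following the standard ``double complex'' style of reasoning and relying on Lemma~\ref{horseshoe01} for part (b) and its dual for part (d).

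For part \eqref{06aitem1}, I would begin with a proper $\catx$-resolution $\gamma\colon X\to M$, which exists by hypothesis. Apply the functor $\ahom(-,-)$ in the second slot to the short exact sequence $\mathbf{L}$, levelwise over $X$. Because each $X_p$ is an object of $\catx$, the $\ahom(\catx,-)$-exactness of $\mathbf{L}$ says precisely that the resulting sequence
\begin{equation*}
0\to\ahom(X,L')\to\ahom(X,L)\to\ahom(X,L'')\to 0
\end{equation*}
is a short exact sequence of complexes of abelian groups. The long exact sequence in homology, re-indexed by the sign convention of Definition/Notation~\ref{rel02}, gives the desired six-term sequence. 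Naturality in $\mathbf{L}$ comes from naturality of the snake lemma, since any morphism of such $\ahom(\catx,-)$-exact sequences induces a morphism of the associated short exact sequences of Hom-complexes.

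For part \eqref{06aitem2}, the key input is Lemma~\ref{horseshoe01}\eqref{horseshoe01item1}: fix proper $\catx$-resolutions $X'\to L'$ and $X''\to L''$; the hypothesis on $\mathbf{L}$ then produces a proper $\catx$-resolution $X\to L$ sitting in a commutative diagram whose top row $0\to X'\to X\to X''\to 0$ is degreewise split exact. Because that top row is split in each degree, applying $\ahom(-,N)$ yields a short exact sequence of complexes
\begin{equation*}
0\to\ahom(X'',N)\to\ahom(X,N)\to\ahom(X',N)\to 0,
\end{equation*}
and the long exact sequence in homology is the asserted six-term sequence, once one identifies the homology of these Hom-complexes with $\xaext^n(L'',N)$, $\xaext^n(L,N)$, $\xaext^n(L',N)$, respectively. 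The identification is legitimate independent of the original choice of proper resolutions of $L', L, L''$ used to define $\xaext^n$: by Lemma~\ref{rel01}\eqref{rel01item1} any two proper $\catx$-resolutions of a given object are linked by a quasiisomorphism unique up to homotopy, so the induced maps on Hom-homology are canonical isomorphisms. Functoriality of the long exact sequence in $\mathbf{L}$ follows again from the naturality of the Horseshoe construction together with Lemma~\ref{rel01}\eqref{rel01item1}.

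Parts \eqref{06aitem3} and \eqref{06aitem4} are completely dual: replace the proper $\catx$-resolution by a proper $\caty$-coresolution of $N$ in (c), and invoke Lemma~\ref{horseshoe01}\eqref{horseshoe01item2} to build a proper $\caty$-coresolution of $L$ that fits into a degreewise-split short exact sequence of coresolutions in (d); the remainder of each argument transposes word for word. The main technical obstacle, in my view, is not the construction of the six-term sequences themselves but the verification of naturality of the connecting homomorphisms $\eth^n$ in the variable $\mathbf{L}$ and, for parts (b) and (d), the independence of those connecting maps from the particular Horseshoe resolutions chosen. Both issues dissolve with a careful, but essentially routine, appeal to the uniqueness-up-to-homotopy clauses of Lemma~\ref{rel01}, which ensures that any two choices of comparison morphisms induce the same maps after passing to homology of the relevant Hom-complexes.
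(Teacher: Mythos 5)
Your approach is essentially the same as the paper's, which cites the Horseshoe Lemma (Lemma~\ref{horseshoe01}) together with the style of~\cite[(8.2.3),(8.2.5)]{enochs:rha}: for (a) apply $\ahom(X,-)$ to $\mathbf{L}$ using $\ahom(\catx,-)$-exactness, for (b) take the degreewise-split Horseshoe resolution of $L$ and apply $\ahom(-,N)$, then dualize for (c), (d). One small imprecision: for independence of the choice of resolutions you invoke a ``quasiisomorphism unique up to homotopy,'' but a mere quasiisomorphism of complexes need not induce isomorphisms after applying $\ahom(-,N)$ — what Lemma~\ref{rel01}\eqref{rel01item1} actually gives (when $\catw=\catx$) is a homotopy equivalence, which is the right notion here and is what underlies the well-definedness recorded in Remark~\ref{rel03}.
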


To prove the next ``dimension-shifting'' lemma, 
comparable to~\cite[(8.2.4),(8.2.6)]{enochs:rha},
use
the long exact sequences from 
Lemma~\ref{notation06a} with
the vanishing from Remark~\ref{rel03}.

\begin{lem} \label{dimshift01}
Let $M$ and $N$ be objects in $\cata$, and consider an exact  sequence in $\cata$
$$\mathbf{L}=\quad 0\to L'\xra{f'} L\xra{f} L''\to 0.$$
\begin{enumerate}[\quad\rm(a)]
\item \label{dimshift01item1}
Assume that the sequence $\mathbf{L}$ is  $\ahom(\catx,-)$-exact
and that $M$ is in $\proprescatx$.
If $\xaext^{\geq 1}(M,L)=0$, e.g., if $M$ is in $\catx$,   then the
following map is an isomorphism for each  $n\geq 1$
$$\eth^n_{\catx\!\cata}(M,\mathbf{L})\colon\xaext^n(M,L'')\xra{\cong}\xaext^{n+1}(M,L').$$
\item \label{dimshift01item2}
Assume that the sequence $\mathbf{L}$ is  $\ahom(\catx,-)$-exact
and that  $L,L',L''$ are in $\proprescatx$.
If $\xaext^{\geq 1}(L,N)=0$, e.g., if $L$ is in $\catx$,    then the
following map is an isomorphism for each  $n\geq 1$
$$\eth^n_{\catx\!\cata}(\mathbf{L},N)\colon\xaext^n(L',N)\xra{\cong}\xaext^{n+1}(L'',N).$$
\item \label{dimshift01item3}
Assume that the sequence $\mathbf{L}$ is  $\ahom(-,\caty)$-exact
and that $N$ is in $\propcorescaty$.
If $\ayext^{\geq 1}(L,N)=0$, e.g., if $N$ is in $\caty$,   then the
following map is an isomorphism for each  $n\geq 1$
$$\eth^n_{\cata\caty}(\mathbf{L},N)\colon\ayext^n(L',N)\xra{\cong}\ayext^{n+1}(L'',N).
$$
\item \label{dimshift01item4}
Assume that the sequence $\mathbf{L}$ is  $\ahom(-,\caty)$-exact
and that $L,L',L''$ are  in $\propcorescaty$.
If $\ayext^{\geq 1}(M,L)=0$, e.g., if $L$ is in $\caty$,   then the
following map is an isomorphism for each  $n\geq 1$
\begin{align*}
\hspace{0.8em}&&&&&&&&&&&&&&
\eth^n_{\cata\caty}(M,\mathbf{L})&\colon\ayext^n(M,L'')\xra{\cong}\ayext^{n+1}(M,L').
&&&&&&&&&&&\qed
\end{align*}
\end{enumerate}
\end{lem}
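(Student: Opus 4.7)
The plan is to read off each isomorphism directly from the long exact sequence furnished by the corresponding part of Lemma~\ref{notation06a}, using the vanishing statements $\xaext^{\geq 1}(\catx,-)=0$ and $\ayext^{\geq 1}(-,\caty)=0$ recorded in Remark~\ref{rel03}. In each case, the connecting homomorphism $\eth^n$ is flanked on both sides by groups that vanish by hypothesis, so the sequence collapses into an isomorphism.

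In detail, for part~\eqref{dimshift01item1}, the hypotheses that $\mathbf{L}$ is $\ahom(\catx,-)$-exact and $M\in\proprescatx$ put us in the setting of Lemma~\ref{notation06a}\eqref{06aitem1}, which produces the strand
$$\xaext^n(M,L)\to \xaext^n(M,L'')\xra{\eth^n_{\catx\!\cata}(M,\mathbf{L})}\xaext^{n+1}(M,L')\to\xaext^{n+1}(M,L)$$
inside the associated long exact sequence. The assumption $\xaext^{\geq 1}(M,L)=0$ kills the outer two terms for every $n\geq 1$, forcing $\eth^n_{\catx\!\cata}(M,\mathbf{L})$ to be an isomorphism. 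The parenthetical "e.g." clause follows because, when $M$ lies in $\catx$, Remark~\ref{rel03} gives $\xaext^{\geq 1}(M,-)=0$, so the required vanishing is automatic.

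Parts~\eqref{dimshift01item2}, \eqref{dimshift01item3}, and~\eqref{dimshift01item4} proceed in exactly the same way, each drawing its long exact sequence from the corresponding part of Lemma~\ref{notation06a} and invoking the matching vanishing from Remark~\ref{rel03}. Specifically, part~\eqref{dimshift01item2} uses Lemma~\ref{notation06a}\eqref{06aitem2} together with $\xaext^{\geq 1}(L,N)=0$ (automatic when $L\in\catx$ by $\xaext^{\geq 1}(\catx,N)=0$); part~\eqref{dimshift01item3} uses Lemma~\ref{notation06a}\eqref{06aitem3} with $\ayext^{\geq 1}(L,N)=0$ (automatic when $N\in\caty$); and part~\eqref{dimshift01item4} uses Lemma~\ref{notation06a}\eqref{06aitem4} with $\ayext^{\geq 1}(M,L)=0$ (automatic when $L\in\caty$).

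There is really no obstacle here: once the long exact sequences of Lemma~\ref{notation06a} are in hand and the vanishing conventions of Remark~\ref{rel03} are invoked, each of the four assertions reduces to the observation that a map sandwiched between two zeros in an exact sequence is bijective. The only point requiring mild attention is verifying that each "e.g." hypothesis indeed supplies the needed vanishing, which is immediate from Remark~\ref{rel03}.
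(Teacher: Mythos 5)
Your argument is correct and follows the paper's own approach exactly: the paper itself prescribes proving Lemma~\ref{dimshift01} by combining the long exact sequences of Lemma~\ref{notation06a} with the vanishing $\xaext^{\geq1}(\catx,-)=0=\ayext^{\geq1}(-,\caty)$ from Remark~\ref{rel03}, and you carry this out cleanly in all four parts, including the ``e.g.''\ clauses.
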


The next result is motivated by~\cite[(4.2.2.a)]{avramov:aratc}.

\begin{prop} \label{detect00}
Let $M$ and $N$ be objects in $\proprescatx$ and $\propcorescaty$,
respectively, and let
$n$ be a nonnegative integer.
\begin{enumerate}[\quad\rm(a)]
\item \label{detect00item2}
Assume that $\catx$ is closed under direct summands and 
$\xaext^{n+1}(M,-)=0$.
If $X\to M$ is a proper $\catx$-resolution, then 
$\ker(\partial^X_{n-1})\in\catx$ and 
$\xpd(M)\leq n$.
\item \label{detect00item1}
Assume that one of the following conditions holds:
\begin{enumerate}[\quad\rm(1)]
\item \label{detect00item11}
$\catx\perp\catx$, or
\item \label{detect00item12}
$\catx$ is closed under extensions and $\catw$ is an
injective cogenerator for $\catx$.
\end{enumerate}
Then $\xaext^n(M,-)=0$ whenever $n>\xpd(M)$.
\item \label{detect00item4}
Assume that $\caty$ is closed under direct summands
and $\ayext^{n+1}(-,N)=0$.
If $N\to Y$ is a proper $\caty$-coresolution, then 
$\coker(\partial^Y_{1-n})\in\caty$ and 
$\yid(N)\leq n$.
\item \label{detect00item3}
Assume that one of the following conditions holds:
\begin{enumerate}[\quad\rm(1)]
\item \label{detect00item31}
$\caty\perp\caty$, or
\item \label{detect00item32}
$\caty$ is closed under extensions and $\catv$ is a
projective cogenerator for $\caty$.
\end{enumerate}
Then $\ayext^n(-,N)=0$ whenever $n>\yid(N)$.
\end{enumerate}
\end{prop}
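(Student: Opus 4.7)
The plan is to prove parts~(a) and~(b); parts~(c) and~(d) will then follow from the same arguments after dualizing.

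For part~(a), I proceed by iterated dimension-shifting against the given proper $\catx$-resolution $X\to M$. Set $K_0=M$ and $K_i=\ker(\partial^X_{i-1})$ for $i\geq 1$, so that the induced short exact sequences $\mathbf{L}_i\colon 0\to K_{i+1}\to X_i\to K_i\to 0$ are $\ahom(\catx,-)$-exact (by properness) and each truncation $X_{\geq i}\to K_i$ is itself a proper $\catx$-resolution, so that every $K_i$ lies in $\proprescatx$. Applying Lemma~\ref{dimshift01}\eqref{dimshift01item2} to $\mathbf{L}_0,\ldots,\mathbf{L}_{n-1}$---using that each $X_i\in\catx$ kills $\xaext^{\geq 1}$ in the first slot---produces isomorphisms
$$\xaext^{n+1}(M,-)\cong \xaext^n(K_1,-)\cong\cdots\cong\xaext^1(K_n,-),$$
so the vanishing hypothesis forces $\xaext^1(K_n,K_{n+1})=0$. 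The long exact sequence of Lemma~\ref{notation06a}\eqref{06aitem1} associated to $\mathbf{L}_n$ with first variable $K_n$ then yields surjectivity of $\ahom(K_n,X_n)\to\ahom(K_n,K_n)$, and lifting $\id_{K_n}$ splits $\mathbf{L}_n$. Hence $K_n$ is a direct summand of $X_n\in\catx$; since $\catx$ is closed under direct summands, $K_n\in\catx$, and truncating $X$ at this point gives an $\catx$-resolution of $M$ of length at most $n$, so $\xpd(M)\leq n$.

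For part~(b), the goal is to realize $\xaext^n(M,-)$ on a proper $\catx$-resolution of length at most $\xpd(M)$. Under hypothesis~(2), Lemma~\ref{xhatcor}\eqref{xhatitem05'} yields a bounded strict $\catw\catx$-resolution $X\to M$ with $X_i=0$ for $i>\xpd(M)$, and Lemma~\ref{xhat}\eqref{xhatitem02} certifies that it is $\catx$-proper, so $\xaext^n(M,-)=\HH_{-n}(\ahom(X,-))=0$ whenever $n>\xpd(M)$. Under hypothesis~(1) I plan to show directly that every bounded $\catx$-resolution $X'\to M$ with $X'_i=0$ for $i>\xpd(M)$ is automatically $\catx$-proper. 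Writing $K'_i$ for its kernels and noting $K'_{\xpd(M)}=X'_{\xpd(M)}\in\catx$, I argue by downward induction on $i$ that $\catx\perp K'_i$: the base case is immediate from $\catx\perp\catx$, and the inductive step applies the first half of Lemma~\ref{perp03}\eqref{perp03item2} to $0\to K'_{i+1}\to X'_i\to K'_i\to 0$, combining $\catx\perp K'_{i+1}$ with $\catx\perp X'_i$ to obtain $\catx\perp K'_i$. The vanishing $\aext^1(\catx,K'_{i+1})=0$ forces each such sequence to be $\ahom(\catx,-)$-exact, and splicing delivers a proper $\catx$-resolution of $M$ of length at most $\xpd(M)$; the desired vanishing of $\xaext^n(M,-)$ then follows as in hypothesis~(2).

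The main bookkeeping obstacle is checking at each step that the intermediate syzygies $K_i$ and $K'_i$ actually lie in $\proprescatx$, so that the relative Ext functors and their long exact sequences are defined and the dimension-shifting of Lemma~\ref{dimshift01} can be iterated legally; both properties follow from the fact that truncations of proper $\catx$-resolutions remain proper. Once this is in place, parts~(c) and~(d) are obtained by reading the above arguments in the opposite category, replacing resolutions with coresolutions and kernels with cokernels and invoking the ``co'' halves of Lemmas~\ref{perp03}, \ref{xhat}, \ref{xhatcor}, \ref{notation06a}, and~\ref{dimshift01}.
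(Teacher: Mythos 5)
Your proposal is correct and follows essentially the same route as the paper's proof. In part (a) you iterate Lemma~\ref{dimshift01}\eqref{dimshift01item2} along the syzygy sequences of a proper $\catx$-resolution to reduce to the base case $\xaext^1(K_n,-)=0$, then split the approximation of $K_n$ exactly as the paper does for $M_{n-1}$; in part (b) you invoke Lemmas~\ref{xhatcor} and~\ref{xhat} under hypothesis~(2) just as the paper does, and under hypothesis~(1) you simply unfold the proof of Lemma~\ref{xhat}\eqref{xhatitem01} (with $\catw=\catx$) rather than citing it, which is the same argument one level of abstraction lower.
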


\begin{proof}
We prove parts~\eqref{detect00item2} and ~\eqref{detect00item1};  
the proofs of~\eqref{detect00item4} and~\eqref{detect00item3}
are dual.

\eqref{detect00item2}
Let $X\to M$ be a proper $\catx$-resolution, and 
set  $M_j=\coker(\partial^X_{j+2})$ for each integer $j$.
Note $M_j\in\proprescatx$ and 
$M\cong M_{-1}$, and consider the exact sequences
\begin{equation} \label{exact01} \tag{$\ast_j$}
0\to M_j\to X_j\xra{\epsilon_j} M_{j-1}\to 0
\end{equation}
when $j\geq 0$,
which are $\ahom(\catx,-)$-exact.

Assume first $\xaext^{1}(M,-)=0$.
An application of Lemma~\ref{notation06a}\eqref{06aitem1}
to the sequence $(\ast_0)$ yields the following exact sequence
$$0\to\ahom(M,M_0)\to\ahom(M,X_0)\xra{\ahom(M,\epsilon_0)}
\ahom(M,M)\to 0.$$
Hence, there exists
$\phi\in\ahom(M,X_0)$ such that $\epsilon_0\phi=\id_M$.
It follows that $M$ is a direct summand of $X_0$, and so 
$M\in\catx$ because $\catx$ is closed under direct summands.

Now assume $\xaext^{n+1}(M,-)=0$.
Apply Lemma~\ref{dimshift01}\eqref{dimshift01item2} to each
sequence $(\ast_j)$ inductively to conclude $\xaext^{1}(M_{n-1},-)=0$.
The previous paragraph now implies $\ker(\partial_{n-1}^X)=M_{n-1}\in\catx$.
The conclusion
$\xpd(M)\leq n$ is now immediate.

\eqref{detect00item1}
Assume without loss of generality that $p=\xpd(M)$ is finite.
It suffices to show that $M$ admits a proper $\catx$-resolution
$X\to M$ such that $X_n=0$ when $n>p$.
If condition (1) holds, then Lemma~\ref{xhat}\eqref{xhatitem01}
implies that every $\catx$-resolution 
$X\to M$ such that $X_n=0$ for each $n>p$
is proper.
On the other hand, if condition (2) holds, then 
Lemmas~\ref{xhat}\eqref{xhatitem02}
and~\ref{xhatcor}\eqref{xhatitem05'} 
yield the desired conclusion.
\end{proof}

The rest of this section is devoted to the study of the following
comparison maps.

\begin{notationdefinition} \label{rel02a}
Let $M,N$ be objects in $\cata$. 
\begin{enumerate}[\rm(a)]
\item \label{rel02aitem1}
When $M$ admits a proper $\catw$-resolution 
$\gamma \colon W\to M$ and a proper $\catx$-resolution 
$\gamma'\colon X\to M$, let $\ol{\id_M}\colon W\to X$ be a quasiisomorphism
such that $\gamma=\gamma'\ol{\id_M}$,
as in 
Lemma~\ref{rel01}\eqref{rel01item1}, and set
$$
\xwacomp^n(M,N)=\HH_{-n}(\ahom(\ol{\id_M},N))\colon\xaext^n(M,N)\to\waext^n(M,N). 
$$
\item \label{rel02aitem2}
When $M$ admits a projective resolution 
$\gamma \colon P\to M$ and a proper $\catx$-resolution 
$\gamma' \colon X\to M$, let $\wt{\id_M}\colon P\to X$ be a quasiisomorphism
such that $\gamma=\gamma'\wt{\id_M}$,
as in 
Lemma~\ref{rel01}\eqref{rel01item1'}, and set
$$
\xaacomp^n(M,N)=\HH_{-n}(\ahom(\wt{\id_M},N))\colon\xaext^n(M,N)\to\aext^n(M,N). 
$$
\item \label{rel02aitem3}
When $N$ admits a proper $\caty$-coresolution 
$\delta \colon N\to Y$ and a proper $\catv$-coresolution 
$\delta'\colon N\to V$, let $\ol{\id_N}\colon Y\to V$ be a quasiisomorphism
such that $\delta'=\ol{\id_N}\delta$,
as in 
Lemma~\ref{rel01}\eqref{rel01item2}, and set
$$
\ayvcomp^n(M,N)=\HH_{-n}(\ahom(M,\ol{\id_N}))\colon\ayext^n(M,N)\to\avext^n(M,N). 
$$
\item \label{rel02aitem4}
When $N$ admits a proper $\caty$-coresolution 
$\delta \colon N\to Y$ and an injective resolution 
$\delta'\colon N\to I$, let $\wt{\id_N}\colon Y\to I$ be a quasiisomorphism
such that $\delta'=\wt{\id_N}\delta$,
as in 
Lemma~\ref{rel01}\eqref{rel01item2'}, and set
$$
\aaycomp^n(M,N)=\HH_{-n}(\ahom(M,\wt{\id_N}))\colon\ayext^n(M,N)\to\aext^n(M,N). 
$$
\end{enumerate}
\end{notationdefinition}

\begin{disc}
Lemma~\ref{rel01} shows that Definition/Notation~\ref{rel02a} describes
well-defined natural transformations
that are independent of resolutions and liftings.
\begin{align*}
\xwacomp ^n(-,-)&\colon\xaext^n(-,-)|_{(\proprescatw\cap \proprescatx)\times\cata}\to 
\waext^n(-,-)|_{(\proprescatw\cap \proprescatx)\times\cata} \\
\xaacomp ^n(-,-)&\colon\xaext^n(-,-)|_{(\proprescatp\cap \proprescatx)\times\cata}\to 
\aext^n(-,-)|_{(\proprescatp\cap \proprescatx)\times\cata} \\
\ayvcomp ^n(-,-)&\colon\ayext^n(-,-)|_{\cata \times(\propcorescatv\cap \propcorescaty)}\to 
\avext^n(-,-)|_{\cata \times(\propcorescatv\cap \propcorescaty)}\\
\aaycomp ^n(-,-)&\colon\ayext^n(-,-)|_{\cata \times(\propcorescati\cap \propcorescaty)}\to 
\aext^n(-,-)|_{\cata \times(\propcorescati\cap \propcorescaty)}
\end{align*}
\end{disc}

The next result compares to~\cite[(4.2.3)]{avramov:aratc}.

\begin{prop} \label{detect01}
Assume 
$\catx\perp\catw$ and 
$\catv\perp\caty$, and fix objects  $M\in\finrescatw$ and
$N\in\fincorescatv$.
\begin{enumerate}[\quad\rm(a)]
\item \label{detect01item1}
The following natural transormations are isomorphisms
for each $n$
$$\xwacomp^n(M,-)\colon\xaext^n(M,-)\xra{\cong}\waext^n(M,-).$$ 
\item \label{detect01item2}
The following natural transormations are isomorphisms
for each $n$
$$\ayvcomp^n(-,N)\colon\ayext^n(-,N)\xra{\cong}\avext^n(-,N).$$ 
\end{enumerate}
\end{prop}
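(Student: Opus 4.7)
The plan is to observe that, under the orthogonality hypotheses, a single bounded (co)resolution serves simultaneously in both roles required by the comparison map, reducing the comparison to an identity map on cochain complexes.

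For part~\eqref{detect01item1}, since $M\in\finrescatw$ one has $\wpd(M)<\infty$, so $M$ admits a bounded $\catw$-resolution $\gamma\colon W\to M$. The standing hypothesis $\catx\perp\catw$ puts us in the setting of Lemma~\ref{xhat}\eqref{xhatitem01}, which guarantees that $W\to M$ is $\catx$-proper, and hence $\catw$-proper since $\catw\subseteq\catx$. Thus the single complex $W$ is at once a proper $\catw$-resolution \emph{and} a proper $\catx$-resolution of $M$, so it may be used for both roles in Definition/Notation~\ref{rel02a}\eqref{rel02aitem1}.

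With this choice, Lemma~\ref{rel01}\eqref{rel01item1} applied to $f=\id_M$ and $\gamma'=\gamma$ produces a morphism $\ol{\id_M}\colon W\to W$, unique up to homotopy, lifting $\id_M$. The identity $\id_W$ manifestly satisfies the required factorization, so $\ol{\id_M}$ is homotopic to $\id_W$. Consequently $\ahom(\ol{\id_M},N)$ is homotopic to the identity on $\ahom(W,N)$, and passing to $(-n)$th homology shows that $\xwacomp^n(M,N)$ is the identity on $\HH_{-n}(\ahom(W,N))$, hence an isomorphism. Part~\eqref{detect01item2} is entirely dual: $N\in\fincorescatv$ supplies a bounded $\catv$-coresolution $N\to V$, which by $\catv\perp\caty$ and Lemma~\ref{xhat}\eqref{xhatitem03} is both $\caty$-proper and $\catv$-proper; using $V$ in both roles of Definition/Notation~\ref{rel02a}\eqref{rel02aitem3} and taking the lift to be $\id_V$ makes $\ayvcomp^n(-,N)$ the identity.

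There is essentially no substantive obstacle: the point is simply to recognize that the orthogonality $\catx\perp\catw$ (respectively $\catv\perp\caty$) together with the inclusion $\catw\subseteq\catx$ (respectively $\catv\subseteq\caty$) is exactly what makes a bounded $\catw$-resolution automatically $\catx$-proper, so that no genuine comparison between two distinct resolutions is required. Once Lemma~\ref{xhat} is invoked, well-definedness of $\xaext^n$ and $\waext^n$ (independence of the chosen proper resolutions) supplies the conclusion.
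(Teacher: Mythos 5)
Your proof is correct and follows essentially the same route as the paper's: take a bounded $\catw$-resolution $W\to M$, use Lemma~\ref{xhat}\eqref{xhatitem01} to see that $W$ is simultaneously $\catx$-proper and $\catw$-proper, and observe that the comparison lift may be taken to be $\id_W$, which makes $\xwacomp^n(M,-)$ the identity. Your extra remarks on uniqueness up to homotopy merely unpack what the paper compresses into ``we can take $\ol{\id_M}=\id_W$.''
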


\begin{proof}
We prove part~\eqref{detect01item1};  the proof of~\eqref{detect01item2}
is dual.

Let $W\to M$ be a bounded $\catw$-resolution.
Lemma~\ref{xhat}\eqref{xhatitem01}
implies that $W$ is  $\catx$-proper and $\catw$-proper,
so $\waext^n(M,-)$ and $\xaext^n(M,-)$ are defined.  Further,
in the notation of Definition~\ref{rel02a}\eqref{rel02aitem1}, we can take
$\ol{\id_M}=\id_W$, and so there are equalities
$$\xwacomp^n(M,-)=\HH_{-n}(\ahom(\ol{\id_M},-))=\HH_{-n}(\ahom(\id_W,-))
=\id_{\HH_{-n}(\ahom(W,-))}$$
which establish the desired result.
\end{proof}

The next lemma is a tool for the proofs of Propositions~\ref{detect01a} 
and~\ref{detect01b}.  Note that we do not assume that the complexes satisfy
any properness conditions.

\begin{lem} \label{quisos}
Let $M$ and $N$ be objects in $\cata$, and assume $\catx\perp\catw$
and $\catv\perp\caty$.
\begin{enumerate}[\quad\rm(a)]
\item \label{quisositem1}
Let  $\alpha\colon X\to X'$ be a quasiisomorphism between
bounded below complexes in $\catx$.
If $\wpd(N)<\infty$,
then the morphism $\ahom(\alpha,N)\colon\ahom(X',N)\to\ahom(X,N)$
is a quasiisomorphism.
\item \label{quisositem2}
Let  $\beta\colon Y\to Y'$ be a quasiisomorphism between
bounded above complexes in $\caty$.
If $\vid(M)<\infty$,
then the morphism $\ahom(M,\beta)\colon\ahom(M,Y)\to\ahom(M,Y')$
is a quasiisomorphism.
\end{enumerate}
\end{lem}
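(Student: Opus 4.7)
The plan is to reduce each part of the lemma to a statement about exact complexes by passing to the mapping cone. For part~\eqref{quisositem1}, the morphism $\alpha$ is a quasiisomorphism if and only if $C:=\cone(\alpha)$ is exact, and one has a natural identification $\ahom(\cone(\alpha),N)\cong \shift^{-1}\cone(\ahom(\alpha,N))$, so $\ahom(\alpha,N)$ is a quasiisomorphism precisely when $\ahom(C,N)$ is exact. Since $C_n=X'_n\oplus X_{n-1}$ lies in $\catx$ (using only that $\catx$ is additive) and $C$ is bounded below, it suffices to establish the following claim: \emph{if $C$ is an exact, bounded below $\cata$-complex with $C_i\in\catx$ for every $i$ and $N\in\finrescatw$, then $\ahom(C,N)$ is exact}.

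I would prove the claim by induction on $p=\wpd(N)$. In the base case $p=0$, so $N\in\catw$, the hypothesis $\catx\perp\catw$ gives $\aext^{\geq1}(C_i,N)=0$ for every $i$. Setting $Z_i=\ker(\partial_i^C)=\im(\partial_{i+1}^C)$ and noting $Z_i=0$ for $i\ll 0$ by the bounded below hypothesis, the exactness of $C$ produces short exact sequences $0\to Z_i\to C_i\to Z_{i-1}\to 0$. An induction on $i$ through the long exact sequence in $\aext(-,N)$ then yields $\aext^{\geq1}(Z_i,N)=0$ for every $i$, so each of these sequences remains short exact after applying $\ahom(-,N)$; splicing them together recovers the complex $\ahom(C,N)$ and shows it is exact.

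For the inductive step $p\geq 1$, I would pick any bounded $\catw$-resolution of $N$, let $K$ be the image of its first differential, and obtain a short exact sequence $0\to K\to W_0\to N\to 0$ with $W_0\in\catw$ and $\wpd(K)\leq p-1$. Lemma~\ref{gencat01} yields $\catx\perp\finrescatw$, so $\aext^{\geq1}(C_i,K)=0$ and the sequence stays exact after applying $\ahom(C_i,-)$ for each $i$. This produces a short exact sequence of complexes
\begin{equation*}
0\to\ahom(C,K)\to\ahom(C,W_0)\to\ahom(C,N)\to 0
\end{equation*}
whose outer terms are exact by the inductive hypothesis, and the associated long exact sequence in homology then forces $\ahom(C,N)$ to be exact as well. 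Part~\eqref{quisositem2} is handled dually, by passing to $\cone(\beta)$ and inducting on $\vid(M)$, using the hypothesis $\catv\perp\caty$ and the dual containment $\fincorescatv\perp\caty$ from Lemma~\ref{gencat01}.

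The main technical obstacle is simply bookkeeping: verifying the cone-Hom identification with its degree shift and sign conventions, and ensuring that the splicing of the short exact sequences in the base case really reproduces the differential of $\ahom(C,N)$. Everything else is a routine dimension-shift argument, made possible by the fact that the bounded below hypothesis on $C$ grounds the induction on syzygies at $Z_i=0$.
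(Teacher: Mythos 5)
Your proof is correct and follows essentially the same route as the paper: pass to the mapping cone $C=\cone(\alpha)$, use $\cone(\ahom(\alpha,N))\cong\shift\ahom(C,N)$, and reduce to showing $\ahom(C,N)$ is exact via a syzygy dimension-shift on the kernels $Z_i$ of $C$. The one difference is that your secondary induction on $p=\wpd(N)$ is unnecessary: Lemma~\ref{gencat01} already upgrades $\catx\perp\catw$ to $\catx\perp\finrescatw$, so your base-case argument (replacing $N\in\catw$ by $N\in\finrescatw$ and the hypothesis $\catx\perp\catw$ by $\catx\perp N$) handles the general case in one pass, which is exactly what the paper does. The inductive step you add is sound but redundant.
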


\begin{proof}
We prove part~\eqref{quisositem1}; the proof of part~\eqref{quisositem2} is dual.

It suffices to show that  $\cone(\ahom(\alpha,N))$ is exact.
From the next isomorphism 
$$\cone(\ahom(\alpha,N))\cong\shift\ahom(\cone(\alpha),N)$$
we need to show that $\ahom(\cone(\alpha),N)$ is exact.  Note that
$\cone(\alpha)$ is an exact, bounded below complex in $\catx$.  
Set  $M_j=\ker(\partial^{\cone(\alpha)}_j)$ for each integer $j$, 
and note $M_{j-1}\in\catx$ for $j \ll 0$.
Consider the exact sequences
\begin{equation} \label{exact01'} \tag{$\ast_j$}
0\to M_j\to \cone(\alpha)_j\to M_{j-1}\to 0.
\end{equation}
The condition $\catx\perp\catw$ implies
$\catx\perp N$ by Lemma~\ref{gencat01}.
Hence, induction on $j$ using Lemma~\ref{perp03}\eqref{perp03item1} implies
$\aext^{\geq 1}(M_j,N)=0$ for each $j$ and so each sequence $(\ast_j)$ is
$\ahom(-,N)$-exact.
It follows that $\ahom(\cone(\alpha),N)$ is exact.
\end{proof}

The next two results compare to~\cite[(4.2.4)]{avramov:aratc}.
Note that Lemmas~\ref{xhatcor} and~\ref{contain01} provide conditions
implying $\finrescatx\subseteq\proprescatx\cap\proprescatw$ and 
$\fincorescaty\subseteq\propcorescaty\cap\propcorescatv$.

\begin{prop} \label{detect01a}
Let $M$ and $N$ be objects in $\cata$, and assume $\catx\perp\catw$
and $\catv\perp\caty$.
\begin{enumerate}[\quad\rm(a)]
\item \label{detect01aitem1}
If $M$ is in $\proprescatx\cap\proprescatw$ and 
$N$ is in $\finrescatw$, then 
the following natural map is an isomorphism for each $n$
$$\xwacomp^n(M,N)\colon\xaext^n(M,N)\xra{\cong}\waext^n(M,N).$$ 
\item \label{detect01aitem2}
If $M$ is in $\fincorescatv$ and 
$N$ is in $\propcorescaty\cap\propcorescatv$, then 
the following natural map is an isomorphism for each $n$
$$\ayvcomp^n(M,N)\colon\ayext^n(M,N)\xra{\cong}\avext^n(M,N).$$ 
\end{enumerate}
\end{prop}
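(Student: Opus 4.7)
By duality it suffices to establish part~\eqref{detect01aitem1}; part~\eqref{detect01aitem2} will follow by the dual argument using Lemma~\ref{quisos}\eqref{quisositem2} in place of Lemma~\ref{quisos}\eqref{quisositem1}. The plan is to show that the lifting map used to define $\xwacomp^n(M,N)$ is, after applying $\ahom(-,N)$, already a quasiisomorphism of complexes.

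Since $M$ lies in $\proprescatw\cap\proprescatx$, fix a proper $\catw$-resolution
$\gamma\colon W\to M$ and a proper $\catx$-resolution $\gamma'\colon X\to M$.
By Lemma~\ref{rel01}\eqref{rel01item1} applied to $f=\id_M$ (using that $\catw\subseteq\catx$, so $W$ can also be viewed as a complex in $\catx$), there exists a morphism
$\ol{\id_M}\colon W\to X$ with $\gamma'\ol{\id_M}=\gamma$, and $\ol{\id_M}$ is a quasiisomorphism. By Definition/Notation~\ref{rel02a}\eqref{rel02aitem1}, the comparison map $\xwacomp^n(M,N)$ is computed as $\HH_{-n}(\ahom(\ol{\id_M},N))$, so it suffices to show that $\ahom(\ol{\id_M},N)$ is a quasiisomorphism.

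Both $W$ and $X$ are bounded below complexes with entries in $\catx$ (using $\catw\subseteq\catx$ for the former), and $\ol{\id_M}\colon W\to X$ is a quasiisomorphism between them. Since $N\in\finrescatw$ one has $\wpd(N)<\infty$, so the hypotheses of Lemma~\ref{quisos}\eqref{quisositem1} are met (the assumption $\catx\perp\catw$ is in force). Applying that lemma yields that $\ahom(\ol{\id_M},N)\colon\ahom(X,N)\to\ahom(W,N)$ is a quasiisomorphism, which gives the claimed isomorphism $\xwacomp^n(M,N)\colon\xaext^n(M,N)\xra{\cong}\waext^n(M,N)$ for each $n$.

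The argument is essentially a bookkeeping exercise once Lemma~\ref{quisos} is in hand; the conceptual content was already captured there, namely that $\ahom(-,N)$ preserves quasiisomorphisms between bounded below complexes in $\catx$ as soon as $N$ has finite $\catw$-projective dimension. The only point that requires a moment of care is confirming that the hypothesis ``bounded below complexes in $\catx$'' applies to $W$; this uses the containment $\catw\subseteq\catx$ built into the setup of Notationdefinition~\ref{notation01a}. Were this containment not available, one would have to first replace $W$ by a proper $\catx$-resolution via Lemma~\ref{rel01}\eqref{rel01item1}, but in the present setting this detour is unnecessary.
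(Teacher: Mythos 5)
Your proposal is correct and follows essentially the same path as the paper's proof: take a proper $\catw$-resolution and a proper $\catx$-resolution of $M$, obtain the quasiisomorphism $\ol{\id_M}\colon W\to X$ via Lemma~\ref{rel01}\eqref{rel01item1}, and apply Lemma~\ref{quisos}\eqref{quisositem1} (using $\wpd(N)<\infty$ and $\catx\perp\catw$) to conclude $\ahom(\ol{\id_M},N)$ is a quasiisomorphism. The explicit observation that $W$ lives in $\catx$ because $\catw\subseteq\catx$ is the correct justification, just left implicit in the paper.
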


\begin{proof}
We prove part~\eqref{detect01aitem1}; the proof of part~\eqref{detect01aitem2}
is dual.  

The object
$M$ has a proper $\catw$-resolution 
$\gamma\colon W\to M$ and a proper $\catx$-resolution 
$\gamma'\colon X\to M$.
Lemma~\ref{rel01}\eqref{rel01item1}
yields a quasiisomorphism
$\ol{\id_M}\colon W\to X$ 
such that $\gamma=\gamma'\ol{\id_M}$,
and Lemma~\ref{quisos}\eqref{quisositem1} implies that the
morphism $\ahom(\ol{\id_M},N)$ is a quasiisomorphism.
The result now follows from the definition of
$\xwacomp^n(M,N)$.
\end{proof}

\begin{prop} \label{detect01b}
Let $M$ and $N$ be objects in $\cata$, and assume $\catx\perp\catw$
and $\catv\perp\caty$.
\begin{enumerate}[\quad\rm(a)]
\item \label{detect01bitem1}
If $M$ is in $\proprescatx\cap\proprescatp$ and 
$N$ is in $\finrescatw$, then 
the following natural map is an isomorphism for each $n$
$$\xaacomp^n(M,N)\colon\xaext^n(M,N)\xra{\cong}\aext^n(M,N).$$ 
\item \label{detect01bitem2}
If $M$ is in $\fincorescatv$ and 
$N$ is in $\propcorescaty\cap\propcorescati$, then 
the following natural map is an isomorphism for each $n$
$$\aaycomp^n(M,N)\colon\ayext^n(M,N)\xra{\cong}\aext^n(M,N).$$ 
\end{enumerate}
\end{prop}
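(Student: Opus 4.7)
The plan is to mirror the proof of Proposition~\ref{detect01a}, but with a projective resolution in place of the proper $\catw$-resolution and $\aext$ in place of $\waext$. For part~\eqref{detect01bitem1}, I would use the hypothesis $M\in\proprescatx\cap\proprescatp$ to pick a projective resolution $\gamma\colon P\to M$ and a proper $\catx$-resolution $\gamma'\colon X\to M$; Lemma~\ref{rel01}\eqref{rel01item1'} then supplies a quasiisomorphism $\wt{\id_M}\colon P\to X$ with $\gamma'\wt{\id_M}=\gamma$. Unwinding the definition of $\xaacomp^n(M,N)$, the problem reduces to showing that the induced morphism $\ahom(\wt{\id_M},N)\colon\ahom(X,N)\to\ahom(P,N)$ is a quasiisomorphism.

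The main step is an analog of Lemma~\ref{quisos}\eqref{quisositem1} in which the quasiisomorphism connects bounded below complexes whose terms lie in $\catx\cup\catp$ rather than in $\catx$ alone. I would check that the proof of that lemma goes through verbatim once one notices that, under our hypotheses, every object of $\catx\cup\catp$ satisfies $(-)\perp N$: for objects in $\catp$ this is automatic, and for objects in $\catx$ it follows by combining $\catx\perp\catw$ with $N\in\finrescatw$ via Lemma~\ref{gencat01}. Concretely, set $C=\cone(\wt{\id_M})$; it is an exact bounded below complex with $C_n=X_n\oplus P_{n-1}\in\catx\cup\catp$. Writing $M_j=\ker(\partial^C_j)$, an induction starting from $j\ll 0$ and using Lemma~\ref{perp03}\eqref{perp03item1} on the short exact sequences $0\to M_j\to C_j\to M_{j-1}\to 0$ yields $\aext^{\geq 1}(M_j,N)=0$ for every $j$, so each of these sequences is $\ahom(-,N)$-exact. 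Hence $\ahom(C,N)$ is exact, and the isomorphism $\cone(\ahom(\wt{\id_M},N))\cong\shift\ahom(C,N)$ identifies $\ahom(\wt{\id_M},N)$ as a quasiisomorphism.

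I would prove part~\eqref{detect01bitem2} dually: choose a proper $\caty$-coresolution $\delta\colon N\to Y$ and an injective resolution $\delta'\colon N\to I$, apply Lemma~\ref{rel01}\eqref{rel01item2'} to obtain a quasiisomorphism $\wt{\id_N}\colon Y\to I$ with $\wt{\id_N}\delta=\delta'$, and run the dual form of the argument above with the roles of $\catx$ and $\catp$ played by $\caty$ and $\cati$, using $\catv\perp\caty$ together with $M\in\fincorescatv$ to supply the required orthogonality with $M$. This yields that $\ahom(M,\wt{\id_N})$ is a quasiisomorphism, so $\aaycomp^n(M,N)$ is an isomorphism by its definition.

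The only real obstacle is the minor extension of Lemma~\ref{quisos} just described: its statement requires both sides of the quasiisomorphism to lie in a single subcategory satisfying $\perp N$, whereas here one endpoint is a projective (respectively, injective) resolution. This is handled by the observation that the proof of Lemma~\ref{quisos} uses the ambient subcategory only through its orthogonality with $N$, a property shared by $\catx$ and $\catp$ (and dually by $\caty$ and $\cati$) in our setting.
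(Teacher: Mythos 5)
Your approach matches the paper's: it argues exactly as in Proposition~\ref{detect01a} but with Lemma~\ref{quisos}\eqref{quisositem1} applied to a category combining $\catx$ and $\catp$. One small imprecision: $C_n=X_n\oplus P_{n-1}$ need not lie in $\catx\cup\catp$; the paper instead introduces the additive closure $\catx\oplus\catp$ (objects of the form $X\oplus P$ with $X\in\catx$, $P\in\catp$), which is what your argument actually requires --- the needed vanishing $\aext^{\geq 1}(X_n\oplus P_{n-1},N)=0$ then follows by additivity of $\aext$ from $\catx\perp N$ and $\catp\perp N$, just as you observe.
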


\begin{proof}
Argue as in the proof of Proposition~\ref{detect01a}.
When invoking Lemma~\ref{quisos}\eqref{quisositem1}, use the category
$\catx\oplus\catp$ whose objects are precisely 
those of the of the form $X\oplus P$
for some $X\in \catx$ and $P\in\catp$.
\end{proof}

The next two lemmata are tools for Proposition~\ref{detect01a'}
and Theorem~\ref{balance02}.  

\begin{lem} \label{balance01}
Let $\catw$ be a cogenerator for $\catx$ and 
let $\catv$ be a generator for $\caty$.  
\begin{enumerate}[\quad\rm(a)]
\item \label{balance01item1}
If $\catw\perp(\catw\cup\caty)$ and 
$\waext^{\geq 1}(\finrescatw,\catv)=0$,
then $\waext^{\geq 1}(\finrescatw,\caty)=0$.
\item \label{balance01item2}
If $(\catx\cup\catv)\perp\catv$ and 
$\avext^{\geq 1}(\catw,\fincorescatv)=0$,
then $\avext^{\geq 1}(\catx,\fincorescatv)=0$.
\end{enumerate}
\end{lem}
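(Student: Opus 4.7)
The plan is to prove part (a) by combining two vanishing results for $\waext^\bullet(M,-)$: first, the unconditional upper-end vanishing $\waext^n(M,-)=0$ for $n>\wpd(M)$, and second, an iterated dimension shift against a generator resolution of $\caty$ by $\catv$ that propagates this vanishing down to every $n\geq 1$. Part~(b) will be the strict dual, iterating a cogenerator coresolution of $\catx$ by $\catw$ against the second variable.

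For (a), fix $M\in\finrescatw$ with $p=\wpd(M)$ and $Y\in\caty$. The hypothesis $\catw\perp(\catw\cup\caty)$ includes $\catw\perp\catw$, so Lemma~\ref{xhat}(a) applied with $\catx$ specialised to $\catw$ shows that every bounded $\catw$-resolution of $M$ is $\catw$-proper; in particular $M\in\proprescatw$, so $\waext^\bullet(M,-)$ is defined. The same specialisation makes Proposition~\ref{detect00}(b) available under its condition~(1), yielding $\waext^n(M,-)=0$ for all $n>p$. This is the required upper-end vanishing.

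Now iterate the generator property of $\catv$ for $\caty$ to produce an exact sequence $\cdots\to V_1\to V_0\to Y\to 0$ with each $V_i\in\catv$ and each syzygy $Y_i\in\caty$ (taking $Y_0=Y$). Each short exact piece
\begin{equation*}
0 \to Y_{i+1}\to V_i\to Y_i\to 0
\end{equation*}
is $\ahom(\catw,-)$-exact because $\catw\perp\caty$ forces $\aext^1(\catw,Y_{i+1})=0$. Applying the long exact sequence of Lemma~\ref{notation06a}(a) (with $\catx=\catw$), the hypothesis $\waext^{\geq 1}(\finrescatw,\catv)=0$ annihilates the middle terms $\waext^n(M,V_i)$ for $n\geq 1$, producing injections $\waext^n(M,Y_i)\hookrightarrow\waext^{n+1}(M,Y_{i+1})$. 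Composing $k$ of these yields $\waext^n(M,Y)\hookrightarrow\waext^{n+k}(M,Y_k)$; choosing $k$ so that $n+k>p$ collapses the target to zero by the previous paragraph, and (a) follows.

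Part (b) is the precise dual. Fix $X\in\catx$ and $N\in\fincorescatv$ with $q=\vid(N)$. Use Lemma~\ref{xhat}(c) and Proposition~\ref{detect00}(d) with $\caty$ specialised to $\catv$ (via $\catv\perp\catv$) to define $\avext^\bullet(-,N)$ and to force $\avext^n(-,N)=0$ for $n>q$. Iterate the cogenerator property of $\catw$ for $\catx$ to build short exact sequences $0\to X_i\to W_i\to X_{i+1}\to 0$ with $X_0=X$, each $\ahom(-,\catv)$-exact via $\catx\perp\catv$. Then Lemma~\ref{notation06a}(c), together with the hypothesis $\avext^{\geq 1}(\catw,\fincorescatv)=0$, gives injections $\avext^n(X_i,N)\hookrightarrow\avext^{n+1}(X_{i+1},N)$ for $n\geq 1$, and iterating to $k$ with $n+k>q$ again annihilates the target. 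The main delicate point throughout is checking that Lemmata~\ref{xhat} and~\ref{notation06a} and Proposition~\ref{detect00} all apply under the non-standard specialisations $\catx=\catw$ (resp.\ $\caty=\catv$); no individual step is deep, but the perpendicularity bookkeeping must be handled carefully.
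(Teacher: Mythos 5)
Your argument is correct and matches the paper's proof in essentials: both build a $\catv$-resolution of $Y$ by iterating the generator property, observe that each short exact piece is $\ahom(\catw,-)$-exact via $\catw\perp\caty$, and combine the vanishing hypothesis against $\catv$ with the dimension-shift (you via injections from Lemma~\ref{notation06a}(a), the paper via the isomorphisms of Lemma~\ref{dimshift01}(a) — equivalent here since the target vanishes) to push into degrees past $\wpd(M)$, where Proposition~\ref{detect00} kills everything. Your explicit checks that the specialisations $\catx=\catw$ and $\caty=\catv$ satisfy the ambient perpendicularity hypotheses are the right ones and are complete.
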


\begin{proof}
We prove part~\eqref{balance01item1};  part~\eqref{balance01item2} is proved dually.  
Fix objects $M$ in $\finrescatw$ and $Y$ in $\caty$, and set $Y_0=Y$.
Because $\catv$ is a generator for $\caty$ there exist exact sequences
$$0\to Y_{n+1}\to V_n\to Y_n\to0$$
with $V_n$ in $\catv$ and $Y_{n+1}$ in $\caty$.  The assumption
$\catw\perp\caty$ implies that each of these sequences is $\ahom(\catw,-)$-exact
by Lemma~\ref{perp03}\eqref{perp03item2}.
Fix an integer $j\geq 1$ and set $p=\wpd(M)$.
The vanishing hypothesis implies $\waext^{\geq 1}(M,V_n)=0$ for each $n$,
and so Lemma~\ref{dimshift01}\eqref{dimshift01item1} inductively
yields the isomorphism in the following sequence
$$\waext^j(M,Y)=\waext^j(M,Y_0)\cong\waext^{j+p}(M,Y_p)=0 $$
where the last equality is from Proposition~\ref{detect00}\eqref{detect00item1}
because $\catw\perp\catw$.
\end{proof}

\begin{lem} \label{quisos'}
Assume 
that $\catw$ is a cogenerator for $\catx$ and
$\catv$ is a generator for $\caty$.
Let $M$ and $N$ be objects in $\cata$
with $\wpd(M)<\infty$ and
$\vid(N)<\infty$.
\begin{enumerate}[\quad\rm(a)]
\item \label{quisos'item1}
Assume $(\catx\cup\catv)\perp\catv$ and 
$\avext^{\geq 1}(\catw,\fincorescatv)=0$.
If  $\alpha\colon X\xra{\simeq} X'$ is a quasiisomorphism between
bounded below complexes in $\catx$,
then the morphism $\ahom(\alpha,N)\colon\ahom(X',N)\to\ahom(X,N)$
is a quasiisomorphism.
\item \label{quisos'item2}
Assume $\catw\perp(\catw\cup\caty)$ and 
$\waext^{\geq 1}(\finrescatw,\catv)=0$.
If  $\beta\colon Y\xra{\simeq} Y'$ is a quasiisomorphism between
bounded above complexes in $\caty$, then
the morphism $\ahom(M,\beta)\colon\ahom(M,Y)\to\ahom(M,Y')$
is a quasiisomorphism.
\end{enumerate}
\end{lem}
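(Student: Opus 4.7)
The proof follows the strategy of Lemma~\ref{quisos} with one extra reduction step. I sketch part~\eqref{quisos'item1}; part~\eqref{quisos'item2} is entirely dual. As in the proof of Lemma~\ref{quisos}\eqref{quisositem1}, one first reduces to showing that $\ahom(C, N)$ is exact, where $C = \cone(\alpha)$ is an exact, bounded-below complex with entries in $\catx$. In the earlier proof, the decisive ingredient was the orthogonality $\catx \perp N$; the main task here is to establish that same orthogonality under the new hypotheses, for then the syzygy induction of Lemma~\ref{quisos}\eqref{quisositem1} transfers verbatim.

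To produce $\catx \perp N$ for $N \in \fincorescatv$, I would first exploit $(\catx\cup\catv)\perp\catv$, which in particular gives $\catv \perp \catv$, and hence $\fincorescatv \perp \catv$ via Lemma~\ref{gencat01}. An argument parallel to the proof of Lemma~\ref{xhat}\eqref{xhatitem01} (with $\catv$ playing the role of both $\catx$ and $\catw$) then shows that every bounded $\catv$-coresolution is $\catv$-proper, so $\fincorescatv \subseteq \propcorescatv$ and $\avext(-, N)$ is well-defined for $N \in \fincorescatv$. Lemma~\ref{balance01}\eqref{balance01item2} then applies and yields $\avext^{\geq 1}(\catx, \fincorescatv) = 0$.

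Next, fix $X \in \catx$ and $N \in \fincorescatv$ together with a bounded proper $\catv$-coresolution $V$. The orthogonality $\catx \perp \catv$ gives $\aext^{\geq 1}(X, V_q) = 0$ for every term $V_q$, so $V$ is $\aext(X, -)$-acyclic. Iteratively applying the long exact sequence of $\aext(X, -)$ to the short exact sequences $0 \to Z^k \to V_{-k} \to Z^{k+1} \to 0$, where $Z^k = \ker(V_{-k} \to V_{-k-1})$ are the syzygies of the augmented coresolution (with $Z^0 \cong N$ and $Z^m \cong V_{-m}$ as the base case), yields the standard identification $\aext^n(X, N) \cong \HH_{-n}(\ahom(X, V)) = \avext^n(X, N)$ for all $n \geq 0$. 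Combined with the vanishing of $\avext^{\geq 1}(X, N)$ from the previous step, this produces $\catx \perp N$.

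The main obstacle is precisely this dimension-shifting identification of $\aext^n(X, N)$ with $\avext^n(X, N)$: although routine in principle, it requires careful bookkeeping of the iterated long exact sequences along the $\catv$-coresolution. Once $\catx \perp N$ is in place, the syzygy induction of Lemma~\ref{quisos}\eqref{quisositem1} applied to $C = \cone(\alpha)$ delivers exactness of $\ahom(C, N)$, completing part~\eqref{quisos'item1}. Part~\eqref{quisos'item2} is argued dually, with the roles of $\catw, \catx$ and $\catv, \caty$ swapped and Lemma~\ref{balance01}\eqref{balance01item1} invoked in place of Lemma~\ref{balance01}\eqref{balance01item2}.
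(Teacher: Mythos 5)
Your proposal is correct, but it follows a genuinely different route from the paper. The paper works entirely inside the relative theory: after splitting the cone of $\alpha$ into the syzygy sequences $(\ast_j)$, it uses Lemma~\ref{balance01}\eqref{balance01item2} to get $\avext^{\geq1}(\cone(\alpha)_j,N)=0$ for all $j$ and $\avext^{\geq1}(M_j,N)=0$ for $j\ll0$, first checks that each $(\ast_j)$ is $\ahom(-,\catv)$-exact (via $\catx\perp\catv$ and Lemma~\ref{perp03}\eqref{perp03item1}), and then propagates the relative vanishing along $j$ via the connecting isomorphisms of Lemma~\ref{dimshift01}\eqref{dimshift01item3}; the conclusion that each $(\ast_j)$ is $\ahom(-,N)$-exact is read off from the resulting long exact sequence in $\avext(-,N)$. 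You instead upgrade the relative vanishing $\avext^{\geq1}(\catx,N)=0$ to the \emph{absolute} orthogonality $\catx\perp N$ via the observation that a bounded (proper) $\catv$-coresolution of $N$ computes $\aext^*(X,N)$ for $X\in\catx$, because $\catx\perp\catv$. Once $\catx\perp N$ is in hand, you literally re-run the syzygy induction from Lemma~\ref{quisos}\eqref{quisositem1}. What this buys you is a cleaner reduction to an already-proven lemma (the paper essentially reproves that induction in the relative theory), and also a stronger byproduct: $\catx\perp\fincorescatv$ in the absolute sense, which the paper never makes explicit. The cost is the extra dimension-shifting identification $\aext^n(X,N)\cong\HH_{-n}(\ahom(X,V))$, which you correctly flag as the delicate step; it is standard once one notes $\aext^{\geq1}(X,V_q)=0$ for every term $V_q$ and tracks the cosyzygies carefully. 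One small housekeeping point worth spelling out in a final write-up: the lemma's general hypotheses include only that $\catv$ is a generator for $\caty$, so to know $N\in\propcorescatv$ and hence that $\avext(-,N)$ is defined, one needs precisely the specialization of Lemma~\ref{xhat}\eqref{xhatitem03} with $\caty$ replaced by $\catv$ (using $\catv\perp\catv$ from $(\catx\cup\catv)\perp\catv$), which you do observe but which is easy to elide.
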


\begin{proof}
We prove part~\eqref{quisos'item1}; the proof of part~\eqref{quisos'item2} is dual.

Set $M_j=\ker(\partial_j^{\cone(\alpha)})$ for each $j$, and note 
$M_j\in\catx$ for $j\ll 0$.
As in the proof of Lemma~\ref{quisos},
it suffices to show that each of the following exact sequences
\begin{equation} \label{exacts} \tag{$\ast_j$}
0\to M_j\to \cone(\alpha)_j\to M_{j-1}\to 0.
\end{equation}
is $\ahom(-,N)$-exact.  
The condition $\catx\perp\catv$ implies $M_j\perp\catv$
for $j\ll 0$ and $\cone(\alpha)_j\perp\catv$
for all $j\in\zz$.
Applying Lemma~\ref{perp03}\eqref{perp03item1}
to the sequences $(\ast_j)$
inductively implies $M_j\perp\catv$ for all $j\in\zz$ and so each 
$(\ast_j)$ is $\ahom(-,\catv)$-exact.

Lemma~\ref{balance01}\eqref{balance01item2} implies
$\avext^{\geq 1}(M_j,N)=0$ for $j\ll 0$ and 
$\avext^{\geq 1}(\cone(\alpha)_j,N)=0$ for all $j\in\zz$.
Applying
Lemma~\ref{dimshift01}\eqref{dimshift01item3} to $(\ast_j)$ inductively
yields $\avext^{\geq 1}(M_j,N)=0$
for all $n\in\zz$.  Thus, each sequence $(\ast_j)$ is $\ahom(-,N)$-exact,
as desired.
\end{proof}

The next result  is proved like Proposition~\ref{detect01a}, 
using Lemma~\ref{quisos'} in place of~\ref{quisos}.

\begin{prop} \label{detect01a'}
Assume 
that $\catw$ is a cogenerator for $\catx$ and
$\catv$ is a generator for $\caty$.
Let $M$ and $N$ be objects in $\cata$.
\begin{enumerate}[\quad\rm(a)]
\item \label{detect01a'item1}
Assume $(\catx\cup\catv)\perp\catv$ and 
$\avext^{\geq 1}(\catw,\fincorescatv)=0$.
If $M$ is in $\proprescatx\cap\proprescatw$ and 
$N$ is in $\fincorescatv$, then 
the following map 
is an isomorphism for each $n$
$$\xwacomp^n(M,N)\colon\xaext^n(M,N)\xra{\cong}\waext^n(M,N).$$ 
\item \label{detect01a'item2}
Assume $\catw\perp(\catw\cup\caty)$ and 
$\waext^{\geq 1}(\finrescatw,\catv)=0$.
If $M$ is in $\finrescatw$ and 
$N$ is in $\propcorescaty\cap\propcorescatv$, then 
the following map 
is an isomorphism for each $n$
\begin{align*}
&&&&&&&&&&\hspace{3mm}&&&&\ayvcomp^n(M,N)\colon\ayext^n(M,N)\xra{\cong}\avext^n(M,N).
&&&&&&&&&\hspace{2mm}&&\qed 
\end{align*}
\end{enumerate}
\end{prop}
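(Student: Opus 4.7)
The plan is to mimic the proof of Proposition~\ref{detect01a} verbatim, substituting the stronger comparison tool Lemma~\ref{quisos'} for Lemma~\ref{quisos}. Since parts (a) and (b) are dual, I would only write out (a) and assert that (b) follows dually.

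First I would use the hypothesis $M\in\proprescatx\cap\proprescatw$ to fix a proper $\catw$-resolution $\gamma\colon W\to M$ and a proper $\catx$-resolution $\gamma'\colon X\to M$. Applying Lemma~\ref{rel01}\eqref{rel01item1} to $\id_M$ produces a quasiisomorphism $\ol{\id_M}\colon W\to X$ with $\gamma'\ol{\id_M}=\gamma$. This is precisely the lifting used in Definition/Notation~\ref{rel02a}\eqref{rel02aitem1} to build the comparison map $\xwacomp^n(M,N)=\HH_{-n}(\ahom(\ol{\id_M},N))$.

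Next I would invoke Lemma~\ref{quisos'}\eqref{quisos'item1} applied to the quasiisomorphism $\ol{\id_M}\colon W\to X$. Both $W$ and $X$ are bounded below complexes whose terms lie in $\catx$ (recall $\catw\subseteq\catx$), and the assumption $N\in\fincorescatv$ gives $\vid(N)<\infty$. Moreover, the two vanishing hypotheses appearing in Lemma~\ref{quisos'}\eqref{quisos'item1}, namely $(\catx\cup\catv)\perp\catv$ and $\avext^{\geq1}(\catw,\fincorescatv)=0$, are exactly the hypotheses of part~(a). So Lemma~\ref{quisos'}\eqref{quisos'item1} yields that the morphism $\ahom(\ol{\id_M},N)\colon\ahom(X,N)\to\ahom(W,N)$ is a quasiisomorphism, and passing to $\HH_{-n}$ shows that $\xwacomp^n(M,N)$ is an isomorphism for every $n$.

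The argument is really a bookkeeping exercise, so there is no substantive obstacle beyond checking that the hypotheses of Lemma~\ref{quisos'} are available; the genuine work has already been packaged into Lemmas~\ref{rel01}, \ref{balance01}, and \ref{quisos'}. Part~(b) is proved by the dual argument: one fixes a proper $\caty$-coresolution $\delta\colon N\to Y$ and a proper $\catv$-coresolution $\delta'\colon N\to V$, uses Lemma~\ref{rel01}\eqref{rel01item2} to obtain $\ol{\id_N}\colon Y\to V$, and then applies Lemma~\ref{quisos'}\eqref{quisos'item2} to conclude that $\ahom(M,\ol{\id_N})$ is a quasiisomorphism, which is exactly what is needed to show $\ayvcomp^n(M,N)$ is an isomorphism.
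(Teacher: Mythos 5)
Your proposal is correct and takes exactly the approach the paper indicates; the paper's own proof of this proposition is simply the remark that it ``is proved like Proposition~\ref{detect01a}, using Lemma~\ref{quisos'} in place of~\ref{quisos},'' and you have unpacked that directive faithfully, correctly matching the hypotheses $(\catx\cup\catv)\perp\catv$ and $\avext^{\geq1}(\catw,\fincorescatv)=0$ of part~(a) with those of Lemma~\ref{quisos'}\eqref{quisos'item1} and observing that $N\in\fincorescatv$ supplies $\vid(N)<\infty$.
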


\section{Relative Perfection} \label{sec3}

This section is concerned with a relative notion of perfection
akin to the Gorenstein perfection of~\cite{avramov:aratc},
the quasi-perfection of~\cite{foxby:qpmcmr}
and the generalized perfection of~\cite{golod:gdagpi}.
We begin with the relevant definitions.

\begin{defn} \label{tiltdef1}
Let $\catao$ be another abelian category with subcategory
$\catxo$ and let $T$ and $T^o$ be objects in $\catx$ and $\catxo$,
respectively.  The pair $(T,T^o)$ is a \emph{relative cotilting pair}
for the quadruple $(\cata,\catx,\catao,\catxo)$ when
the next conditions are satisfied:
\begin{enumerate}
\item The functor $\ahom(-,T)$ maps $\cata$ to $\catao$ and $\catx$ to $\catxo$.
\item The functor $\aohom(-,T^o)$ maps $\catao$ to $\cata$ and $\catxo$ to $\catx$.
\item 
There are natural isomorphisms 
$\aohom(\ahom(-,T),T^o)|_{\catx}\cong\id_{\catx}$ 
and $\ahom(\aohom(-,T^o),T)|_{\catxo}\cong\id_{\catxo}$.
\end{enumerate}
The term \emph{relative tilting pair}
is defined dually.
\end{defn}

\begin{defn} \label{perfectdef1}
Let $T$ be an object in $\cata$. An object $M$ in $\cata$ with
$g=\xpd(M)<\infty$ is \emph{$\catx T$-perfect of grade $g$} if $\aext^n(M,T)=0$
for each $n\neq g$.  The term \emph{$T\caty$-coperfect of cograde $g$}
is defined dually.
\end{defn}

Our motivating example comes from our categories of interest.

\begin{ex} \label{tiltex1}
If $R$ is noetherian and $C$ is a semidualizing $R$-module, then
the pair $(C,C)$ is a relative cotilting pair for 
$(\catm(R),\catgc(R),\catm(R),\catgc(R))$.\footnote{More generally, 
one may take $C$ to be a semidualizing $RS$-bimodule
as in~\cite{holm:fear} and conclude that the pair
$(_RC,C_S)$ is a relative cotilting pair for 
$(\catm(R),\catgc(R),\catm(S^o),\catgc(S^o))$.}
In this case, we write ``$\catgc$-perfect'' instead of 
``$\catgc(R)C$-perfect''.
The class of $\catgc$-perfect $R$-modules includes
the totally $C$-reflexive $R$-modules and the perfect $R$-modules.
When $C=R$, this notion recovers the G-perfect modules of~\cite[Sec.~6]{avramov:aratc}.
\end{ex}

Our main result on relative perfection establishes a duality between 
categories of relatively perfect objects.

\begin{prop} \label{perfect1}
Let $M$ be an object in $\cata$,
and let $\catao$ be an abelian category with subcategories $\catxo$ and $\catyo$.
\begin{enumerate}[\quad\rm(a)]
\item \label{perfect1item1}
Let $(T,T^o)$ be a relative cotilting pair for $(\cata,\catx,\catao,\catxo)$ such that
$\catx\perp T$ and $\catxo\perp T^o$.  Assume that $\cata$ and $\catao$ have
enough projectives.  If $M$ is $\catx T$-perfect of grade $g$, then 
$\aext^g(M,T)$ is an object of $\catao$ that is $\catxo T^o$-perfect of
grade $g$, and 
$\aoext^g(\aext^g(M,T),T^o)\cong M$.
\item \label{perfect1item2}
Let $(U,U^o)$ be a relative tilting pair for $(\cata,\caty,\catao,\catyo)$ such that
$U\perp \caty$ and $U^o\perp \catyo$, and assume that $\cata$ and $\catao$ have
enough injectives.  If $M$ is $U\caty$-coperfect of cograde $g$, then 
$\aext^g(U,M)$ is an object of $\catao$ that is $U^o\catyo$-coperfect of
cograde $g$, and $\aoext^g(U^o,\aext^g(U,M))\cong M$.
\end{enumerate}
\end{prop}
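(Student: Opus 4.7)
My strategy is to use the $\catx$-resolution of $M$ together with the cotilting isomorphisms to construct an explicit $\catxo$-resolution of $\aext^g(M,T)$, and then apply $\aohom(-,T^o)$ to recover $M$ from it.

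For part~\eqref{perfect1item1}, let $X\to M$ be a bounded $\catx$-resolution with $X_i=0$ for $i>g=\xpd(M)$. The assumption $\catx\perp T$ makes every object of $\catx$ acyclic for $\ahom(-,T)$, so truncating $X$ into the short exact sequences $0\to K_i\to X_{i-1}\to K_{i-1}\to 0$ (with $K_{-1}=M$) and applying the long exact sequence for $\aext(-,T)$ gives, by standard dimension shifting, $\HH^n(\ahom(X,T))\cong\aext^n(M,T)$ for every $n\geq 0$. Since $M$ is $\catx T$-perfect of grade $g$ this vanishes outside degree $g$, so
\[
0\to \ahom(X_0,T)\to\cdots\to\ahom(X_g,T)\to \aext^g(M,T)\to 0
\]
is exact. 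Cotilting condition~(1) places each $\ahom(X_i,T)$ in $\catxo$, so reindexing $Y_i=\ahom(X_{g-i},T)$ produces a $\catxo$-resolution $Y\to\aext^g(M,T)$ of length $g$, exhibiting $\aext^g(M,T)$ as an object of $\catao$ with $\xopd(\aext^g(M,T))\leq g$.

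Next I would apply $\aohom(-,T^o)$ to $Y$. Because $\catxo\perp T^o$, the same dimension shifting yields $\HH^n(\aohom(Y,T^o))\cong\aoext^n(\aext^g(M,T),T^o)$. Cotilting condition~(3) supplies natural isomorphisms $\aohom(Y_i,T^o)=\aohom(\ahom(X_{g-i},T),T^o)\cong X_{g-i}$ that are compatible with differentials, so $\aohom(Y,T^o)$ is the cochain complex with $X_{g-n}$ in degree $n$ and the original differentials of $X$. Its cohomology in degree $n$ equals $\HH_{g-n}(X)$, which is $M$ for $n=g$ and $0$ otherwise. Thus $\aoext^n(\aext^g(M,T),T^o)=0$ for $n\neq g$ and $\aoext^g(\aext^g(M,T),T^o)\cong M$. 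Since $M\neq 0$ forces this Ext to be nonzero, $\xopd(\aext^g(M,T))$ is exactly $g$, and $\aext^g(M,T)$ is $\catxo T^o$-perfect of grade $g$.

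Part~\eqref{perfect1item2} is entirely dual: I would use a bounded $\caty$-coresolution $M\to Y$ of length $g$, invoke $U\perp\caty$ to make $\caty$-objects $U$-acyclic and identify $\HH^n(\ahom(U,Y))\cong\aext^n(U,M)$, use the tilting property to obtain a $\catyo$-coresolution of $\aext^g(U,M)$, and then apply $\aohom(U^o,-)$ together with $U^o\perp\catyo$ and the tilting biduality to recover $M$ as $\aoext^g(U^o,\aext^g(U,M))$. The main technical obstacle in either part is justifying that the $\catx$-resolution computes $\aext^*(M,T)$ when $\catx$-objects need not be projective in $\cata$; the only input needed is $\catx\perp T$, and the argument is the standard induction via dimension shifts, but one must carefully verify that the biduality isomorphism of condition~(3) commutes with differentials after the reindexing $Y_i=\ahom(X_{g-i},T)$, which follows from its naturality.
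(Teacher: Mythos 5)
Your proposal is correct and follows essentially the same strategy as the paper's proof: take a bounded $\catx$-resolution $X\to M$ of length $g$, use $\catx\perp T$ to identify $\HH^n(\ahom(X,T))$ with $\aext^n(M,T)$, observe that perfection makes $\shift^g\ahom(X,T)$ into a length-$g$ $\catxo$-resolution of $\aext^g(M,T)$, then apply $\aohom(-,T^o)$ and the biduality of condition (3) to recover $X$ up to quasiisomorphism and hence $M$. The only cosmetic difference is that the paper packages the dimension-shift step as an application of its Lemma~\ref{quisos} (comparing $X$ with a projective resolution), whereas you carry out the same induction on syzygies by hand; the paper also dispatches the trivial case $M=0$ explicitly before invoking $M\neq 0$ at the end, a step worth stating even though it is immediate.
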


\begin{proof}
We prove part~\eqref{perfect1item1}; the proof of part~\eqref{perfect1item2}
is dual.  

The result is trivial if $M=0$, so assume $M\neq 0$.
Let $X\xra{\simeq}M$ be an $\catx$-resolution such that $X_n=0$ for each
$n>g=\xpd(M)$.  
By assumption, the complex $\ahom(X,T)$ consists of objects and morphisms
in $\catxo$.

As in the proof
of Proposition~\ref{detect01b}, Lemma~\ref{quisos}\eqref{quisositem1}
yields an isomorphism
$$\HH_{-n}(\ahom(X,T))\cong\aext^n(M,T)$$
for each $n$.  
Because $M$ is $\catx T$-perfect of grade $g$, 
we conclude that the complex $\shift^g\ahom(X,T)$
is an $\catxo$-resolution of $\aext^g(M,T)$ such that
$(\shift^g\ahom(X,T))_n=0$ for each $n>g$.  In particular,
the object $\aext^g(M,T)\cong\coker(\ahom(\partial^X_g,T))$
is in $\catao$ and $g^o=\xopd(\aext^g(M,T))\leq g<\infty$.

Similarly, we conclude that there is an isomorphism
$$\HH_{g-n}(\aohom(\ahom(X,T),T^o))\cong\aoext^n(\aext^g(M,T),T^o)$$
for each $n$.  Our assumptions yield the isomorphism in the next sequence
$$\aohom(\ahom(X,T),T^o)\cong X\simeq M$$
while the quasiisomorphism is by construction.  These displays imply
$$\aoext^n(\aext^g(M,T),T^o)\cong
\begin{cases} 0 & \text{if $n\neq g$} \\
M & \text{if $n= g$.} \end{cases}
$$ 
It remains to justify the equality $g^0= g$.
We already know $g^o\leq g$, so suppose
$g^o< g$.  Using Lemma~\ref{quisos}\eqref{quisositem1}
as above,
this would imply 
$\aoext^n(\aext^g(M,T),T^o)=0$ for each $n\geq g$.
In particular, we would have a contradiction from the next sequence
$$\hspace{37.5mm}
0=\aoext^g(\aext^g(M,T),T^o)\cong M. 
\hspace{30mm}\qedhere$$
\end{proof}

We conclude this section with the special case of Proposition~\ref{perfect1}
for our categories of interest.  The special case $C=R$ recovers~\cite[(6.3.1,2)]{avramov:aratc}.

\begin{cor} \label{perfect2}
Let $R$ be a commutative noetherian ring and $C,M$ finitely generated $R$-modules
with $C$ semidualizing and $\gkdim{C}_R(M)<\infty$.
\begin{enumerate}[\quad\rm(a)]
\item \label{perfectitem1}
There is an inequality $\grade_R(M)\leq\gkdim{C}_R(M)$,
and $M$ is $\text{G}_C$-perfect of grade $g$ if and only if
$\grade_R(M)=\gkdim{C}_R(M)=g$.
\item \label{perfectitem2}
If $M$ is $\text{G}_C$-perfect of grade $g$, then so is the $R$-module
$\ext^g_R(M,C)$, and there is an isomorphism
$M\cong\ext^g_R(\ext^g_R(M,C),C)$.
\end{enumerate}
\end{cor}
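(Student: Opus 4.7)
The plan is to deduce part~(b) directly from Proposition~\ref{perfect1}(a) and then to derive part~(a) from part~(b) together with a short dimension-shifting argument based on a $\catgc(R)$-resolution of $M$. For part~(b), I specialize Proposition~\ref{perfect1}(a) by taking $\cata=\catao=\catm(R)$, $\catx=\catxo=\catgc(R)$, and $T=T^o=C$. Example~\ref{tiltex1} identifies $(C,C)$ as a relative cotilting pair for this quadruple, the defining conditions on totally $C$-reflexive modules (recorded in Fact~\ref{gc}) supply $\catgc(R)\perp C$, and $\catm(R)$ has enough projectives; thus Proposition~\ref{perfect1}(a) yields both the $\text{G}_C$-perfection of $\ext^g_R(M,C)$ and the biduality isomorphism $M\cong\ext^g_R(\ext^g_R(M,C),C)$.

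For part~(a), I first fix a $\catgc(R)$-resolution $0\to G_g\to\cdots\to G_0\to M\to 0$ of length $g=\gkdim{C}_R(M)$. Splicing it into short exact sequences $0\to K_i\to G_i\to K_{i-1}\to 0$ with $K_{-1}=M$ and $K_g=G_g$, applying $\Hom_R(-,C)$, and invoking $\ext^{\geq 1}_R(G_i,C)=0$ inductively gives $\ext^n_R(M,C)=0$ for every $n>g$. Because $C$ is semidualizing one has the equality of depths $\grade(I,R)=\grade(I,C)$ for every ideal $I$, whence the Rees formula yields $\grade_R M=\inf\{n\geq 0\mid\ext^n_R(M,C)\neq 0\}$, and combining with the preceding vanishing establishes the inequality $\grade_R(M)\leq g$.

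For the ``if and only if'', suppose first that $M$ is $\text{G}_C$-perfect of grade $g$, so $\ext^n_R(M,C)=0$ for all $n\neq g$. If $\ext^g_R(M,C)$ vanished as well, then the biduality from part~(b) would force $M\cong\ext^g_R(\ext^g_R(M,C),C)=0$, incompatible with $\gkdim{C}_R M=g$; hence $\ext^g_R(M,C)\neq 0$ and $\grade_R M=g$. Conversely, if $\grade_R M=\gkdim{C}_R M=g$, then the definition of grade kills $\ext^n_R(M,C)$ for $n<g$ while the previous paragraph kills it for $n>g$, which is exactly $\text{G}_C$-perfection. The main technical point is the identification $\grade_R M=\inf\{n\mid\ext^n_R(M,C)\neq 0\}$: once the equality of depths for semidualizing $C$ is cited, the remainder is a routine combination of long exact sequences of $\ext_R(-,C)$ with the biduality supplied by part~(b).
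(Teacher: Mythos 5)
Your part~(b) is exactly the paper's proof: Proposition~\ref{perfect1}(a) applied via Example~\ref{tiltex1}, with the required orthogonality conditions coming from the definition of total $C$-reflexivity and $\catm(R)$ having enough projectives.

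Your part~(a), however, takes a genuinely different route. The paper invokes the identity $\gkdim{C}_R(M)=\sup\{n\ge 0\mid\ext^n_R(M,C)\neq 0\}$ from~\cite[(2.1)]{frankild:rrhffd}, which simultaneously packages the vanishing for $n>g$ and the nonvanishing at $n=g$; together with the Rees-formula identification $\grade_R(M)=\inf\{n\mid\ext^n_R(M,C)\neq 0\}$ (via $R$-regular $=$ $C$-regular), the entire part~(a) falls out of a single chain of (in)equalities, with no dependence on part~(b). You instead derive the vanishing $\ext^n_R(M,C)=0$ for $n>g$ by an elementary dimension-shift along a bounded $\catgc(R)$-resolution, and you supply the nonvanishing $\ext^g_R(M,C)\neq 0$ in the forward direction by appealing to the biduality isomorphism from part~(b). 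This is logically correct (there is no circularity, since part~(b) assumes the $\text{G}_C$-perfection you are given), and it has the virtue of not requiring the cited sup-formula. The trade-off is that it forces the logical ordering $(b)\Rightarrow(a)$, whereas the paper's parts~(a) and~(b) are independent, and your argument implicitly needs $M\neq 0$ (as does the paper's Proposition~\ref{perfect1}, which handles that case separately). Both arguments rely on the same Rees-formula step and on the $R$-regular $\Leftrightarrow$ $C$-regular fact for semidualizing $C$.
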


\begin{proof}
Part~\eqref{perfectitem1}
is established in the next sequence; the first  equality is by definition
\begin{align*}
\grade_R(M)
&=\depth_{\ann_R(M)}(R)\\
&=\depth_{\ann_R(M)}(C) \\
&=\inf\{n\geq 0\mid\ext^n_R(M,C)\neq 0\}\\
&\leq\sup\{n\geq 0\mid\ext^n_R(M,C)\neq 0\}\\
&=\gkdim{C}_R(M).
\end{align*}
The second equality follows from the fact that a sequence in $R$
is $R$-regular if and only if it is $C$-regular; see~\cite[p.\ 68]{golod:gdagpi}.  
The third equality is standard,
the inequality is trivial, and the last equality is in~\cite[(2.1)]{frankild:rrhffd}.

Part~\eqref{perfectitem2} follows immediately from Proposition~\ref{perfect1}\eqref{perfect1item1};
see Example~\ref{tiltex1}.
\end{proof}

\section{Balanced Properties for Relative Cohomology}\label{sec2}

\begin{defn} \label{balance00}
Fix subcategories $\catx'\subseteq \proprescatx$ and $\caty'\subseteq\propcorescaty$.
We say that $\xaext$ and $\ayext$ are \emph{balanced} on
$\catx'\times\caty'$ if the following condition holds:
For each object $M$ in $\catx'$ and $N$ in $\caty'$, if $X\to M$ is a proper
$\catx$-resolution, and $N\to Y$ a proper $\caty$-coresolution, then the induced
morphisms of complexes
$$
\ahom(M,Y)\to
\ahom(X,Y)\from
\ahom(X,N)
$$
are quasiisomorphisms.  
\end{defn}

\begin{disc}
Fix objects $M\in \catx'$ and $N\in \caty'$.
If
$\xaext$ and $\ayext$ are balanced on
$\catx'\times\caty'$, then
$\xaext^n(M,N)\cong\ayext^n(M,N)$
for all  and all $n\in\zz$.
\end{disc}

The next four lemmata are tools for the proof of the Main Theorem
of this paper.

\begin{lem} \label{perp02}
Assume 
$\catw\perp\catv$.
\begin{enumerate}[\quad\rm(a)]
\item \label{perp02item1}
If $\waext^{\geq1}(\finrescatw,\catv)=0$ and $\catw\perp\catw$, then
$\finrescatw\perp\catv$.
\item \label{perp02item2}
If $\avext^{\geq1}(\catw, \fincorescatv)=0$ and $\catv\perp\catv$, then
$\catw\perp\fincorescatv$.
\end{enumerate}
\end{lem}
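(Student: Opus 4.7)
I will prove part~(a); part~(b) follows by the obvious dualization, swapping resolutions for coresolutions and $\catw$ for $\catv$ throughout.

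The plan is to prove, by induction on $p = \wpd(M)$, that $\aext^{\geq 1}(M,V) = 0$ for each $V \in \catv$ and each $M \in \finrescatw$. The base case $p = 0$ places $M$ in $\catw$, where the desired vanishing is exactly the hypothesis $\catw \perp \catv$. For the inductive step, I will invoke Lemma~\ref{xhat}\eqref{xhatitem01} with $\catx = \catw$, which is legitimate because of the hypothesis $\catw \perp \catw$. This says that any bounded $\catw$-resolution is $\catw$-proper, so in particular $\finrescatw \subseteq \proprescatw$ and the first two steps of such a resolution of $M$ yield an $\ahom(\catw,-)$-exact short exact sequence
\begin{equation*}
\mathbf{L}\colon\quad 0 \to K \to W_0 \to M \to 0
\end{equation*}
with $W_0 \in \catw$ and $K \in \finrescatw$ satisfying $\wpd(K) \leq p - 1$. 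The inductive hypothesis furnishes $K \perp \catv$, while $\catw \perp \catv$ furnishes $W_0 \perp \catv$.

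The crux of the argument will be to upgrade $\mathbf{L}$ from $\ahom(\catw,-)$-exact to $\ahom(-,V)$-exact for each $V \in \catv$. To this end I apply Lemma~\ref{notation06a}\eqref{06aitem2} with $\catx = \catw$ and $N = V$: its hypotheses hold since $\mathbf{L}$ is $\ahom(\catw,-)$-exact and its three terms lie in $\finrescatw \subseteq \proprescatw$. Using the identification $\waext^0 = \ahom$ from Remark~\ref{rel03}, the initial segment of the resulting long exact sequence reads
\begin{equation*}
\ahom(M,V) \to \ahom(W_0,V) \to \ahom(K,V) \to \waext^1(M,V).
\end{equation*}
The hypothesis $\waext^{\geq 1}(\finrescatw,\catv) = 0$ annihilates the rightmost term, so $\ahom(W_0,V) \to \ahom(K,V)$ is surjective; that is, $\mathbf{L}$ is $\ahom(-,V)$-exact. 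Combined with $K \perp V$ and $W_0 \perp V$, the second statement of Lemma~\ref{perp03}\eqref{perp03item1} now delivers $M \perp V$, completing the induction.

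The main obstacle I expect is precisely the passage from the vanishing of the relative cohomology group $\waext^1(M,V)$ to the $\ahom(-,V)$-exactness of $\mathbf{L}$; once that step is secured through Lemma~\ref{notation06a}, the conclusion reduces to the standard perpendicularity calculus of Lemma~\ref{perp03}, and the induction runs cleanly.
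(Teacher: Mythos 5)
Your proof is correct and follows essentially the same path as the paper's: both induct on $\wpd(M)$, both take the syzygy sequence $0\to K\to W_0\to M\to 0$ (shown $\ahom(\catw,-)$-exact using $\catw\perp\catw$), and both use the long exact sequence in $\waext(-,V)$ plus the hypothesis $\waext^{\geq1}(\finrescatw,\catv)=0$ to extract the surjectivity of $\ahom(W_0,V)\to\ahom(K,V)$. The only cosmetic difference is the final bookkeeping: the paper dimension-shifts explicitly to kill $\aext^{\geq 2}(M,V)$ and then handles $\aext^1(M,V)$ by hand via the absolute long exact sequence, whereas you fold both steps into a single appeal to Lemma~\ref{perp03}\eqref{perp03item1} once you know the sequence is $\ahom(-,\catv)$-exact.
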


\begin{proof}
We prove part~\eqref{perp02item1};  part~\eqref{perp02item2}  is verified similarly.
Fix objects $M$ in $\finrescatw$ and $V$ in $\catv$ and set $n=\wpd(M)$.
We proceed by induction on $n$.
If $n=0$, then $\aext^{\geq 1}(M,V)=0$ since $\catw\perp\catv$.
So assume $n\geq 1$.
There exists an exact sequence
\begin{equation} \label{exseq03}
0\to M'\xra{\epsilon} W\to M\to 0
\end{equation}
such that $W$ is an object in $\catw$ and $\wpd(M')=n-1$.
The induction hypothesis implies $\aext^{\geq 1}(M',V)=0$.
Fix an integer $i\geq 1$.
Using the hypothesis
$\catw\perp\catv$, a standard dimension-shifting argument yields
$0=\aext^i(M',V)\cong \aext^{i+1}(M,V)$, so it remains to show
$\aext^{1}(M,V)=0$.

By Lemma~\ref{gencat01} we know $\catw\perp\catw$ implies 
$\catw\perp\finrescatw$.  Hence, the sequence~\eqref{exseq03}
is $\ahom(\catw,-)$-exact by Lemma~\ref{perp03}\eqref{perp03item2}.  
By assumption, we have
$\waext^{\geq1}(M,V)=0$
and so the long exact sequence in $\waext(-,V)$ associated to~\eqref{exseq03}
has the form
\begin{equation*}
0\to \ahom(M,V)\to \ahom(W,V)\xra{\ahom(\epsilon,V)} \ahom(M',V)\to  0.
\end{equation*}
Thus, the map $\ahom(\epsilon,V)$ is surjective.  
The assumption $\catw\perp\catv$ implies that the
long exact sequence in $\aext(-,V)$ associated to~\eqref{exseq03} starts as
\begin{equation*}
0\to \ahom(M,V)\to \ahom(W,V)\xra{\ahom(\epsilon,V)} \ahom(M',V)\to  \aext^1(M,V)\to 0.
\end{equation*}
Since $\ahom(\epsilon,V)$ is surjective, this implies $\aext^1(M,V)=0$
as desired.
\end{proof}

\begin{lem} \label{perp04}
Let $\catw$ be a cogenerator for $\catx$ and 
let $\catv$ be a generator for $\caty$.  Assume that
$\catx$ and $\caty$ are closed under extensions.
\begin{enumerate}[\quad\rm(a)]
\item \label{perp04item1}
If $\catw\perp\catw$ and $\catx\perp\catv$ and 
$\waext^{\geq 1}(\finrescatw,\catv)=0$,
then $\finrescatx\perp\catv$.
\item \label{perp04item2}
If $\catv\perp\catv$ and $\catw\perp\caty$ and
$\avext^{\geq 1}(\catw,\fincorescatv)=0$,
then $\catw\perp\fincorescaty$.
\end{enumerate}
\end{lem}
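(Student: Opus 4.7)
The plan is to prove part~\eqref{perp04item1}; part~\eqref{perp04item2} will follow by dualizing every step. Fix an object $M \in \finrescatx$ and an object $V \in \catv$; the goal is to show $\aext^{\geq 1}(M,V) = 0$.

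First I would upgrade the given vanishing assumption so that it controls all of $\finrescatw$. By Notation/Definition~\ref{notation01a} we have $\catw \subseteq \catx$, so the hypothesis $\catx \perp \catv$ already gives $\catw \perp \catv$. Combined with the assumed $\catw \perp \catw$ and $\waext^{\geq 1}(\finrescatw, \catv) = 0$, this puts us in position to invoke Lemma~\ref{perp02}\eqref{perp02item1}, which yields $\finrescatw \perp \catv$.

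Next I would apply Auslander--Buchweitz approximation in the correct direction. Because $\catx$ is closed under extensions and $\catw$ is a cogenerator for $\catx$, Lemma~\ref{xhatcor}\eqref{xhatitem05'} provides a $\catw\catx$-hull of $M$, i.e., a short exact sequence
$$0 \to M \to K \to X \to 0$$
with $K \in \finrescatw$ and $X \in \catx$. The previous step gives $K \perp \catv$, while $\catx \perp \catv$ gives $X \perp \catv$. For each $n \geq 1$, the associated long exact sequence in $\aext(-,V)$ contains the segment
$$\aext^n(X,V) \to \aext^n(K,V) \to \aext^n(M,V) \to \aext^{n+1}(X,V),$$
whose outer terms vanish. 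Therefore $\aext^n(M,V) = 0$ for all $n \geq 1$, and since $V \in \catv$ was arbitrary, $\finrescatx \perp \catv$, as desired.

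The key choice — and really the only subtle point — is to use the $\catw\catx$-hull rather than the $\catw\catx$-approximation $0 \to K' \to X' \to M \to 0$. With the hull, $\aext^n(M,V)$ sits between two vanishing terms in the long exact sequence and is forced to be zero; with the approximation one would only realize $\aext^1(M,V)$ as a cokernel of $\ahom(X',V) \to \ahom(K',V)$, whose surjectivity is essentially equivalent to the conclusion $\aext^1(M,V) = 0$ itself. The remaining ingredients (Lemmas~\ref{perp02} and~\ref{xhatcor}) are already in place, so no additional machinery is required.
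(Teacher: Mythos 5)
Your proof is correct and follows essentially the same route as the paper: reduce to $\finrescatw \perp \catv$ via Lemma~\ref{perp02}\eqref{perp02item1}, form a $\catw\catx$-hull via Lemma~\ref{xhatcor}\eqref{xhatitem05'}, and then conclude by a long exact sequence (the paper packages this last step as an appeal to Lemma~\ref{perp03}\eqref{perp03item1}). Your remark on why the hull, rather than the approximation, is the right object to use is also accurate.
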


\begin{proof}
We prove part~\eqref{perp04item1}; the proof of part~\eqref{perp04item2}
is dual.
Fix an object $M\in\finrescatx$ and, using
Lemma~\ref{xhatcor}\eqref{xhatitem05'},  a $\catw\catx$-hull
$$0\to M\to K'\to X'\to 0.$$
Because $X'$ is in $\catx$, we have $X'\perp\catv$.
Lemma~\ref{perp02}\eqref{perp02item1}
implies $K'\perp\catv$ and so Lemma~\ref{perp03}\eqref{perp03item1} guarantees
$M\perp\catv$, as desired.
\end{proof}

Lemma~\ref{xhatcor} provides
the existence of the proper resolutions and coresolutions in the next two lemmata
which are the primary tools for proving the Main Theorem.

\begin{lem} \label{balance05}
Assume that $\catx$ and $\caty$ are closed under extensions, 
$\catw$ is an injective cogenerator for $\catx$, 
$\catv$ is a projective generator for $\caty$,
$\catw\perp\caty$ and $\catx\perp\catv$.
\begin{enumerate}[\quad\rm(a)]
\item \label{balance05item1}
Assume $\waext^{\geq1}(\finrescatw,\catv)=0$.
If $M$ is an object in $\finrescatx$ with proper $\catx$-resolution
$X\to M$,
then $X^+$ is $\ahom(-,\caty)$-exact.
\item \label{balance05item2}
Assume $\avext^{\geq1}(\catw, \fincorescatv)=0$.
If $N$ is an object in $\fincorescaty$ with proper $\caty$-coresolution
$N\to Y$,
then $^+Y$ is $\ahom(\catx,-)$-exact.
\end{enumerate}
\end{lem}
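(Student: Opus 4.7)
The plan is to prove part (a); part (b) is dual. I will first reduce to showing $\xaext^{\geq 1}(M,Y)=0$ for every $Y\in\caty$: for any proper $\catx$-resolution $X\to M$, the complex $\ahom(X^+,Y)$ has homology $\xaext^n(M,Y)$ in degree $-n$ for $n\geq 1$ (while degrees $0$ and $1$ are automatically exact since $X\to M$ is a quasiisomorphism), and this quantity is independent of the chosen proper resolution. So it suffices to work with the bounded strict $\catw\catx$-resolution guaranteed by Lemma~\ref{xhatcor}\eqref{xhatitem05'}, which is proper by Lemma~\ref{xhat}\eqref{xhatitem02}; write it as $0\to X_p\to\cdots\to X_1\to X_0$ with $X_0\in\catx$ and $X_i\in\catw$ for $i\geq 1$.

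Next I would gather three auxiliary perpendicularity facts: $\waext^{\geq 1}(\finrescatw,\caty)=0$, via Lemma~\ref{balance01}\eqref{balance01item1} (noting that $\catw\perp\catw$ follows from $\catx\perp\catw$); then $\finrescatw\perp\caty$, via a verbatim repetition of the proof of Lemma~\ref{perp02}\eqref{perp02item1} with $\catv$ replaced by $\caty$ (the argument only uses the three inputs $\catw\perp\caty$, $\catw\perp\catw$, and the vanishing just derived); and finally $\finrescatx\perp\catv$, directly from Lemma~\ref{perp04}\eqref{perp04item1}.

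Then I would decompose $X^+$ into short exact sequences $0\to K_j\to X_j\to K_{j-1}\to 0$ for $0\leq j\leq p$, where $K_{-1}=M$ and each $K_j\in\finrescatw$ for $j\geq 0$ (a further syzygy of $K_0$ inside the $\catw$-part of the resolution). Splicing reduces the desired $\ahom(-,Y)$-exactness of $\ahom(X^+,Y)$ to the $\ahom(-,Y)$-exactness of each of these short exact sequences. For $j\geq 1$ this is immediate: all three terms lie in $\catw\cup\finrescatw$ and are thus perpendicular to $Y$, so Lemma~\ref{perp03}\eqref{perp03item1} applies.

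The main obstacle is the approximation $0\to K_0\to X_0\to M\to 0$, where $X_0\in\catx$ is not obviously perpendicular to $Y$. I plan to use one step of a $\catv$-resolution of $Y$: given $\alpha\colon K_0\to Y$, choose a short exact sequence $0\to Y_1\to V_0\to Y\to 0$ with $V_0\in\catv$ and $Y_1\in\caty$, which exists because $\catv$ generates $\caty$. Since $K_0\in\finrescatw\perp\caty$, one has $\aext^1(K_0,Y_1)=0$, so $\alpha$ lifts through $V_0\twoheadrightarrow Y$ to $\tilde\alpha\colon K_0\to V_0$. Since $\finrescatx\perp\catv$ yields $\aext^1(M,V_0)=0$, the long exact sequence of $\ahom(-,V_0)$ applied to the approximation shows that $\ahom(X_0,V_0)\to\ahom(K_0,V_0)$ is surjective, so $\tilde\alpha$ extends to $\beta\colon X_0\to V_0$. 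The composition $(V_0\twoheadrightarrow Y)\circ\beta\colon X_0\to Y$ restricts to $\alpha$ on $K_0$, proving that the approximation is $\ahom(-,Y)$-exact. The hard part is precisely this last step: bridging the gap between the perpendicularity available against $\catv$ and the one needed against $\caty$, which the two-step lifting accomplishes.
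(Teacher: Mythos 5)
Your proof is correct and follows essentially the same path as the paper's: reduce to a strict $\catw\catx$-resolution, splice $X^+$ into syzygy sequences, and close the approximation step via a two-stage lift through a $\catv$-presentation $0\to Y_1\to V_0\to Y\to 0$ of $Y$, using a vanishing of $\ext^1$ (against $Y_1$) and then $\finrescatx\perp\catv$ from Lemma~\ref{perp04}. The paper instead runs that lifting argument uniformly for all $j\geq 0$ and invokes $\waext^1(M_n,Y')=0$ (Lemma~\ref{balance01}) for the first lift rather than the absolute $\aext^1(K_0,Y_1)=0$, so it never needs the auxiliary fact $\finrescatw\perp\caty$ that you establish by re-running the Lemma~\ref{perp02} induction; your shortcut for the sequences with $j\geq 1$ is a minor reorganization, not a different method.
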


\begin{proof}
We proof part~\eqref{balance05item1}; the proof of~\eqref{balance05item2} is dual.
Lemma~\ref{xhatcor}\eqref{xhatitem05'} yields
a strict $\catw\catx$-resolution
$X'\to M$, and Lemma~\ref{xhat}\eqref{xhatitem02} implies that this resolution
is $\catx$-proper.  Lemma~\ref{rel01}\eqref{rel01item1} shows that $X$ and $X'$ are homotopy
equivalent, so we may replace $X$ with $X'$ to assume that $X\to M$ 
is a strict $\catw\catx$-resolution.

Fix an object $Y\in\caty$.
For each $n$, set $M_n=\coker(\partial_{n+2}^X)$,
noting  $M_{-1}\cong M$.  When $n\geq 0$, we have
 $\wpd(M_n)<\infty$ 
and we consider the exact sequences
\begin{equation} \label{exseq01}
0\to M_{n}\xra{\gamma_n} X_{n}\to M_{n-1}\to 0.
\end{equation}
It suffices to show that each of these sequences is
$\ahom(-,Y)$-exact, that is, that the map 
$\ahom(\gamma_n,Y)\colon\ahom(X_{n},Y)\to\ahom(M_{n},Y)$
is surjective.
Since $\catv$ is a generator for $\caty$ and $Y$ is 
in $\caty$, there is an exact sequence
\begin{equation} \label{exseq02}
0\to Y'\to V\xra{\tau}Y\to 0
\end{equation}
such that $Y'$ is an object in $\caty$ and $V$ is an object in $\catv$.
The assumption $\catw\perp\caty$ implies that this sequence is
$\ahom(\catw,-)$-exact by Lemma~\ref{perp03}\eqref{perp03item2}.

Fix an element $\lambda\in\ahom(M_{n},Y)$.
The proof will be complete once we find $f\in\ahom(X_n,Y)$ such that
$\lambda=f\gamma_n$.
The following diagram is our guide
\begin{equation*}
\xymatrix{
&& 0\ar[r] & M_n \ar[r]^{\gamma_n}\ar[d]_>>>>>>{\lambda}\ar@{-->}[ld]_{\sigma} 
& X_n \ar[r]\ar@{-->}[lld]_<<<<<<<{\mathrm{\delta}}\ar@{-->}[ld]^<<<<{f} 
& M_{n-1} \ar[r] & 0 \\
0\ar[r] & Y'\ar[r] & V\ar[r]^{\tau} & Y \ar[r] & 0
}
\end{equation*}
wherein the top row is~\eqref{exseq01} and the bottom 
row is~\eqref{exseq02}.

Since~\eqref{exseq02} is $\ahom(\catw,-)$-exact, it yields a 
long exact sequence in $\waext(M_n,-)$ by Lemma~\ref{notation06a}\eqref{06aitem1}. 
From Lemma~\ref{balance01}\eqref{balance01item1}
we conclude $\waext^1(M_n,Y')=0$, so this long exact sequence 
begins as follows
$$0\to \ahom(M_n,Y')\to \ahom(M_n,V)\xra{\ahom(M_n,\tau)} \ahom(M_n,Y)\to 0.$$
Hence, there exists $\sigma\in\ahom(M_n,V)$ such that $\lambda=\tau\sigma$.

Lemma~\ref{perp04}\eqref{perp04item1}
implies $\aext^1(M_{n-1},V)=0$, so an application of $\aext(-,V)$
to the sequence~\eqref{exseq01} yields the next exact sequence
$$0\to \ahom(M_{n-1},V)\to \ahom(X_n,V)\xra{\ahom(\gamma_n,V)} \ahom(M_n,V)\to 0.$$
Hence, there exists $\delta\in\ahom(X_n,V)$ such that $\sigma=\delta\gamma_n$.
It follows that
$$(\tau\delta)\gamma_n=\tau\sigma=\lambda$$
and so $f=\tau\delta\in\ahom(X_n,V)$ has the desired property.
\end{proof}

\begin{lem} \label{balance04}
Assume that $\catx$ and $\caty$ are closed under extensions, 
$\catw$ is an injective cogenerator for $\catx$, 
$\catv$ is a projective generator for $\caty$,
$\catw\perp\caty$ and $\catx\perp\catv$.
\begin{enumerate}[\quad\rm(a)]
\item \label{balance04item1}
Let $M$ be an object in $\finrescatx$ with proper $\catx$-resolution
$\alpha\colon X\to M$.  If $Y'$ is a bounded above complex of objects in $\caty$
and $\waext^{\geq1}(\finrescatw,\catv)=0$,
then the induced map $\ahom(M,Y')\to\ahom(X,Y')$ is a quasiisomorphism.
\item \label{balance04item2}
Let $N$ be an object in $\fincorescaty$ with proper $\caty$-coresolution
$\alpha\colon N\to Y'$.  If $X'$ is a bounded below complex of objects in $\catx$
and $\avext^{\geq1}(\catw, \fincorescatv)=0$,
then the induced map $\ahom(X',N)\to\ahom(X',Y)$ is a quasiisomorphism.
\end{enumerate}
\end{lem}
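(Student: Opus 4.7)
The plan is to reduce part (a) to Lemma~\ref{balance05}(a), which provides the special case when the target is a single object in $\caty$. Since $\alpha\colon X\to M$ is a quasiisomorphism, the map $\ahom(\alpha,Y')$ is a quasiisomorphism if and only if $\ahom(\cone(\alpha),Y')$ is acyclic. Up to a shift (and signs), $\cone(\alpha)$ is the augmented resolution $X^+$ from Definition~\ref{notation03}, so the task reduces to showing that $\ahom(X^+,Y')$ is acyclic. By Lemma~\ref{balance05}(a), whose hypotheses match ours precisely, $X^+$ is $\ahom(-,\caty)$-exact, which gives this statement when $Y'=Y$ is a single object of $\caty$. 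Part~(b) will follow by the dual argument with Lemma~\ref{balance05}(b).

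To extend from single objects to bounded-above complexes, I would first make a boundedness reduction. Because $X^+$ vanishes in sufficiently negative degrees and $Y'$ vanishes in sufficiently positive degrees, for every fixed integer $m$ the group
\begin{equation*}
\ahom(X^+,Y')_m=\prod_{p}\ahom(X^+_p,Y'_{p+m})
\end{equation*}
is a \emph{finite} product. Consequently, for each $n$ the abelian group $\HH_n(\ahom(X^+,Y'))$ depends only on the terms $Y'_k$ for $k$ in a bounded interval. Hence, by passing to a brutal truncation of $Y'$ in sufficiently negative degrees, we may assume $Y'$ is a bounded complex with components in $\caty$.

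Next, I would induct on the number of nonzero terms of $Y'$. The base case, $Y'=\shift^k Y$ for a single object $Y\in\caty$, reduces up to shift to the statement that $\ahom(X^+,Y)$ is acyclic, which is Lemma~\ref{balance05}(a). For the inductive step, let $N$ be the largest index with $Y'_N\neq 0$, and let $Y''$ be the brutal truncation of $Y'$ in degrees strictly less than $N$. The natural short exact sequence of complexes
\begin{equation*}
0\to Y''\to Y'\to \shift^N Y'_N\to 0
\end{equation*}
is split in each homological degree, so applying $\ahom(X^+,-)$ yields a degreewise split, hence short exact, sequence of complexes of abelian groups. The outer two terms are acyclic by the base case and the inductive hypothesis, so the associated long exact sequence in homology forces the middle term to be acyclic as well. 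This completes the induction and establishes part (a); part~(b) is proved by the evident dualization, invoking Lemma~\ref{balance05}(b) and the dual brutal truncation of the bounded-below complex $X'$.

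The main subtlety is the bookkeeping that justifies the reduction to bounded $Y'$, in particular verifying that the Hom-complex $\ahom(X^+,Y')$ is indeed a finite product in each degree and that brutal truncations do not disturb the homology we want. Beyond that, the argument is a standard induction, and no further input is needed since the vanishing hypothesis $\waext^{\geq 1}(\finrescatw,\catv)=0$ is used only through its role in Lemma~\ref{balance05}(a).
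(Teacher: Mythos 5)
Your proof is correct and follows essentially the same route as the paper's: both reduce to showing $\ahom(X^+,Y')$ is acyclic via the mapping cone, and both use Lemma~\ref{balance05}(a) to get acyclicity of $\ahom(X^+,Y'_n)$ for each single term. The paper then invokes ``a standard argument'' to pass from single terms to the bounded-above complex $Y'$; your finiteness-of-products observation, brutal truncation, and induction on the number of nonzero terms is a careful and correct spelling-out of exactly that step.
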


\begin{proof}
We proof part~\eqref{balance04item1}; the proof of~\eqref{balance04item2} is dual.
Lemma~\ref{balance05}\eqref{balance05item1}
shows that the complex $\ahom(X^+,Y_n)$ is exact for each $n$, and
a standard argument demonstrates that
$\ahom(X^+,Y)$ is exact.
From the following isomorphisms of complexes
$$\cone(\ahom(\alpha,Y))\cong\shift\ahom(\cone(\alpha),Y)
\cong\shift\ahom(X^+,Y)\simeq 0$$
one concludes that $\ahom(\alpha,Y)$ 
is a quasiisomorphism.
\end{proof}

The next result contains the Main Theorem from the introduction.

\begin{thm} \label{balance02}
Assume that $\catx$ and $\caty$ are  closed under extensions,
$\catw$ is an injective cogenerator for $\catx$, 
$\catv$ is a projective generator for $\caty$,
$\catw\perp\caty$, $\catx\perp\catv$ and
$\waext^{\geq1}(\finrescatw,\catv)=0=\avext^{\geq1}(\catw, \fincorescatv)$.
Then $\xaext$ and $\ayext$ are balanced on $\finrescatx\times\fincorescaty$.
In particular,  there are isomorphisms
$\xaext^n(M,N)\cong\ayext^n(M,N)$
for all objects $M$ in $\finrescatx$ and $N$ in $\fincorescaty$ and for all $n\in\zz$.
\end{thm}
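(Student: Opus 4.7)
The plan is to read this essentially as a packaging statement: the two preceding lemmas~\ref{balance05} and~\ref{balance04} have already been engineered to produce exactly the two quasiisomorphisms required by Definition~\ref{balance00}. So I would organize the proof as a short deduction, with one preliminary step that identifies $\finrescatx \subseteq \proprescatx$ and $\fincorescaty \subseteq \propcorescaty$, and then a direct application of Lemma~\ref{balance04}.

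First I would fix $M \in \finrescatx$ and $N \in \fincorescaty$. Since $\catx$ is closed under extensions and $\catw$ is an injective cogenerator for $\catx$, Lemma~\ref{xhatcor}\eqref{xhatitem05} gives $\finrescatx \subseteq \proprescatx$, so $M$ admits a proper $\catx$-resolution $\alpha\colon X \to M$; dually, Lemma~\ref{xhatcor}\eqref{xhatitem06} gives $N$ a proper $\caty$-coresolution $\beta\colon N \to Y$. In particular both $\xaext^n(M,N)$ and $\ayext^n(M,N)$ are well defined by Definition/Notation~\ref{rel02}.

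Next, I would verify the two quasiisomorphisms in Definition~\ref{balance00}. For
$$\ahom(\alpha,Y)\colon \ahom(M,Y) \to \ahom(X,Y),$$
view $Y$ as a bounded above complex of objects of $\caty$ and apply Lemma~\ref{balance04}\eqref{balance04item1}; the hypotheses of that lemma, namely $\catx,\caty$ closed under extensions, $\catw$ an injective cogenerator for $\catx$, $\catv$ a projective generator for $\caty$, $\catw\perp\caty$, $\catx\perp\catv$, and the vanishing $\waext^{\geq 1}(\finrescatw,\catv) = 0$, are all present in the theorem's hypotheses. Symmetrically, for
$$\ahom(X,\beta)\colon \ahom(X,N) \to \ahom(X,Y),$$
view $X$ as a bounded below complex of objects of $\catx$ and apply Lemma~\ref{balance04}\eqref{balance04item2}, using the dual vanishing $\avext^{\geq 1}(\catw,\fincorescatv) = 0$. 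This establishes balance on $\finrescatx \times \fincorescaty$.

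Finally, the isomorphism $\xaext^n(M,N) \cong \ayext^n(M,N)$ for each $n \in \zz$ is obtained by taking $\HH_{-n}$ of the span $\ahom(M,Y) \to \ahom(X,Y) \from \ahom(X,N)$ of quasiisomorphisms, since $\HH_{-n}(\ahom(X,N)) = \xaext^n(M,N)$ and $\HH_{-n}(\ahom(M,Y)) = \ayext^n(M,N)$ by definition. Given that all the real work is done in Lemmas~\ref{balance05} and~\ref{balance04}, the only possible obstacle here is simply confirming that the hypothesis list of the theorem covers the hypothesis lists of both halves of Lemma~\ref{balance04}; a quick side-by-side check shows it does, so the proof reduces to a two-line invocation.
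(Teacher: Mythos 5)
Your proposal is correct and mirrors the paper's own proof: fix $M$ and $N$, obtain the proper (co)resolutions via Lemma~\ref{xhatcor}, and invoke Lemma~\ref{balance04}\eqref{balance04item1} and \eqref{balance04item2} to get the two quasiisomorphisms demanded by Definition~\ref{balance00}. The only cosmetic difference is that you spell out the passage to homology and the hypothesis-matching a bit more explicitly than the paper does.
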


\begin{proof}
Fix objects $M$ in $\finrescatx$ and $N\in \fincorescaty$.
Using Lemma~\ref{xhatcor}, we have a proper $\catx$-resolution
$\alpha\colon X\to M$ and a 
proper $\caty$-coresolution
$\beta\colon N\to Y$.  
Lemma~\ref{balance04} implies that the induced morphisms
$$
\ahom(M,Y)\xra{\ahom(\alpha,Y)}
\ahom(X,Y)\xla{\ahom(X,\beta)}
\ahom(X,N)
$$
are quasiisomorphisms, and hence the desired conclusion.  
\end{proof}

\begin{disc} \label{balance06}
Under the hypotheses of Theorem~\ref{balance02},
it follows almost immediately from Proposition~\ref{detect01} that
$\waext$ and $\avext$ are balanced on $\finrescatw\times\fincorescatv$.
This conclusion also follows from the weaker hypothesis
$\waext^{\geq1}(\finrescatw,\catv)=0=\avext^{\geq1}(\catw, \fincorescatv)$
using~\cite[(8.2.14)]{enochs:rha}.
\end{disc}

The next result  follows from Lemma~\ref{cogen01}
and Thoerem~\ref{balance02}.

\begin{cor} \label{balance03}
For $n=0,1,2,\ldots$, let $\catx_n$ and $\caty_n$ be subcategories of $\cata$
such that 
$\catx_n$ and $\caty_n$ are closed under extensions when $n\geq 1$.
Assume that $\catx_n$ is an injective cogenerator for $\catx_{n+1}$ and 
$\caty_n$ is a projective generator for $\caty_{n+1}$ for each $n\geq 0$.
Assume $\catx_n\perp\caty_0$ and $\catx_0\perp\caty_n$ for each $n\geq 0$.
If
$\ext^{\geq1}_{\catx_0\cata}(\operatorname{res}\comp{\catx_0},\caty_0)=0
=\ext^{\geq1}_{\cata\caty_0}(\catx_0, \operatorname{cores}\comp{\caty_0})$,
then
$\ext_{\catx_m\cata}$ and $\ext_{\cata\caty_n}$ are balanced on 
$\operatorname{res}\comp{\catx_m}\times
\operatorname{cores}\comp{\caty_n}$ for each $m,n\geq 0$. \qed
\end{cor}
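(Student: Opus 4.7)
The plan is to apply Theorem~\ref{balance02} directly, using Lemma~\ref{cogen01} to promote the adjacent-level cogenerator and generator hypotheses to the global versions that Theorem~\ref{balance02} requires. Fix $m,n\geq 0$ and set $\catx=\catx_m$, $\caty=\caty_n$, $\catw=\catx_0$, $\catv=\caty_0$; in this setup the conclusion of Theorem~\ref{balance02} is precisely the balance statement asserted by the corollary, so the task reduces to verifying that theorem's hypotheses.

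To invoke Lemma~\ref{cogen01}\eqref{cogen01item1}, I need $\catx_k$ to cogenerate $\catx_{k+1}$ for each $k\geq 0$ (part of the injective cogenerator hypothesis) and $\catx_k\perp\catx_0$ for each $k\geq 1$. The latter follows by telescoping along the inclusion chain $\catx_0\subseteq\catx_1\subseteq\cdots$, which is implicit in Notation~\ref{notation01a} since cogenerators there are subcategories: the injective cogenerator hypothesis gives $\catx_k\perp\catx_{k-1}$, and restricting along $\catx_0\subseteq\catx_{k-1}$ yields $\catx_k\perp\catx_0$. Lemma~\ref{cogen01}\eqref{cogen01item1} then asserts that $\catx_0$ is an injective cogenerator for $\catx_m$, and the dual argument via Lemma~\ref{cogen01}\eqref{cogen01item2} shows that $\caty_0$ is a projective generator for $\caty_n$. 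The remaining hypotheses of Theorem~\ref{balance02} read off directly from the corollary's assumptions: closure of $\catx_m$ and $\caty_n$ under extensions is stipulated for $m,n\geq 1$; $\catw\perp\caty$ and $\catx\perp\catv$ become the given $\catx_0\perp\caty_n$ and $\catx_m\perp\caty_0$; and $\waext^{\geq 1}(\finrescatw,\catv)=0=\avext^{\geq 1}(\catw,\fincorescatv)$ is exactly the stated Ext-vanishing hypothesis.

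An application of Theorem~\ref{balance02} then delivers the claim in the interior range $m,n\geq 1$. The boundary cases $m=0$ or $n=0$ follow by running the same argument with $\catx_1$ (respectively $\caty_1$) in place of $\catx_0$ (respectively $\caty_0$) and then applying Proposition~\ref{detect01} to identify the resulting relative cohomology functors on $\operatorname{res}\comp{\catx_0}$ (respectively $\operatorname{cores}\comp{\caty_0}$); the corner $m=n=0$ is already covered by Remark~\ref{balance06}. The only non-routine step in the whole proof is the telescoping extracting $\catx_k\perp\catx_0$ from the adjacent perpendicularity $\catx_k\perp\catx_{k-1}$; past this, the proof is a matter of matching the hypotheses of Theorem~\ref{balance02} against those of the corollary.
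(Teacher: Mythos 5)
Your proposal is correct and follows the route the paper intends: use Lemma~\ref{cogen01} to promote the adjacent-level injective cogenerator/projective generator hypotheses to the pairs $(\catx_0,\catx_m)$ and $(\caty_0,\caty_n)$, then feed these into Theorem~\ref{balance02} with $\catw=\catx_0$, $\catv=\caty_0$. Your extra care with the boundary cases $m=0$ or $n=0$ (where $\catx_0$, $\caty_0$ need not be closed under extensions, so Theorem~\ref{balance02} cannot be applied with $\catx=\catx_0$ directly) is a genuine gap in the paper's terse ``follows from'' that your argument via bounded $\catx_0$-resolutions (which are simultaneously proper $\catx_0$- and $\catx_1$-resolutions, as in the proof of Proposition~\ref{detect01}) correctly fills.
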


We conclude with
special cases of Theorem~\ref{balance02} for our categories of interest.

\begin{notation} \label{notn1}
We simplify our notation for certain relative cohomology functors
and for some of the connecting maps from Definition/Notation~\ref{rel02a}
\begin{align*}
\pcext^n(-,-)&=\ext^n_{\catpc(R) R}(-,-)
& \icext^n(-,-)&=\ext^n_{R\,\catic(R)}(-,-)  \\
\gpcext^n(-,-)&=\ext^n_{\catgpc(R) R}(-,-)
&\gicext^n(-,-)&=\ext^n_{R\,\catgic(R)}(-,-) \\
\gpext^n(-,-)&=\ext^n_{\catgp(R) R}(-,-)
&\giext^n(-,-)&=\ext^n_{R\,\catgi(R)}(-,-) \\
\pccomp^n&=\varkappa_{\catpc(R) R}^n
& \iccomp^n&= \varkappa_{R\,\catic(R)}^n. 
\end{align*}
\end{notation}

We now show how Theorem~\ref{balance02} recovers~\cite[(3.6)]{holm:gdf}.

\begin{cor} \label{holm1}
If $R$ is a commutative ring, then
$\gpext$ and $\giext$ are balanced on
$\finrescatgpr\times\fincorescatgir$.
\end{cor}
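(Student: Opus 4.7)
The plan is to obtain this corollary by specializing the Main Theorem (Theorem~\ref{balance02}) to the quadruple
\[ (\catx,\caty,\catw,\catv)=(\catgp(R),\catgi(R),\catp(R),\cati(R)). \]
The work therefore reduces to checking, one by one, that all six hypotheses of Theorem~\ref{balance02} hold in this setting.

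First, I would record the structural ingredients. The subcategories $\catgp(R)$ and $\catgi(R)$ are closed under extensions (standard for Gorenstein projectives and Gorenstein injectives), and Fact~\ref{ICPG} applied with $C=R$ gives that $\catp(R)=\catpc(R)$ is an injective cogenerator for $\catgp(R)=\catgpc(R)$; dually $\cati(R)$ is a projective generator for $\catgi(R)$. The orthogonality conditions $\catw\perp\caty$ and $\catx\perp\catv$ are immediate from the defining properties of projectives and injectives: $\aext^{\geq 1}_R(P,-)=0$ for every projective $P$, so $\catp(R)\perp\catgi(R)$, and dually $\aext^{\geq 1}_R(-,I)=0$ for every injective $I$, so $\catgp(R)\perp\cati(R)$.

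The last two hypotheses to verify are the vanishing conditions
\[ \waext^{\geq1}(\finrescatw,\catv)=0=\avext^{\geq1}(\catw,\fincorescatv). \]
For the first, fix $M\in\finrescatp(R)$ and $I\in\cati(R)$. A projective resolution of $M$ is in particular a proper $\catp(R)$-resolution, so the relative cohomology $\ext^n_{\catp(R) R}(M,I)$ coincides with the absolute $\aext^n_R(M,I)$; the latter vanishes for $n\geq 1$ since $I$ is injective. The second condition is dual: an injective coresolution of $N\in\fincorescat{I}(R)$ is a proper $\cati(R)$-coresolution, hence $\ext^n_{R\,\cati(R)}(P,N)=\aext^n_R(P,N)=0$ for $n\geq 1$ and $P\in\catp(R)$.

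With all hypotheses verified, Theorem~\ref{balance02} gives that $\gpext=\ext_{\catgp(R) R}$ and $\giext=\ext_{R\,\catgi(R)}$ are balanced on $\finrescatgpr\times\fincorescatgir$. There is no genuine obstacle here; the only care point is the identification of relative with absolute Ext in the two vanishing statements, which is where the choice $\catw=\catp(R)$, $\catv=\cati(R)$ (so that proper $\catw$-resolutions are ordinary projective resolutions, and dually for $\catv$-coresolutions) does the real work.
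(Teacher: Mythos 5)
Your proof is correct and follows essentially the same route as the paper: specialize Theorem~\ref{balance02} with $(\catx,\caty,\catw,\catv)=(\catgp(R),\catgi(R),\catp(R),\cati(R))$, use Fact~\ref{ICPG} with $C=R$, and reduce the two vanishing hypotheses to the identification of $\ext_{\catp(R)\,R}$ and $\ext_{R\,\cati(R)}$ with absolute $\ext_R$ (which is precisely the content of Remark~\ref{rel03} cited in the paper's proof).
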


\begin{proof}
Set
$\catx=\catgp(R)$, 
$\caty=\catgi(R)$,
$\catw=\catp(R)$ and
$\catv=\cati(R)$.
From~\cite[(2.5),(2.6)]{holm:ghd} we know that
$\catx$ and $\caty$ are  closed under extensions.
Fact~\ref{ICPG} implies that 
$\catw$ is an injective cogenerator for $\catx$ and
$\catv$ is a projective generator for $\caty$.
Clearly, we have
$\catw\perp\caty$ and $\catx\perp\catv$.
The natural isomorphisms
$$\ext^n_{\catp(R)\catm(R)}(-,-)\cong
\ext^n_{R}(-,-)\cong
\ext^n_{\catm(R)\cati(R)}(-,-)$$
from Remark~\ref{rel03} yield
$$\waext^{\geq1}(\finrescatw,\catv)=0=\avext^{\geq1}(\catw, \fincorescatv).$$
Hence, Theorem~\ref{balance02} yields the desired conclusion.
\end{proof}

The next lemmata are for use in Corollary~\ref{more04}.

\begin{lem} \label{tak01}
Let $R$ be a commutative ring and let $B$ and $B'$ be semidualizing
$R$-modules.  If $\tor^R_{\geq 1}(B,B')=0$, then
$\catpb(R)\perp\cati_{B'}(R)$.
\end{lem}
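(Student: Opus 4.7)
The plan is to fix a projective $R$-module $P$ and an injective $R$-module $I$, and to show that $\ext_R^n(P\otimes_R B,\Hom_R(B',I))=0$ for all $n\geq 1$. The strategy is to exploit Hom-tensor adjunction after resolving $B$ projectively.

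First I would choose a projective resolution $Q\xra{\simeq} B$, which exists by Definition~\ref{notation08a}(1). Since $P$ is projective, hence flat, the complex $P\otimes_R Q$ is exact in positive degrees, so $P\otimes_R Q\to P\otimes_R B$ is a projective resolution of $P\otimes_R B$. Consequently
$$\ext_R^n(P\otimes_R B,\Hom_R(B',I))\cong \HH^n\bigl(\Hom_R(P\otimes_R Q,\Hom_R(B',I))\bigr).$$

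Next I would apply Hom-tensor adjunction degreewise, obtaining a natural isomorphism of cochain complexes
$$\Hom_R(P\otimes_R Q,\Hom_R(B',I))\cong \Hom_R(P\otimes_R Q\otimes_R B',I).$$
Since $I$ is injective, the functor $\Hom_R(-,I)$ is exact, so the cohomology of the right-hand complex is computed from the homology of $P\otimes_R Q\otimes_R B'$ via
$$\HH^n\bigl(\Hom_R(P\otimes_R Q\otimes_R B',I)\bigr)\cong \Hom_R\bigl(\HH_n(P\otimes_R Q\otimes_R B'),I\bigr).$$

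Finally, using flatness of $P$ once more, we get $\HH_n(P\otimes_R Q\otimes_R B')\cong P\otimes_R\HH_n(Q\otimes_R B')\cong P\otimes_R\tor_n^R(B,B')$ for each $n$, which vanishes for $n\geq 1$ by the hypothesis $\tor_{\geq 1}^R(B,B')=0$. Stringing the isomorphisms together yields $\ext_R^n(P\otimes_R B,\Hom_R(B',I))=0$ for all $n\geq 1$, which is exactly $\catpb(R)\perp\catib(R)$ (with the correct subscript $B'$). There is no real obstacle here; the only thing to keep track of is which module is being resolved and where flatness of $P$ is invoked (once to get a resolution of $P\otimes_R B$, once to commute $P\otimes_R-$ past homology).
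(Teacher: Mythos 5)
Your proof is correct and is essentially the same as the paper's: the paper compresses the argument into the single citation "a standard form of adjunction using the fact that $P$ is projective and $I$ is injective," yielding $\ext^i_R(P\otimes_R B,\Hom_R(B',I))\cong\Hom_R(\tor_i^R(P\otimes_R B,B'),I)\cong\Hom_R(P\otimes_R\tor_i^R(B,B'),I)=0$, whereas you prove that adjunction isomorphism from scratch by resolving $B$, applying Hom-tensor adjunction degreewise, and commuting homology past $\Hom_R(-,I)$ and $P\otimes_R-$. The underlying chain of identifications and the two uses of flatness of $P$ are the same in both arguments.
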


\begin{proof}
Let $P$ be a projective $R$-module and $I$ an injective $R$-module.
For each $i\geq 1$, the first isomorphism in the following sequence is 
a standard form of adjunction using the fact that $P$ is projective and $I$ is injective
\begin{align*}
\ext^i_R(P\otimes_R B,\hom_R(B',I))
&\cong\hom_R(\tor_i^R(P\otimes_R B,B'),I)\\
&\cong\hom_R(P\otimes_R \tor_i^R(B,B'),I)\\
&=0.
\end{align*}
The second isomorphism follows from the fact that $P$ is projective,
and the
vanishing is by assumption.
\end{proof}

The next example shows how to construct semidualizing $R$-modules
satisfying the hypotheses of Lemma~\ref{tak01}.

\begin{ex} \label{more01}
Let $R$ be a commutative ring and let 
$B$ and $C$ be semidualizing $R$-modules.
One has $C\in\catbb(R)$ if and only if
$B\in\catgc(R)$ by~\cite[(3.14)]{sather:crct}.
Assume  $C\in\catbb(R)$.
From~\cite[(2.11)]{christensen:scatac},
we conclude that the $R$-module $\bdc=\hom_R(B,C)$ is semidualizing, 
and~\cite[(3.1.b)]{frankild:rrhffd} yields
$\bdc\in\catab(R)$ and $B\in\catabdc(R)$.
In particular, we conclude $\tor^R_{\geq 1}(B,\bdc)=0$.

For example, 
one always has $C\in\catb_R(R)=\catm(R)$.
If $R$ is Cohen-Macaulay
and $D$ is dualizing, then $D\in\catbc(R)$.
For discussions of methods for generating 
other semidualizing modules
$B$ and $C$ such that $C\in\catbb(R)$, 
see~\cite{frankild:rrhffd,frankild:sdcms,sather:divisor}.
\end{ex}

\begin{lem} \label{more02}
Let $R$ be a commutative ring and let $B$ and $C$ be semidualizing
$R$-modules such that $C\in\catbb(R)$. With $\bdc=\hom_R(B,C)$,
there are containments
$\finrescatpbr\subseteq\catbb(R)\cap\catabdc(R)\supseteq
\fincorescatibdcr$.
\end{lem}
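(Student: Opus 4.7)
The plan is to decompose the double containment into the four assertions
$\finrescatpbr \subseteq \catbb(R)$, $\finrescatpbr \subseteq \catabdc(R)$, $\fincorescatibdcr \subseteq \catbb(R)$, and $\fincorescatibdcr \subseteq \catabdc(R)$,
and handle them in turn. Two are essentially free: Fact~\ref{projac} applied with the semidualizing module $B$ gives $\finrescatpbr \subseteq \catbb(R)$, and the same fact applied with the semidualizing module $\bdc$ (which is semidualizing by Example~\ref{more01}) gives $\fincorescatibdcr \subseteq \catabdc(R)$.

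For $\finrescatpbr \subseteq \catabdc(R)$, first note $\catpb(R) \subseteq \catabdc(R)$: since $B \in \catabdc(R)$ by Example~\ref{more01} and Auslander classes are closed under arbitrary direct sums and direct summands, every module of the form $P \otimes_R B$ with $P$ projective lies in $\catabdc(R)$. The two-out-of-three property of $\catabdc(R)$ in short exact sequences (an instance of Fact~\ref{projac} applied with $\bdc$) then permits a standard induction on $\catpb(R)$-projective dimension via a short exact sequence $0 \to K \to P \otimes_R B \to M \to 0$ with $\catpb(R)$-$\pd(K)$ strictly smaller.

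The main obstacle is $\fincorescatibdcr \subseteq \catbb(R)$. By the analogous induction using the two-out-of-three property of $\catbb(R)$ (Fact~\ref{projac} applied with $B$), it suffices to show $\hom_R(\bdc, I) \in \catbb(R)$ for each injective $R$-module $I$. The Ext-vanishing $\ext^{i}_R(B, \hom_R(\bdc, I)) \cong \hom_R(\tor^R_i(B, \bdc), I) = 0$ for $i \geq 1$ is immediate from Hom-tensor adjunction combined with the vanishing $\tor^R_{\geq 1}(B, \bdc) = 0$ of Example~\ref{more01}. Using $B \otimes_R \bdc \cong C$ (which holds because the evaluation map is an isomorphism for $C \in \catbb(R)$), adjunction also yields $\hom_R(B, \hom_R(\bdc, I)) \cong \hom_R(C, I)$; the remaining two Bass-class conditions then translate to the Tor-vanishing $\tor^R_{\geq 1}(B, \hom_R(C, I)) = 0$ together with the evaluation isomorphism $B \otimes_R \hom_R(C, I) \cong \hom_R(\bdc, I)$.

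Both remaining conditions follow from a single tensor-evaluation argument exploiting the structure of a semidualizing module. Choose a resolution $F \to B$ by finite rank free $R$-modules, as in Definition~\ref{notation08a}; because $B \in \catgc(R)$ by Example~\ref{more01}, we have $\ext^{\geq 1}_R(B, C) = 0$, so applying $\hom_R(-, C)$ to $F$ produces a coresolution of $\bdc$ whose terms $\hom_R(F_n, C)$ are finite direct sums of copies of $C$. Applying the exact functor $\hom_R(-, I)$ with $I$ injective then yields an exact complex, and the canonical identification $\hom_R(\hom_R(F_n, C), I) \cong F_n \otimes_R \hom_R(C, I)$ (immediate because $F_n$ is finitely generated free) rewrites it as the exact sequence $\cdots \to F_1 \otimes_R \hom_R(C, I) \to F_0 \otimes_R \hom_R(C, I) \to \hom_R(\bdc, I) \to 0$. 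By right-exactness of $- \otimes_R \hom_R(C, I)$ this simultaneously exhibits $\hom_R(\bdc, I)$ as $B \otimes_R \hom_R(C, I)$ and witnesses the vanishing of $\tor^R_{\geq 1}(B, \hom_R(C, I))$, completing the verification.
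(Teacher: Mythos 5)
Your proof is correct, and for three of the four sub-containments you reason exactly as the paper does: $\finrescatpbr\subseteq\catbb(R)$ and $\fincorescatibdcr\subseteq\catabdc(R)$ come straight from Fact~\ref{projac} (applied with $B$, respectively $\bdc$, in the role of the semidualizing module), and $\finrescatpbr\subseteq\catabdc(R)$ follows from $B\in\catabdc(R)$, closure of Auslander classes under direct sums and summands, and the two-out-of-three property. The only divergence is in $\fincorescatibdcr\subseteq\catbb(R)$. The paper disposes of this with the single word ``dual,'' which implicitly invokes the Auslander--Bass duality under $\hom_R(-,I)$ (for $I$ injective) to pass from $\bdc\in\catab(R)$ to $\catibdc(R)\subseteq\catbb(R)$, and then again uses two-out-of-three. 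You instead verify the three Bass-class axioms for $\hom_R(\bdc,I)$ from scratch: the Ext-vanishing via the Ext--Tor adjunction and $\tor^R_{\geq1}(B,\bdc)=0$, and the remaining two conditions via the identification $B\otimes_R\bdc\cong C$ (forced by $C\in\catbb(R)$) together with a tensor-evaluation computation along a finite free resolution of $B$. Both routes are valid; yours trades the black-box duality citation for a longer but self-contained and more elementary calculation, which is a reasonable thing to do since the paper never states that duality explicitly.
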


\begin{proof}
We verify the first containment; the second one is dual.
Fact~\ref{projac} implies $\finrescatpbr\subseteq\catbb(R)$. 
From Example~\ref{more01}, we have $B\in\catabdc(R)$,
and this readily implies
$\catpb(R)\subseteq\catabdc(R)$.
Fact~\ref{projac} then yields 
$\finrescatpbr\subseteq\catabdc(R)$.
\end{proof}

\begin{lem} \label{more03}
Let $R$ be a commutative ring and let $B$ and $C$ be semidualizing
$R$-modules such that $C\in\catbb(R)$. If $\bdc=\hom_R(B,C)$,
then $\pbext$ and $\ibdcext$
are balanced on $\finrescatpbr\times\fincorescatibdcr$.
\end{lem}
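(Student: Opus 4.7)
The strategy is to apply Theorem~\ref{balance02} with $\catx = \catw = \catpb(R)$ and $\caty = \catv = \catibdc(R)$; this choice collapses the ``cogenerator'' and ``generator'' conditions into pure self-perpendicularity statements.

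I would first collect the three perpendicularity inputs. Applying Fact~\ref{ICPG} to the semidualizing module $B$ shows that $\catpb(R)$ is an injective cogenerator for $\catgpb(R) \supseteq \catpb(R)$, whence $\catpb(R) \perp \catpb(R)$; applied dually to the semidualizing module $\bdc$ (which is semidualizing by Example~\ref{more01}) it gives $\catibdc(R) \perp \catibdc(R)$. For the cross-perpendicularity, Example~\ref{more01} yields $B \in \catabdc(R)$, so in particular $\tor^R_{\geq 1}(B, \bdc) = 0$; Lemma~\ref{tak01} then delivers $\catpb(R) \perp \catibdc(R)$. The self-perpendicularities force every short exact sequence with endpoints in $\catpb(R)$ (resp.\ $\catibdc(R)$) to split, so both categories are closed under extensions; each category is tautologically a cogenerator (resp.\ generator) for itself, and self-perpendicularity upgrades this to an injective cogenerator (resp.\ projective generator). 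This settles the structural hypotheses of Theorem~\ref{balance02} together with $\catw \perp \caty$ and $\catx \perp \catv$.

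The remaining hypotheses are the relative Ext vanishings $\pbext^{\geq 1}(\finrescatpbr, \catibdc(R)) = 0$ and $\ibdcext^{\geq 1}(\catpb(R), \fincorescatibdcr) = 0$. For the first, fix $M \in \finrescatpbr$ and $V \in \catibdc(R)$ and take a bounded $\catpb(R)$-resolution $W \to M$; this is proper by Lemma~\ref{xhat}\eqref{xhatitem01} (using $\catpb(R) \perp \catpb(R)$). By Lemma~\ref{gencat01}, the cross-perpendicularity $\catpb(R) \perp \catibdc(R)$ upgrades to $\finrescatpbr \perp \catibdc(R)$. Splicing $W^+$ into its short exact sequences $0 \to M_j \to W_j \to M_{j-1} \to 0$, whose terms all lie in $\finrescatpbr$, and applying Lemma~\ref{perp03}\eqref{perp03item1}, renders each sequence $\hom_R(-, V)$-exact; therefore $\hom_R(W^+, V)$ is exact, so $\pbext^n(M, V) = 0$ for $n \geq 1$. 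The dual argument handles $\ibdcext^{\geq 1}(\catpb(R), \fincorescatibdcr)$. Theorem~\ref{balance02} then yields the desired balance on $\finrescatpbr \times \fincorescatibdcr$.

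The main obstacle is bookkeeping rather than mathematics: the relative Ext vanishings require passing between ``proper'' and ``bounded'' resolutions and then concatenating the short-exact-sequence exactness through the augmented resolution. Because the three key perpendicularities are already in hand, no genuinely new ingredient beyond the cited facts is needed.
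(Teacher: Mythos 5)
Your specialization of Theorem~\ref{balance02} with $\catx=\catw=\catpb(R)$ and $\caty=\catv=\catibdc(R)$ is a legitimate way to reach the conclusion (it amounts to the paper's own citation of~\cite[(8.2.14)]{enochs:rha}; cf.\ Remark~\ref{balance06}), and your verification of the structural hypotheses via Fact~\ref{ICPG}, Example~\ref{more01}, and Lemma~\ref{tak01} is correct. The gap is in the Ext-vanishing hypotheses. You assert that Lemma~\ref{gencat01} upgrades $\catpb(R)\perp\catibdc(R)$ to $\finrescatpbr\perp\catibdc(R)$, but that lemma does not say this. Applied with $\catx=\catpb(R)$, $\caty=\catibdc(R)$, its conclusions are that $\catpb(R)$ is perpendicular to modules of finite $\catibdc(R)$-\emph{resolution} dimension, and that modules of finite $\catpb(R)$-\emph{coresolution} dimension are perpendicular to $\catibdc(R)$: finiteness moves to the second variable only through resolutions and to the first variable only through coresolutions, never the combination you need. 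The dimension shift behind Lemma~\ref{gencat01} genuinely runs only in those two directions; in your direction, from the syzygy sequence $0\to M_j\to W_j\to M_{j-1}\to 0$ with $W_j\perp V$, the long exact sequence exhibits $\aext^1(M_{j-1},V)$ as the cokernel of $\ahom(W_j,V)\to\ahom(M_j,V)$, and concluding this vanishes requires already knowing the sequence is $\ahom(-,V)$-exact --- exactly what the splicing step is trying to establish, so the argument is circular. The dual claim $\catpb(R)\perp\fincorescatibdcr$ fails for the same reason.

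The desired perpendicularities are true, but this is precisely where the Auslander/Bass class machinery enters and cannot be sidestepped. One must show $\finrescatpbr\subseteq\catabdc(R)$ using the two-out-of-three property of Fact~\ref{projac} together with $B\in\catabdc(R)$ from Example~\ref{more01} (this is Lemma~\ref{more02}), and then deduce $\catabdc(R)\perp\catibdc(R)$ from the adjunction $\aext^i(M,\hom_R(\bdc,I))\cong\hom_R(\tor^R_i(M,\bdc),I)$ for $I$ injective, as in the proof of Lemma~\ref{tak01}, combined with the Tor-vanishing built into the Auslander class. The paper packages all of this by citing~\cite[(4.1)]{sather:crct}, which in fact delivers more: the comparison maps $\pbext^n(M,N)\to\aext^n(M,N)\leftarrow\ibdcext^n(M,N)$ of Definition/Notation~\ref{rel02a} are isomorphisms, and the relative-Ext vanishing you need drops out. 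Your framework is salvageable once the appeal to Lemma~\ref{gencat01} is replaced by this Auslander-class argument.
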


\begin{proof}
Let $M$ and $N$ be $R$-modules with
$\pbpd_R(M)<\infty$ and $\ibdcid_R(N)<\infty$.
From Lemma~\ref{more02} we conclude $M,N\in\catbb(R)\cap\catabdc(R)$
and so~\cite[(4.1)]{sather:crct} implies that the following
natural maps are isomorphisms for each $n\in\zz$
$$\pbext^n(M,N)\xra[\cong]{\pccomp^n(M,N)}\ext^n_R(M,N)
\xla[\cong]{\ibdccomp^n(M,N)}\ibdcext^n(M,N).$$
In particular, we have
$$\pbext^n(\finrescatpbr,\catibdc(R))=0=
\ibdcext^n(\catpb(R),\fincorescatibdcr)$$
and the desired conclusion follows from~\cite[(8.2.14)]{enochs:rha}.
\end{proof}

Theorem~\ref{balance02} and Lemma~\ref{more03} yield
the next result. 

\begin{cor} \label{more04}
Let $R$ be a commutative ring and let $B$ and $C$ be semidualizing
$R$-modules such that $C\in\catbb(R)$. Set $\bdc=\hom_R(B,C)$
and assume $\catpb(R)\perp\catgibdc(R)$ and $\catgpb(R)\perp\catibdc(R)$.
Then 
$\gpbext$ and $\gibdcext$
are balanced on $\finrescatgpbr\times\fincorescatgibdcr$.
\qed
\end{cor}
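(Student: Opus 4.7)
The plan is to apply Theorem~\ref{balance02} directly with the quadruple $(\catx,\caty,\catw,\catv)=(\catgpb(R),\catgibdc(R),\catpb(R),\catibdc(R))$, drawing the two vanishing hypotheses of that theorem from the work already done in Lemma~\ref{more03}.

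First I would dispense with the structural hypotheses. Closure of $\catgpb(R)$ and $\catgibdc(R)$ under extensions is standard from the definitions of complete $\catp\catpb$- and $\catibdc\cati$-(co)resolutions, exactly as in~\cite[(2.5),(2.6)]{holm:ghd} for the case $C=R$; the argument carries over verbatim for an arbitrary semidualizing module. Fact~\ref{ICPG}, applied to the semidualizing module $B$, shows that $\catpb(R)$ is an injective cogenerator for $\catgpb(R)$, and the dual clause of Fact~\ref{ICPG}, applied to the semidualizing module $\bdc$, shows that $\catibdc(R)$ is a projective generator for $\catgibdc(R)$. The two orthogonality conditions $\catw\perp\caty$ and $\catx\perp\catv$ translate to $\catpb(R)\perp\catgibdc(R)$ and $\catgpb(R)\perp\catibdc(R)$, which are precisely the two standing assumptions of the corollary.

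The remaining step is to establish the vanishings
$$\waext^{\geq1}(\finrescatw,\catv)=0=\avext^{\geq1}(\catw,\fincorescatv),$$
which in our notation read $\pbext^{\geq1}(\finrescatpbr,\catibdc(R))=0$ and $\ibdcext^{\geq1}(\catpb(R),\fincorescatibdcr)=0$. These are exactly the intermediate identities produced inside the proof of Lemma~\ref{more03} from the isomorphisms $\pbext^n\cong\ext_R^n\cong\ibdcext^n$ on objects of $\catbb(R)\cap\catabdc(R)$ supplied by~\cite[(4.1)]{sather:crct}. With these in hand, Theorem~\ref{balance02} immediately yields the desired balance of $\gpbext$ and $\gibdcext$ on $\finrescatgpbr\times\fincorescatgibdcr$.

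I do not anticipate a genuine obstacle: every input is either a standing hypothesis of the corollary, recorded in Fact~\ref{ICPG}, or extracted from the proof of Lemma~\ref{more03}. The only care required is bookkeeping to keep track of which semidualizing module governs which side—$B$ on the projective/Gorenstein-projective side and $\bdc$ on the injective/Gorenstein-injective side—so that the two applications of Fact~\ref{ICPG} are matched to their correct clauses and the two perpendicularity assumptions are read off in the correct direction.
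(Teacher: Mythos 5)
Your proposal matches the paper's argument exactly. The paper dispatches this corollary with the sentence ``Theorem~\ref{balance02} and Lemma~\ref{more03} yield the next result,'' and your reading of what that abbreviates is the right one: set $(\catx,\caty,\catw,\catv)=(\catgpb(R),\catgibdc(R),\catpb(R),\catibdc(R))$, verify the structural hypotheses of Theorem~\ref{balance02} via Fact~\ref{ICPG} and the closure-under-extensions results, read off $\catw\perp\caty$ and $\catx\perp\catv$ as the standing hypotheses of the corollary, and supply the two vanishing conditions from the identities $\pbext^{\geq1}(\finrescatpbr,\catibdc(R))=0=\ibdcext^{\geq1}(\catpb(R),\fincorescatibdcr)$ established inside the proof of Lemma~\ref{more03}.
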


\begin{question} \label{more04'}
Let $R$ be a commutative ring and let $B$ and $C$ be semidualizing
$R$-modules such that $C\in\catbb(R)$. With $\bdc=\hom_R(B,C)$,
must one have
$\catpb(R)\perp\catgibdc(R)$ and $\catgpb(R)\perp\catibdc(R)$?
\end{question}
 
If the answer to this question is ``yes'' then the assumptions
$\catpb(R)\perp\catgibdc(R)$ and $\catgpb(R)\perp\catibdc(R)$
can be removed from Corollary~\ref{more04}.
Next we discuss one case where this is known,
showing that~\cite[(5.7)]{sather:crct} is a special case of Corollary~\ref{more04}.

\begin{disc} \label{more07}
Let $R$ be a commutative Cohen-Macaulay ring with a dualizing module $D$.
Let $B$  be a  semidualizing
$R$-module.  The membership $D\in\catbb(R)$ is in~\cite[(4.4)]{christensen:scatac}.
The conditions
$\catpb(R)\perp\catgi_{B^{\dagger_D}}(R)$ and $\catgpb(R)\perp\cati_{B^{\dagger_D}}(R)$
follow from the containments
$\catgi_{B^{\dagger_D}}(R)\subseteq\catbb(R)$
and 
$\catgpb(R)\subseteq\cata_{B^{\dagger_D}}(R)$
in~\cite[(4.6)]{holm:smarghd}.
It follows that  $\gpcext$ and $\ext_{\catgi_{C^{\dagger_D}}}$ are balanced on 
$\finrescatgpc\times\operatorname{cores}\comp{\catgi_{C^{\dagger_D}}(R)}$.
\end{disc}

The following question is from
the folklore of this subject and is related to the composition question for ring homomorphisms
of finite G-dimension; see~\cite[(4.8)]{avramov:rhafgd}.
Remark~\ref{more05} addresses 
its relevance  to Corollary~\ref{more04} and Question~\ref{more04'}.

\begin{question} \label{more06}
Let $R$ be a commutative ring and let $B$ and $C$ be semidualizing
$R$-modules such that $C\in\catbb(R)$.  Must the following containments 
hold?
\begin{align*}
\catgpb(R)&\subseteq\catgpc(R)
&\catgib(R)&\subseteq\catgic(R) \\
\catac(R)&\subseteq\catab(R)
&\catbc(R)&\subseteq\catbb(R)
\end{align*}
\end{question}

\begin{disc} \label{more05}
Let $R$ be a commutative Cohen-Macaulay ring with a dualizing module $D$.
Let $B$ and $C$ be semidualizing
$R$-modules such that $C\in\catbb(R)$.
Arguing as in~\cite[(3.9)]{frankild:rrhffd}, one concludes
$B^{\dagger_D}\in\catbbdc(R)$ and
$B\in\catb_{B^{\dagger_C\dagger_D}}(R)$.
Assume that the answer to Question~\ref{more06} is ``yes''.
Then there are containments
\begin{align*}
\catgpb(R)&\subseteq\cata_{B^{\dagger_D}}(R)
\subseteq\cata_{B^{\dagger_C}}(R)
&\catgibdc(R)&\subseteq\catb_{B^{\dagger_C\dagger_D}}(R)
\subseteq\catbb(R)
\end{align*}
by~\cite[(4.6)]{holm:smarghd}.
One concludes 
$\catpb(R)\perp\catgibdc(R)$ and $\catgpb(R)\perp\catibdc(R)$
from the easily verified conditions
$\catpb(R)\perp\catbb(R)$ and $\catabdc(R)\perp\catibdc(R)$.

In particular, if the answer to Question~\ref{more06} is ``yes'',
then the same is true for Question~\ref{more04'} and 
the assumptions
$\catpb(R)\perp\catgibdc(R)$ and $\catgpb(R)\perp\catibdc(R)$
can be removed from Corollary~\ref{more04}.
\end{disc}



\begin{thebibliography}{10}

\bibitem{auslander:adgeteac}
M.~Auslander, \emph{Anneaux de {G}orenstein, et torsion en alg\`ebre
  commutative}, S\'eminaire d'Alg\`ebre Commutative dirig\'e par Pierre Samuel,
  vol. 1966/67, Secr\'etariat math\'ematique, Paris, 1967. \MR{37 \#1435}

\bibitem{auslander:smt}
M.~Auslander and M.~Bridger, \emph{Stable module theory}, Memoirs of the
  American Mathematical Society, No. 94, American Mathematical Society,
  Providence, R.I., 1969. \MR{42 \#4580}

\bibitem{auslander:htmcma}
M.~Auslander and R.-O. Buchweitz, \emph{The homological theory of maximal
  {C}ohen-{M}acaulay approximations}, M\'em. Soc. Math. France (N.S.) (1989),
  no.~38, 5--37, Colloque en l'honneur de Pierre Samuel (Orsay, 1987).
  \MR{1044344 (91h:13010)}

\bibitem{avramov:rhafgd}
L.~L. Avramov and H.-B. Foxby, \emph{Ring homomorphisms and finite {G}orenstein
  dimension}, Proc. London Math. Soc. (3) \textbf{75} (1997), no.~2, 241--270.
  \MR{98d:13014}

\bibitem{avramov:aratc}
L.~L. Avramov and A.~Martsinkovsky, \emph{Absolute, relative, and {T}ate
  cohomology of modules of finite {G}orenstein dimension}, Proc. London Math.
  Soc. (3) \textbf{85} (2002), 393--440. \MR{2003g:16009}

\bibitem{butler:cer}
M.~C.~R. Butler and G.~Horrocks, \emph{Classes of extensions and resolutions},
  Philos. Trans. Roy. Soc. London Ser. A \textbf{254} (1961/1962), 155--222.
  \MR{0188267 (32 \#5706)}

\bibitem{christensen:scatac}
L.~W. Christensen, \emph{Semi-dualizing complexes and their {A}uslander
  categories}, Trans. Amer. Math. Soc. \textbf{353} (2001), no.~5, 1839--1883.
  \MR{2002a:13017}

\bibitem{eilenberg:frha}
S.~Eilenberg and J.~C. Moore, \emph{Foundations of relative homological
  algebra}, Mem. Amer. Math. Soc. No. \textbf{55} (1965), 39. \MR{0178036 (31
  \#2294)}

\bibitem{enochs:gipm}
E.~E. Enochs and O.~M.~G. Jenda, \emph{Gorenstein injective and projective
  modules}, Math. Z. \textbf{220} (1995), no.~4, 611--633. \MR{1363858
  (97c:16011)}

\bibitem{enochs:rha}
\bysame, \emph{Relative homological algebra}, de Gruyter Expositions in
  Mathematics, vol.~30, Walter de Gruyter \& Co., Berlin, 2000. \MR{1753146
  (2001h:16013)}

\bibitem{foxby:gmarm}
H.-B. Foxby, \emph{Gorenstein modules and related modules}, Math. Scand.
  \textbf{31} (1972), 267--284 (1973). \MR{48 \#6094}

\bibitem{foxby:qpmcmr}
H.-B.\ Foxby, \emph{Quasi-perfect modules over {C}ohen-{M}acaulay rings}, Math.
  Nachr. \textbf{66} (1975), 103--110. \MR{0376663 (51 \#12838)}

\bibitem{frankild:rrhffd}
A.~Frankild and S.~Sather-Wagstaff, \emph{Reflexivity and ring homomorphisms of
  finite flat dimension}, Comm. Algebra \textbf{35} (2007), no.~2, 461--500.
  \MR{2294611}

\bibitem{frankild:sdcms}
\bysame, \emph{The set of semidualizing complexes is a nontrivial metric
  space}, J. Algebra \textbf{308} (2007), no.~1, 124--143. \MR{2290914}

\bibitem{golod:gdagpi}
E.~S. Golod, \emph{{$G$}-dimension and generalized perfect ideals}, Trudy Mat.
  Inst. Steklov. \textbf{165} (1984), 62--66, Algebraic geometry and its
  applications. \MR{85m:13011}

\bibitem{holm:gdf}
H.~Holm, \emph{Gorenstein derived functors}, Proc. Amer. Math. Soc.
  \textbf{132} (2004), no.~7, 1913--1923. \MR{2053961 (2004m:16009)}

\bibitem{holm:ghd}
\bysame, \emph{Gorenstein homological dimensions}, J. Pure Appl. Algebra
  \textbf{189} (2004), no.~1, 167--193. \MR{2038564 (2004k:16013)}

\bibitem{holm:smarghd}
H.~Holm and P.~J{\o}rgensen, \emph{Semi-dualizing modules and related
  {G}orenstein homological dimensions}, J. Pure Appl. Algebra \textbf{205}
  (2006), no.~2, 423--445. \MR{2203625}

\bibitem{holm:fear}
H.~Holm and D.~White, \emph{{Foxby equivalence over associative rings}}, J.
  Math. Kyoto Univ., to appear, \texttt{arXiv:math.AC/0611838}.

\bibitem{sather:divisor}
S.~Sather-Wagstaff, \emph{Semidualizing modules and the divisor class group},
  Illinois J. Math., to appear, \texttt{arXiv:math.AC/0408399}.

\bibitem{sather:crct}
S.~Sather-Wagstaff, T.~Sharif, and D.~White, \emph{Comparison of relative
  cohomology theories with respect to semidualizing modules}, preprint (2007),
  \texttt{arXiv:math.AC/0706.3635}.

\bibitem{sather:sgc}
\bysame, \emph{Stability of {G}orenstein categories}, preprint (2007),
  \texttt{arXiv:math.AC/0703644}.

\bibitem{takahashi:hasm}
R.~Takahashi and D.~White, \emph{Homological aspects of semidualizing modules},
  preprint (2007), \texttt{arXiv:math.AC/0703643}.

\bibitem{vasconcelos:dtmc}
W.~V. Vasconcelos, \emph{Divisor theory in module categories}, North-Holland
  Publishing Co., Amsterdam, 1974, North-Holland Mathematics Studies, No. 14,
  Notas de Matem\'atica No. 53. [Notes on Mathematics, No. 53]. \MR{0498530 (58
  \#16637)}

\bibitem{white:gpdrsm}
D.~White, \emph{Gorenstein projective dimension with respect to a semidualizing
  module}, preprint (2006), \texttt{arXiv:math.AC/0611711}.

\end{thebibliography}

\providecommand{\bysame}{\leavevmode\hbox to3em{\hrulefill}\thinspace}
\providecommand{\MR}{\relax\ifhmode\unskip\space\fi MR }
\providecommand{\MRhref}[2]{%
  \href{http://www.ams.org/mathscinet-getitem?mr=#1}{#2}
}
\providecommand{\href}[2]{#2}

\end{document}